		\def\thanks{}%
		\def\footnotemark{}%
\let\oldthanks\thanks
\renewcommand{\thanks}[1]{\texorpdfstring{\oldthanks{#1}}{}}
		\renewcommand{\clevethm@proofsectiontitle}{Omitted proofs of }
	\crefname{ALG@line}{step}{steps}
	\crefname{enumeratpropi}{property}{properties}
	\crefname{enumeratpropii}{property}{properties}
	\crefname{section}{Section}{Sections}
	\crefname{subsection}{Subsection}{Subsections}
\DeclareMathAlphabet{\mathcalligra}{T1}{calligra}{m}{n}
	\def\operator@font{\rm}
	\DeclareMathOperator{\BregmanD}{D}
	\newcommand{\BregmanKernel}{H}						% Function used for main Bregman kernel (set to lowercase h in the preliminaries)
	\newcommand{\D}{\@ifstar\@@D\@D}					% Bregman distance (*-variant and `_' follow same rule of \h, see below)
	\newcommand{\h}{\@ifstar{\@@h}{\@h}}				% Use \h for \BregmanKernel,
	\let\P\relax
	\newcommand{\E}{\@ifstar\@@E\@E}					% Conditional expectation \E[k]{arg}[conditional event] (use *-version for small brackets)
	\newcommand{\P}{\@ifstar\@@P\@P}					% Conditional probability (same as \E with no 3rd optional argument)
        \newcommand{\@h}{\@ifnextchar_{h}{\BregmanKernel}}
    \newcommand{\@@h}{\@ifnextchar_{\hat h}{\hat\BregmanKernel}}
	\newcommand{\@D}{\@ifnextchar_{\@D@sub}{\BregmanD_{\@h}}}
	\newcommand{\@@D}{\@ifnextchar_{\@@D@sub}{\@@Dnosub}}
	\def\@D@sub_#1{\BregmanD_{\@h_{#1}}}
	\def\@@D@sub_#1{\BregmanD_{\@@h_{#1}}}
	\newcommand{\@@Dnosub}{\@ifstar{\@@D@@}{\BregmanD_{\@@h}}}
	\newcommand{\@@D@@}{\@ifnextchar_{\@@D@@sub@}{\BregmanD_{\@@h@}}}
    \newcommand{\@@h@}{\@ifnextchar_{\conj{\hat h}}{\conj{\hat\BregmanKernel}}}
	\def\@@D@@sub@_#1{\BregmanD_{\@@h@_{#1}}}
	\newcommand{\@P}[2][k]{\mathcal P_{#1}\ifstrempty{#2}{}{\left[#2\right]}}
	\newcommand{\@@P}[2][k]{\mathcal P_{#1}\ifstrempty{#2}{}{[#2]}}
	\newcommandx{\@E}[3][1=k,3={}]{
		\mathbb E_{#1}\ifstrempty{#2}{}{
			\left[
				#2\ifstrempty{#3}{}{\vphantom{#3}\right|\left.#3}
			\right]
		}
	}
	\newcommandx{\@@E}[3][1=k,3={}]{
		\mathbb E_{#1}\ifstrempty{#2}{}{
			[#2\ifstrempty{#3}{}{|#3}]
		}
	}
	\pgfplotsset{compat=1.16}
	\newcommand{\TheTitle}{Tight
Convergence Rates of the Gradient Method on Smooth
Hypoconvex Functions}
	\newcommand{\TheShortTitle}{Convergence rate of the gradient method on smooth hypoconvex functions}
	\newcommand{\TheFunding}{%
		This research was supported by a grant from the Global PhD partnership between KU Leuven and UCLouvain, and the research projects G081222N, G0A0920N, G086518N, and G086318N; Research Council KU Leuven C1 project No. C14/18/068; Fonds de la Recherche Scientifique -- FNRS and the Fonds Wetenschappelijk Onderzoek -- Vlaanderen under EOS project no 30468160 (SeLMA); European Union's Horizon 2020 research and innovation programme under the Marie Skłodowska-Curie grant agreement No. 953348.
	}
	\newcommand{\TheKeywords}{%
		Performance estimation, 
		Gradient method, 
		Hypoconvex functions, 
		Convergence rates%
	}  
	\newcommand{\TheAbstract}{
	    We perform the first tight convergence analysis of the gradient method with varying step sizes when applied to smooth hypoconvex (weakly convex) functions. Hypoconvex functions are smooth nonconvex functions whose curvature is bounded and assumed to belong to the interval $[\mu, L]$, with $\mu<0$. Our convergence rates improve and extend the existing analysis for smooth nonconvex functions with $L$-Lipschitz gradient (which corresponds to the case $\mu=-L$), and smoothly interpolates between that class and the class of smooth convex functions. We obtain our results using the performance estimation framework adapted to hypoconvex functions, for which new interpolation conditions are derived. We derive explicit upper bounds on the minimum gradient norm of the iterates for a large range of step sizes, explain why all such rates share a common structure, and prove that these rates are tight when step sizes are smaller or equal to $1/L$. Finally, we identify the optimal constant step size that minimizes the worst-case of the gradient method applied to hypoconvex functions.
	}
	\title[\TheShortTitle]{\LARGE\normalfont\scshape\TheTitle}
	\author[T. Rotaru]{Teodor Rotaru\textsuperscript{1,2}}
		\address{\textsuperscript{1}\textnormal\TheAddressKU}
		\email{teodor.rotaru@kuleuven.be}
	\author[F. Glineur]{Fran\c cois Glineur \textsuperscript{2}}
	    \address{\textsuperscript{2}\textnormal\TheAddressUCLN}
		\email{francois.glineur@uclouvain.be}
	\author[P. Patrinos]{Panagiotis Patrinos\textsuperscript{1}}
		\email{panos.patrinos@esat.kuleuven.be}
	\thanks{\TheFunding}
	\keywords{\TheKeywords}
\theoremstyle{plain}
\newtheorem{theorem}{Theorem}[section]
\newtheorem{proposition}[theorem]{Proposition}
\newtheorem{lemma}[theorem]{Lemma}
\theoremstyle{definition}
\newtheorem{definition}[theorem]{Definition}
\newtheorem{conjecture}[theorem]{Conjecture}
\theoremstyle{remark}
\begin{document}
	
	\ifsiam
		\maketitle
	\fi
	\begin{abstract}\TheAbstract\end{abstract}
	
	\ifams
		\maketitle
	\else
		\begin{keywords}\TheKeywords\end{keywords}
% 		\begin{AMS}\TheSubjclass\end{AMS}
	\fi

	\section{Introduction}\label{sec:Introduction}
		%!TEX root = ../../ms.tex
%
%sorry no time for \todo[inline]{TR: In general, replace the extensive use of "We"}

% \section{Introduction}\label{sec:Introduction}
The problem of identifying tight convergence rates for optimization methods on different classes of functions is receiving increasing attention. While most of the existing work targets convex functions, non convex functions are very frequently encountered in the large-scale optimization problems that need to be solved in machine learning. %or deep learning 
%not as much is known about the guarantees in the nonconvex context. However, the latter is addressed with a higher interest in the recent years due to . 
In this work we provide such insights about the class of smooth hypoconvex functions, a specific subset of nonconvex functions. %This class can be studied using its key property of curvature subtraction from a convex function. 
We consider the canonical first-order optimization algorithm, i.e., the gradient method with varying step sizes, and derive tight convergence results for a large range of step sizes.

%\todo[inline]{TR: Write about about gradient method \& hypoconvex functions in Machine Learning.}

\subsection{Tight convergence analysis and performance estimation}\label{subsec:Tight_conv_literature}
When solving an optimization problem through an iterative method, one is often interested in the evolution of some convergence measure after applying a given number of steps. Drori and Teboulle tackle this problem in an novel way in \cite{drori_performance_2014}: they introduce a tool for the worst-case analysis of first-order methods -- the Performance Estimation Problem (PEP). A PEP consists in modeling the problem of finding the \textit{worst} behavior of a given algorithm on a given problem class as an optimization problem. In many cases this problem can be cast as a convex semidefinite programming problem, hence is efficiently solvable.

Intuitively, some convergence measure that characterizes how far the iterates are from the solution after performing $N$ steps of the optimization method $\mathcal{M}$ is maximized with respect to a class of functions $\mathcal{F}$. A condition on the initial iterate that ensure boundedness of the problem is usually also required, such as a bound on the distance to a solution, $\|x_0 - x_*\|$. %, as well as a characterization of that optimal point $x_*$. % REMOVE since x* is a property of f, not needed
A general form of the PEP problem is the following optimization program:
{\normalsize
\begin{align}\label{eq:PEP_general_abstract}
    \begin{aligned}
        \maximize_{f\,,\, x_0} \quad & \mathcal{P}_{\mathcal{M},N} (f,x_0) \\
        \stt \quad & f \in \mathcal{F}  \\
        &\text{Initial conditions on } x_0 
        %\\        &\text{Optimality conditions on } x_*
    \end{aligned}
\end{align}
}%
where $\mathcal{P}_{\mathcal{M},N}(f,x_0)$ is the performance measure of method $\mathcal{M}$ after performing $N$ steps on function $f$ starting from $x_0$. Such performance measure can be for example the accuracy of the last iterate $f(x_N)-f(x_*)$. 
Decision variables are the function $f$ belonging to  a generic class of functions $\mathcal{F}$, and the initial iterate $x_0$.

Problem \eqref{eq:PEP_general_abstract} is infinite dimensional because the maximization is over a class of functions. Therefore, it has to be reformulated as a finite dimensional problem. A rigorous discretization is done by using necessary and sufficient interpolating conditions of functions from $\mathcal{F}$. These conditions were first established in the context of PEP by Taylor et. al., in \cite{taylor_smooth_2017}.

Worst-case convergence rates of first-order methods applied to smooth convex functions have been extensively analyzed with PEP. For instance, the gradient method applied to convex functions with step sizes shorter than $\tfrac{1}{L}$, where $L$ is the Lipschitz constant of the gradient, was studied in \cite{drori_performance_2014}, while the strongly-convex functions were analyzed in \cite{taylor_smooth_2017} for fixed step sizes and in \cite{deKlerk_Taylor_line_search_2017} when exact line-search is performed. Optimized first-order methods can also be obtained within the PEP framework, see for example \cite{Kim2021_PEP_Cvx} for an optimal algorithm to decrease the gradient norm of smooth convex functions.

\subsection{Previous analysis of the gradient method on smooth nonconvex functions}\label{subsec:Nonconvex_literature}
Exact convergence rates for the larger class of smooth nonconvex functions have only started to be studied relatively recently. %A central reason is that 
First, as one cannot guarantee the convergence of first-order algorithms to a global minimizer, we note that such convergence analysis is made with respect to finding a stationary point. %In particular, this is not sufficient to be a local minimum. 
Hence the performance of algorithms for smooth nonconvex functions is measured in terms of the minimum gradient norm among all iterates (the minimum being needed since the gradient norm may not be monotonically decreasing).

%\todo[inline]{TR: Emphasize that we replace the relaxed analysis of $[-L,L]$ to the arbitrary curvature $\mu \in (\infty, 0]$.}

Within the PEP framework for tight analysis, the gradient method for smooth  nonconvex functions was studied by Taylor in \cite[page 190]{PhD_AT_2017} for the particular constant step size equal to $\tfrac{1}{L}$. The result was extended by Drori and Shamir in \cite[Corollary 1]{drori2021complexity} for step sizes shorter than $\tfrac{1}{L}$ and then improved by Abbaszadehpeivasti et. al., in \cite{abbaszadehpeivasti2021GM_smooth} for larger step sizes up to $\tfrac{\sqrt{3}}{L}$. All the above results were obtained for the class of smooth nonconvex functions, i.e., smooth functions whose gradient is assumed to be $L$-Lipschitz or, equivalently, whose curvature is assumed to belong to the interval $[-L, L]$ (recall that when a function is $\mathcal{C}^2$, its curvature lies between the minimum and maximum eigenvalues of the Hessian). 

In this work, we generalize this setting and consider smooth nonconvex functions with bounded curvature. More precisely we assume curvature belongs to the $[\mu,L]$ interval with  $\mu<0$, and call these functions hypoconvex. Our convergence rates improve on the existing analysis for smooth nonconvex functions with $L$-Lipschitz gradient when $-L<\mu<0$ (the extra information about the lower curvature allowing tighter bounds than on $[-L,L]$), and extend it when $\mu<-L$. They smoothly interpolate between the classes of smooth nonconvex functions and smooth convex functions with $L$-Lipschitz gradients. %We obtain our results using the performance estimation framework adapted to hypoconvex functions, for which new interpolation conditions are derived. We derive explicit upper bounds on the minimum gradient

%In this paper, we make a further step and study the class of hypoconvex functions of arbitrary lower curvature $\mu < 0$, to extend and replace the previous analysis on the particular case $\mu=-L$.
% In this paper, we make a further extension to embed hypoconvex functions of arbitrary lower curvature $\mu \in (-\infty, 0)$, not only the particular cases analysed until now, i.e., $\mu=-L$.
%We closely follow the PEP formulation from \cite{abbaszadehpeivasti2021GM_smooth}.

\subsection{Contributions}\label{subsec:Contributions}
%\todo[inline]{TR: Write the contributions in words, avoid formulas.}
%\todo[inline]{TR: Possibly a table of results with references to the theorems / propositions / examples in the text.}
% The study is performed under the following Assumptions:
% \begin{assumption} %\label{ass:func_mu_L_f_*}
%     The functions we analyze are smooth, with a curvature belonging to the interval $\big[\mu, L\big]$, with $L>0$ and $\mu \leq 0$, and have a global minimum.
%     \todo[inline]{It may have a global minimum}
% \end{assumption}
% \begin{assumption}%\label{ass:fixed_steps}
%     We analyze the iterations of the gradient method with fixed but variable step sizes $\tfrac{h_i}{L}$, where $i$ denotes the iteration number and $h_i$ belongs to $(0,2)$.
% \end{assumption}
% \begin{assumption}
%     The gradient method converges to a stationary point.
% \end{assumption}
We prove the following results using the technique of performance estimation, relying on Theorem \ref{thm:interp_hypo_characterization_min} which introduces the interpolation conditions for smooth hypoconvex functions, together with a characterization of their global minimum:
:%, the  are proved:
\begin{enumerate}[wide, labelindent=0pt]
    \item[(a)] The \textbf{first upper bounds} on the convergence rate of the gradient method applied to smooth hypoconvex functions for varying step sizes, valid as long as all step sizes stay below some threshold $\tfrac{\bar{h}}{L} \in \big[\tfrac{3}{2L}, \tfrac{2}{L}\big)$ (the expression for $\bar{h} $ depends on the ratio between $\mu$ and $L$). 
    This rate depends on both upper and lower curvatures and continuously interpolates between the results for convex ($\mu=0$) and smooth nonconvex ($\mu=-L$) functions. (Theorem \ref{thm:wc_GM_hypo})
    \item[(b)] The \textbf{tightness} of the above upper bound when all step sizes are below $\tfrac{1}{L}$.  
    %For the complementary range $\big[\tfrac{1}{L},\tfrac{\bar{h}}{L}\big]$, the tightness is numerically proved. 
    (Section \ref{subsec:tightness_gamma_lg_1})
    \item[(c)] A \textbf{tight bound} on the convergence rate of the  gradient method when applied to smooth convex functions $(\mu=0)$ for all step sizes below $\tfrac{3}{2L}$. (Proposition \ref{prop:cvx_rate})
\end{enumerate}
We also explain why all the above rates, which are sublinear, share a common structure, namely they are of the form $\tfrac{C}{q + p N}$ (Theorem \ref{thm:meta_thm_one_step_p_zero_step_q}). 
From these findings, we also deduce:
\begin{enumerate}[wide, labelindent=0pt]
    \item[(d)] The  \textbf{optimal constant step size} recommendation with respect to both $\mu$ and $L$ such that the worst-case upper bound is minimized. Such a recommendation based on both curvatures for hypoconvex functions was not known before. (Proposition \ref{prop:gamma_star})
    \item[(e)] In the case of constant step size $\tfrac{h}{L}$, the \textbf{existence of three worst-case regimes}, namely corresponding to a constant step size belonging to $(0,\tfrac{1}{L}]$, $[\tfrac{1}{L},\tfrac{\bar{h}}{L}]$ and $[\tfrac{\bar{h}}{L},\tfrac{2}{L})$. % depending with respect to intervals of a constant step size.
    %$h$: (i) $h \in \big(0,1\big]$, (ii) $h \in \big(1, \bar{h}\big]$ and (iii) $h \in \big(\bar{h}, 2\big)$.
    %\big(\tfrac{\mu}{L}\big)
\end{enumerate}
Moreover, following extensive numerical simulations, we conjecture:
\begin{enumerate}[wide, labelindent=0pt]
    \item[(f)] An explicit analytical expression for the \textbf{tight upper bound} on the convergence rate of the gradient method on convex functions with constant step size larger then $\tfrac{3}{2L}$. (Conjecture \ref{conjecture:cvx_large_steps})
    \item[(g)] A partially explicit analytical expression for the \textbf{tight upper bound} corresponding to the above third regime ($\tfrac{h}{L}\in [\tfrac{\bar{h}}{L},\tfrac{2}{L})$)
    (Conjecture \ref{conjecture:3rd_regime})
\end{enumerate}

\paragraph{Limitations.}\label{par:Limitations} Our work focuses on worst-case convergence results, which may in some cases be pessimistic and underestimate actual performance of optimization methods on a given problem. Moreover our rates depend on the knowledge of the curvature constants $L$ and $\mu$, which can sometimes be difficult to estimate, especially the latter. %Finally our  optimal step size relies on unknown constant r

\subsection*{Structure of the paper}
Section \ref{sec:Defs_Setup} introduces notations and definitions. Section \ref{sec:PEP_formulation} is dedicated to the formulation of the performance estimation problem for the gradient method applied to hypoconvex functions.  Section \ref{sec:Results_wc_analysis} presents results about the worst-case convergence rates and, as a direct application, the optimal constant step size minimizing the rate is recommended.

	\section{Definitions and setup}\label{sec:Defs_Setup}
		%!TEX root = ../../ms.tex
% \section{Definitions and setup}\label{sec:Defs_Setup}

Consider the unconstrained optimization problem $\minimize_{x \in \mathbb{R}^d} f(x)$, where $f:\mathbb{R}^d\rightarrow \mathbb{R}$.
% \todo[inline]{TR: Are we still looking at a minimization problem?}
\paragraph{Assumption 1.} The function $f$ is smooth and hypoconvex, i.e., its curvature belongs to the interval $\big[\mu, L\big]$, with $L>0$ and $\mu \leq 0$ (see below for precise definitions). %We assume that $f$ has at least one stationary point and it might have a global minimum. 
The step sizes for the iterations are fixed, i.e., decided before the algorithm is run, but not necessarily constant. We denote the step at iteration $i$ by $\tfrac{h_i}{L}$ (i.e., we normalize using the upper curvature $L$).

%Implicitly, we refer to $h_i$ as being the normalized step size and understand $\tfrac{h_i}{L}$.
%\todo[inline]{TR: Better formulate the idea of referring to $h_i$ as step size, not $\frac{h_i}{L}$.}

\subsection{Gradient method and smooth nonconvex functions} \label{subsec:Gradient_Method}
Assume one computes $N$ steps of the gradient method generating the iterations $x_i$ starting from the initial point $x_0$, with $g_i:=\nabla f(x_i)$ the gradient of $f$ with respect to $x_i$,
\begin{align}\label{eq:GM_it_fixed_steps}
    x_{i+1} = x_i - \tfrac{h_i}{L} g_i, \quad \forall i = 0,\dots,N-1.
\end{align}
We denote by $x_*$ and $f_*:=f(x_*)$ any (global) optimal solution and its value (we also have $g_*:=\nabla f(x_*)=0$). %An optimal solution, the global minimum value and the optimal gradient are An index set is denoted by . 
For all $i\in\mathcal{I}$ (where $\mathcal{I}$ is some index set) we assume we have access to a first-order oracle that provides the triplets $\mathcal{T}:=\big\{(x_i,g_i,f_i)\big\}_{i \in \mathcal{I}} \subseteq \mathbb{R}^d\times\mathbb{R}^d\times\mathbb{R}$, i.e., the iterates, their  gradients and the corresponding function values.
%
%For all starting points $x_0$, it is assumed there exists $\Delta>0$ such that $f_0-f_N {} \leq {} \Delta$.
%
\paragraph{Convergence} In this work we use the minimum gradient norm of the iterates as our target performance measure, indicating how close we are to a stationary point. It is known that for convex functions the gradient norm decreases after each iteration of the gradient method (provided the step size $h_i$ belongs to $(0,2)$); the proof is given in Appendix \ref{proof:decreasing_gradient}. However this no longer holds for nonconvex or hypoconvex functions, hence we need to adapt our performance measure and use the \emph{minimum gradient norm} among all performed iterations, namely $ \min_{0 {} \leq {} i {} \leq {} N} \big\{\|\nabla f(x_i)\|^2\big\} $.

% \todo[inline]{TR: Reference to the Appendix to prove the decreasing of gradient norm!}
The state-of-the art rates for $\mu = -L$ %\eqref{eq:GM_nonconvex_smooth}
(i.e., the smooth nonconvex case with $L$-Lipschitz gradient, \cite[Theorem 2]{abbaszadehpeivasti2021GM_smooth} rewritten in an equivalent form) is
{\normalsize
\begin{align} \label{eq:GM_nonconvex_smooth}
    \min_{0 {} \leq {} i {} \leq {} N} \big\{\|\nabla f(x_i)\|^2\big\} {} \leq {} 
        \frac{2L\,\big[f(x_0)-f_*\big]}{1 + 
        \sum\limits_{i=0}^{N-1} \left[2 h_i - \frac{h_i^2}{2} 
        \max(1, h_i)
        \right]}
\end{align}
}%
while for $\mu \rightarrow -\infty$ %\eqref{eq:Nesterov_rate} 
(i.e., when we have no bound on the lower curvature, \cite[Section 1.2.3]{Nesterov_cvx_lectures}) is
{\normalsize
\begin{align} \label{eq:Nesterov_rate}
    \min_{0 {} \leq {} i {} \leq {} N} \big\{\|\nabla f(x_i)\|^2\big\} \leq {} 
        \frac{2L\,\big[f(x_0)-f_*\big]}{1 + 
        \sum\limits_{i=0}^{N-1} \left(2 h_i - h_i^2 
        \right)} \, , \, \forall h_i \in (0,2).
\end{align}
}%
%
% \todo[inline]{TR: Use notation to include both cases with / without optimal point.}
%
%
% Let $\Delta>0$; then for all starting points $x_0$ it is assumed that
% \begin{align}\label{eq:Init_cond}
%     f(x_0)-f_* {} \leq {} \Delta
% \end{align}
%
% \paragraph{\textbf{Black-box model}} 
\subsection{Smooth hypoconvex functions}\label{subsec:Hypoconvex_func}
% \todo[inline]{TR: Maximum / minimum curvature? Maybe use an alternative definition.}
The lower and upper curvatures of a function are defined as following.
\begin{definition}\label{def:upper_lower_curvature} 
Let $L>0$ and $\mu > -\infty$. A function $f:\mathbb{R}^d \rightarrow \mathbb{R}$ has an upper curvature $L$ if and only if the function $g:\mathbb{R}^d \rightarrow \mathbb{R}$, $g := \tfrac{L}{2} \|\cdot\|^2 - f$, is convex. Similarly, $f$ has a lower curvature $\mu$ if and only if the function $g:\mathbb{R}^d \rightarrow \mathbb{R}$, $g := f - \tfrac{\mu}{2} \|\cdot\|^2$, is convex.
\end{definition}
%
% \begin{definition}\label{def:upper_curvature} 
% Let $L>0$. A function $f:\mathbb{R}^d \rightarrow \mathbb{R}$ has a maximum curvature $L$ if and only if the function $g:\mathbb{R}^d \rightarrow \mathbb{R}$, $g := \tfrac{L}{2} \|\cdot\|^2 - f$, is convex.
% \end{definition}
% %
% \begin{definition}\label{def:lower_curvature} 
% Let $\mu > -\infty$. A function $f:\mathbb{R}^d \rightarrow \mathbb{R}$ has a minimum curvature $\mu$ if and only if the function $g:\mathbb{R}^d \rightarrow \mathbb{R}$, $g := f - \tfrac{\mu}{2} \|\cdot\|^2$, is convex.
% \end{definition}
\begin{definition}\label{eq:Def_class_F} Let $L>0$ and $\mu > -\infty$ such that $\mu \le L$. We denote by $\mathcal{F}_{\mu, L}(\mathbb{R}^d)$ the class of $d$-dimensional smooth functions whose curvature belongs to the interval $\big[\mu, L\big]$.%, where $L>0$ and $\mu \in \big(-\infty, L\big]$.
\end{definition}
\begin{definition}[\cite{taylor_smooth_2017}, Definition 2] A set of triplets $\mathcal{T}:=\big\{\left(x_i, g_i, f_i\right)\big\}_{i \in \mathcal{I}}$, with $x_i, g_i\in \mathbb{R}^d$, $f_i \in \mathbb{R}$, is called $\mathcal{F}_{\mu, L}$-interpolable if and only if there exists a function $f \in \mathcal{F}_{\mu, L}(\mathbb{R}^d)$ such that $\nabla f(x_i)=g_i$ and $f(x_i)=f_i$ for all $i\in \mathcal{I}$.
\end{definition}
Depending on the sign of $\mu$, $f$ is (i) hypoconvex for $\mu<0$, (ii) convex for $\mu=0$ or (iii) strongly-convex for $\mu>0$. The analysis in this paper is restricted to $\mu {} \leq {} 0$. To simplify the notation, we use $\mathcal{F}_{\mu, L}$ and implicitly consider $d$-dimensional real functions.

%\todo[inline]{TR: Insert rates? (for $\mu=-L$ and Nesterov's?)}

    \section{Performance estimation of the gradient method on hypoconvex functions}\label{sec:PEP_formulation}
        %!TEX root = ../../ms.tex
% \section{Performance estimation of the gradient method on hypoconvex functions}\label{sec:PEP_formulation}
%Since we deal with a hypoconvex function, the convergence analysis aims for stationary points in the worst-case scenario of applying $N$ steps of the gradient method. To this end, 
Problem \eqref{eq:PEP_general_inf_dim} below instantiates the general concept of a PEP \eqref{eq:PEP_general_abstract}, using the minimum gradient norm over the iterations as a \textit{performance measure} and the difference between function values as an \textit{initial condition}. Given curvatures $\mu$ and $L$, a bound $\Delta$ characterizing the starting point, and a number of iterations $N$, we solve
% \todo[inline]{Divide the constraints on two columns}
\begin{align}\label{eq:PEP_general_inf_dim}
    \begin{aligned}
        \maximize_{f,\,x_0} \quad & \min_{0 \leq i \leq N} \big\{\left\|g_{i}\right\|^2\big\} \\
        \stt \quad & f \in \mathcal{F}_{\mu,L}
        ,\quad f_0-f_* \leq \Delta \qquad\text{ (or, in some cases, $f_{0}-f_{N} \leq \Delta$)}
        % ,\quad f(x) \geq f_* \, \forall x \in \mathbb{R}^d
        \\
        & x_{i+1}=x_{i}-\tfrac{h_{i}}{L} g_{i}, \quad i \in\{0, \ldots, N-1\}
    \end{aligned}
\end{align}
The decision variables of problem \eqref{eq:PEP_general_inf_dim} are function $f$ and starting iterate $x_0$. % while $\Delta$ is a positive constant.
Because the maximum is taken over the entire class of functions $f \in \mathcal{F}_{\mu,L}$, the optimization problem is infinite-dimensional, with an infinite number of constraints (these constraints expressing that function $f$ is hypoconvex). 
To solve it, we relax it and show that the relaxed formulation leads to the same optimal solution. This step is usual in the PEP methodology and is done by restricting the functions to the iterations and use specific interpolation conditions. For smooth hypoconvex functions, these conditions are given in Theorem \ref{thm:interp_hypo_characterization_min}. The PEP solving technique is described in Appendix \ref{appendix:Solve_PEP}. 
%
% Theorem \ref{thm:interp_hypo_characterization_min} provides necessary and sufficient interpolation conditions of smooth hypoconvex functions $f \in \mathcal{F}_{\mu,L}$ and a characterization of the global minimum $f_*$, if it exists.
\begin{theorem} \label{thm:interp_hypo_characterization_min}
Let $\mathcal{T}=\big\{\left(x_i, g_i, f_i\right)\big\}_{i \in \mathcal{I}}$ , $L>0$ and $\mu \in \big(-\infty, L\big]$. Set $\mathcal{T}$ is $\mathcal{F}_{\mu,L}$-interpolable if and only if for every pair of indices $(i,j)$, with $i,j \in \mathcal{I}$ we have
{\normalsize
\begin{align}\label{eq:Interp_hypoconvex}
\hspace{-0.2in}
\begin{aligned}
    f_{i}-f_{j}-\big\langle g_{j}, x_{i}-x_{j}\big \rangle 
    \geq 
    \tfrac{1}{2\big(1-\tfrac{\mu}{L}\big)}\Big(\tfrac{1}{L}\big\|g_{i}-g_{j}\big\|^{2} + 
    % \right. \\ & \left. \qquad \qquad
    \mu\big\|x_{i}-x_{j}\big\|^2-
    2 \tfrac{\mu}{L}\big\langle g_{j}-g_{i}, x_{j}-x_{i}\big\rangle\Big)
\end{aligned}    
\end{align}
}%
Additionally, %if we assume the existence of a global minimum $f_*$, 
there exists among the set of all interpolating functions one function whose global minimum is finite and is characterized by
\begin{align}\label{eq:Charact_f*}
  f_* = \min_{x\in\mathbb{R}^d} f(x) = 
  \min_{i\in \mathcal{I}} \big\{ f_i - \tfrac{1}{2L} \|g_i\|^2  \big\}
\end{align}
Let 
$
i_* \in \arg \min_{i\in\mathcal{I}} \big\{f_i - \tfrac{1}{2L}\|g_i\|^2\big\};
$
then a global minimizer is given by 
$x_*=x_{i_*} - \tfrac{1}{L}g_{i_*}$.
\end{theorem}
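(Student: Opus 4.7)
The plan is to prove the theorem in two stages: first the interpolation conditions \eqref{eq:Interp_hypoconvex}, then the minimum characterization \eqref{eq:Charact_f*}.

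For the interpolation conditions, I would reduce the hypoconvex case to the well-studied smooth convex case by a quadratic shift. Observe that $f \in \mathcal{F}_{\mu,L}$ if and only if the shifted function $\tilde f := f - \tfrac{\mu}{2}\|\cdot\|^2$ belongs to $\mathcal{F}_{0,L-\mu}$, the class of $(L-\mu)$-smooth convex functions; this reduction remains valid for $\mu \le 0$ since $L-\mu>0$. The triplets transform accordingly as $\tilde x_i = x_i$, $\tilde g_i = g_i - \mu x_i$, $\tilde f_i = f_i - \tfrac{\mu}{2}\|x_i\|^2$. One then invokes the well-known interpolation conditions for $\mathcal{F}_{0, L-\mu}$ from Taylor et al.~\cite{taylor_smooth_2017}, namely $\tilde f_i - \tilde f_j - \langle \tilde g_j, x_i - x_j\rangle \geq \tfrac{1}{2(L-\mu)}\|\tilde g_i - \tilde g_j\|^2$, and verifies that substituting the shifted expressions and expanding the squares and inner products yields exactly \eqref{eq:Interp_hypoconvex}. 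Sufficiency follows by reversing the shift: given any interpolating $\tilde f \in \mathcal{F}_{0,L-\mu}$, the function $f = \tilde f + \tfrac{\mu}{2}\|\cdot\|^2$ lies in $\mathcal{F}_{\mu,L}$ and interpolates the original triplets $\mathcal{T}$.

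For the minimum characterization, one direction is immediate from the upper-curvature assumption: the descent inequality $f(y) \leq f_i + \langle g_i, y-x_i\rangle + \tfrac{L}{2}\|y - x_i\|^2$, minimized in $y$, is attained at $y = x_i - \tfrac{1}{L} g_i$ with value $f_i - \tfrac{1}{2L}\|g_i\|^2$; hence for any $f \in \mathcal{F}_{\mu,L}$ interpolating $\mathcal{T}$ with finite global minimum, $f_* \leq \min_{i\in\mathcal{I}} \{f_i - \tfrac{1}{2L}\|g_i\|^2\}$. To exhibit one interpolating function attaining this bound, let $i_* \in \arg\min_i\{f_i-\tfrac{1}{2L}\|g_i\|^2\}$ and set $x_* := x_{i_*}-\tfrac{1}{L}g_{i_*}$, $g_* := 0$, $f_* := f_{i_*}-\tfrac{1}{2L}\|g_{i_*}\|^2$. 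I would then verify that the enlarged set $\mathcal{T}' := \mathcal{T} \cup \{(x_*, g_*, f_*)\}$ still satisfies the interpolation inequality \eqref{eq:Interp_hypoconvex} for every new pair involving $(x_*, g_*, f_*)$: the pair $(i_*, *)$ holds with equality by construction (direct substitution using $x_* - x_{i_*} = -\tfrac{1}{L}g_{i_*}$ makes both sides equal to $\tfrac{1}{2L}\|g_{i_*}\|^2$), and the remaining pairs reduce via algebra to the original interpolation inequalities between $(i, i_*)$ together with the defining optimality of $i_*$. Applying the sufficiency of the interpolation conditions established in the first stage to $\mathcal{T}'$ then produces an interpolating $f \in \mathcal{F}_{\mu,L}$ with $f(x_*)=f_*$ and $\nabla f(x_*)=0$, giving $\min f \le f_*$.

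The main obstacle is the final step: producing an interpolating function whose global minimum is \emph{exactly} $f_*$ and not merely bounded above by $f_*$. Because a hypoconvex function with $\mu<0$ can be unbounded below, one must pick the interpolant carefully. The cleanest route is again through the shift: construct $\tilde f \in \mathcal{F}_{0,L-\mu}$ interpolating the shifted extended data via the explicit construction of \cite{taylor_smooth_2017}, and recover $f = \tilde f + \tfrac{\mu}{2}\|\cdot\|^2$; one then shows that the additional anchoring triplet forces $\tilde f$ to grow sufficiently fast that $f$ is bounded below by $f_*$, with equality attained at $x_*$. This pairwise verification for $\mathcal{T}'$, together with the growth/boundedness estimate for the recovered $f$, forms the technical heart of the argument; the remainder reduces to routine algebraic manipulations.
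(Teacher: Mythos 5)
Your first part (the interpolation conditions) is fine and essentially matches the paper: the paper also reduces to the convex case via the curvature shift $f\mapsto f-\tfrac{\mu}{2}\|\cdot\|^2$ (checking that the shift lemma of Taylor et al.\ extends to $\mu<0$) and recovers \eqref{eq:Interp_hypoconvex} by substitution; your shortcut of invoking the $\mathcal{F}_{0,L-\mu}$ interpolation theorem directly, rather than re-running the conjugation chain, is legitimate. The easy inequality $f_*\le\min_i\{f_i-\tfrac1{2L}\|g_i\|^2\}$ via the descent lemma is also correct.

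The genuine gap is in the second part, and you have in fact located it yourself without closing it: you must produce an interpolating function that is bounded \emph{below} by $\min_i\{f_i-\tfrac1{2L}\|g_i\|^2\}$, and your proposal only asserts that ``the anchoring triplet forces $\tilde f$ to grow sufficiently fast.'' This does not follow from adding the triplet $(x_*,0,f_*)$: for a generic interpolant, the only information the anchor gives about the shifted convex function $\tilde f\in\mathcal{F}_{0,L-\mu}$ is the tangent bound $\tilde f(y)\ge \tilde f(x_*)+\langle -\mu x_*,\,y-x_*\rangle$, and unshifting yields merely $f(y)\ge f_*+\tfrac{\mu}{2}\|y-x_*\|^2$, which is unbounded below when $\mu<0$. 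Having a stationary point of value $f_*$ gives $\min f\le f_*$, the same direction as the easy bound, so nothing is gained unless one proves a quantitative quadratic-growth estimate for a \emph{specific} interpolant --- and that estimate is the actual content of the result, not routine algebra. (Your intermediate claim that the extended set $\mathcal{T}'$ satisfies \eqref{eq:Interp_hypoconvex} for all pairs $(i,*)$ is also left unverified beyond the pair $(i_*,*)$; it is true, but it is a consequence of the construction rather than a trivial reduction to optimality of $i_*$.) The paper avoids the anchor altogether: it takes the explicit primal interpolation function of Drori (applied to the shifted data), writes the unshifted interpolant as $\hat W(y)=\min_{\alpha\in\Delta_{\mathcal I}}\big\{\tfrac L2\|y-\sum_i\alpha_i(x_i-\tfrac1Lg_i)\|^2+\tfrac L2\tfrac{\kappa}{1-\kappa}\|\sum_i\alpha_i(x_i-\tfrac1Lg_i)\|^2+\sum_i\alpha_i(f_i-\tfrac1{2L}\|g_i\|^2-\tfrac L2\tfrac{\kappa}{1-\kappa}\|x_i-\tfrac1Lg_i\|^2)\big\}$, drops the first (nonnegative) term, and applies Jensen's inequality to the second term, whose coefficient $\tfrac{\kappa}{1-\kappa}$ is nonpositive, so the quadratic terms cancel and $\hat W(y)\ge\min_i\{f_i-\tfrac1{2L}\|g_i\|^2\}$ for all $y$; evaluating at $y=x_{i_*}-\tfrac1Lg_{i_*}$ with $\alpha=e_{i_*}$ gives the matching upper bound. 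To repair your argument you would have to either adopt this construction or prove an analogous growth estimate for whichever explicit interpolant you choose; without it the proof is incomplete.
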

% Theorem \ref{thm:interp_hypo_characterization_min} helps to transform the infinite-dimensional PEP \eqref{eq:PEP_general_inf_dim} into a finite-dimensional PEP and also to derive the convergence rates proof for hypoconvex function from Theorem \ref{thm:wc_GM_hypo}. The PEP solving technique is given in Appendix \ref{appendix:Solve_PEP}. 
% With regard to its application to our problem, we comment on Theorem \ref{thm:interp_hypo_characterization_min} in the next two paragraphs. The two parts of the theorem are extensions of previous results based on which we construct our proof in Appendix \ref{appendix:proof_thm_interp_charact_min}.
%
\paragraph{\textbf{Interpolation conditions}} The first part of Theorem \ref{thm:interp_hypo_characterization_min} is a direct extension of \cite[Theorem 4]{taylor_smooth_2017} and covers smooth hypoconvex functions besides smooth convex and strongly convex functions. The only difference is allowing negative curvatures due to $\mu<0$. A graphical interpretation of the interpolating function for the particular case $\mu=-L$ is given in \cite[page 71]{PhD_AT_2017}. The interpolation conditions are the main ingredients to prove the convergence rates from Theorem \ref{thm:wc_GM_hypo}.
\paragraph{\textbf{Characterization of the optimal point}} A set of constraints of the type $f_i - f_* \geq 0$ does not guarantee the existence of a function with a global minimum equal to $f_*$. The second part of Theorem \ref{thm:interp_hypo_characterization_min} provides such a guarantee and leads to an exact reformulation after discretization (see Proposition \ref{prop:PEP_equiv} from the Appendix). 
In the particular case $\mu=-L$, the result is given in \cite[Theorem 7]{drori2021complexity} and exploited in \cite{abbaszadehpeivasti2021GM_smooth} to obtain tightness guarantees of the convergence rates for the gradient method. Theorem \ref{thm:interp_hypo_characterization_min} generalizes the previous result to arbitrary lower curvatures $\mu \leq L$, with $L>0$. This extension also appears in \cite[Remark 2.1]{drori2018_primal_interp_func} for strongly convex functions ($\mu>0$).%, which are similar in some sense to the hypoconvex ones.

    \section{Convergence rates}\label{sec:Results_wc_analysis}
		%!TEX root = ../../ms.tex
% \section{Convergence rates}\label{sec:Results_wc_analysis}

One can observe that many of the existing theorems about the convergence of the minimum gradient norm share a similar structure, i.e., that $\min_{0 {} \leq {} i {} \leq {} N} \big\{\|\nabla f(x_i)\|^2\big\} \le \frac{C}{q + \sum_{i=0}^{N-1} p(h_i)}$ holds after 
$N$ iterations, where $C$ and $q$ are constants and $p(h_i)$ is a certain function of the step size at each iteration.

We show that this type of structure can be expected in general for any optimization method, provided some assumption is made on the effect of each individual step, in Theorem \ref{thm:meta_thm_one_step_p_zero_step_q} below, proved in Appendix \ref{proof:thm:meta_thm_one_step_p_zero_step_q}. In addition, when combining with a descent lemma type result, this implies a tighter bound involving the global minimum $f_*$.
\begin{theorem}\label{thm:meta_thm_one_step_p_zero_step_q}
        Assume the effect of each step of some \textit{arbitrary} optimization method on some \textit{arbitrary} class of smooth functions satisfies the following bound 
    \begin{align}\label{eq:one_step_upper_bound}
        \min\{\|\nabla f(x_i)\|^2 \,,\, \|\nabla f(x_{i+1})\|^2\} \leq 
        \frac{f(x_i)-f(x_{i+1})}{p_i}
    \end{align}
    for some constants $p_i$ ($i=0,\ldots,N-1)$, all having the same sign. Then, after $N$ steps the following upper bound holds
    % \vspace{-0.2cm}
    \begin{align}
    \label{eq:meta_thm_only_p_big_rate}
        \min\limits _{0 \leq i \leq N}\{\|\nabla f(x_i)\|^2\} &\leq 
        \frac{f(x_0)-f(x_{N})}{\sum \limits _{i=0}^{N-1} p_i}
    \end{align}
    % \vspace{-0.15cm}
    If in addition we assume that functions in that class satisfy %Suppose that $f$ is bounded and
    \begin{align}\label{eq:zero_step_upper_bound}
        f(x) - f_* \geq {q} \|\nabla f(x)\|^2 , \forall x \in \mathbb{R}^d
        % \\ 
        % \tfrac{\|\nabla f(x)\|^2}{f(x) - f_* } \leq \frac{1}{q(h_i,\kappa,L)} 
    \end{align}
    for some $q$ with the same sign as $p_i$, we then also have after $N$ steps that
    \begin{align}
    \label{eq:meta_thm_p_and_q_big_rate}
        \min\limits _{0 \leq i \leq N}\{\|\nabla f(x_i)\|^2\} &\leq 
        \frac{f(x_0)-f_*}{q + \sum \limits _{i=0}^{N-1} p_i}
    \end{align}
\end{theorem}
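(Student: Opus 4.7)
The plan is to treat both conclusions as straightforward telescoping arguments built on one elementary observation. Let $m := \min_{0 \leq i \leq N} \|\nabla f(x_i)\|^2$. Since $m$ is the minimum over all iterates $x_0,\ldots,x_N$, it is in particular bounded above by $\min\{\|\nabla f(x_i)\|^2,\|\nabla f(x_{i+1})\|^2\}$ for every $i \in \{0,\ldots,N-1\}$. Chaining this with the hypothesis \eqref{eq:one_step_upper_bound} gives the pointwise bound
\begin{equation*}
    m \;\leq\; \frac{f(x_i) - f(x_{i+1})}{p_i}, \qquad i = 0,\ldots,N-1.
\end{equation*}

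Next I would multiply through by $p_i$ and sum. Because all $p_i$ share a common sign, the $N$ resulting inequalities $m\, p_i \leq f(x_i) - f(x_{i+1})$ all point in the same direction (or all in the reverse direction if $p_i < 0$, in which case the final division by $\sum_i p_i$ flips things back). Summation causes the right-hand side to telescope to $f(x_0) - f(x_N)$, and dividing by $\sum_{i=0}^{N-1} p_i$ yields precisely \eqref{eq:meta_thm_only_p_big_rate}.

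For \eqref{eq:meta_thm_p_and_q_big_rate}, the idea is to append a ``virtual zeroth step'' at the tail of the telescoping chain. Since $m \leq \|\nabla f(x_N)\|^2$, hypothesis \eqref{eq:zero_step_upper_bound} evaluated at $x = x_N$ gives $q\, m \leq q\,\|\nabla f(x_N)\|^2 \leq f(x_N) - f_*$ (again with signs handled uniformly by the common-sign assumption on $q$ and the $p_i$). Adding this inequality to the previously summed chain, the function differences on the right telescope to $f(x_0) - f_*$ and the coefficient of $m$ on the left becomes $q + \sum_{i=0}^{N-1} p_i$, which after division gives \eqref{eq:meta_thm_p_and_q_big_rate}.

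There is essentially no ``hard part'': the only subtle point is uniform sign handling, which the same-sign hypothesis reduces to bookkeeping. The content of the theorem is really the identification of \eqref{eq:one_step_upper_bound} and \eqref{eq:zero_step_upper_bound} as the two structural ingredients explaining the common $C/(q+\sum p(h_i))$ form of the rates observed throughout the paper; once these are postulated, the argument is a two-line telescoping reduction.
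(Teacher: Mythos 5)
Your proof is correct and follows essentially the same route as the paper's: a weighted summation of the per-step inequalities that telescopes the function values, plus the extra term from \eqref{eq:zero_step_upper_bound} at $x_N$, with the common-sign hypothesis ensuring the final division goes the right way. The only cosmetic difference is that you bound the global minimum by each per-step minimum before summing, whereas the paper forms the convex combination with weights $p_i/S$ (resp.\ $p_i/S_*$, $q/S_*$) and lower-bounds the resulting average afterwards.
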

Theorem \ref{thm:meta_thm_one_step_p_zero_step_q} helps to simplify the derivation of upper bounds, since it only requires to perform the analysis of each step separately. Note that the constant in the numerator is the relatively unusual expression $f(x_0)-f(x_N)$, which is always bounded from above by the initial iterate optimality gap $f(x_0)-f_*$. Furthermore, the second part of the Theorem  improves the bound with such a numerator, provided the functions satisfy condition \eqref{eq:zero_step_upper_bound}. 

In the case of hypoconvex functions, Theorem \ref{thm:interp_hypo_characterization_min} shows that it holds with $q=\tfrac{1}{2L}$. Moreover, since all steps of the gradient method are identical (apart from the varying step size $h_i$), it suffices to make the PEP analysis for one step and identify the constant $p_i=p(h_i,\mu,L)$ to obtain a general convergence rate. 

The central result in this section is Theorem \ref{thm:wc_GM_hypo} on the worst-case convergence rates of the gradient method when applied to smooth hypoconvex functions. The result was inferred from the numerical results obtained after solving a large number of PEPs \eqref{eq:PEP_finite_dimensional} for multiple setups of the parameters. We exploited the homogeneity conditions with respect to $L$ and $\Delta$ from \cite[Section 3.5]{taylor_smooth_2017} and fixed $L=\Delta=1$. In this way, it was enough to consider only two unknown parameters: the step sizes $h_i$ and lower curvatures $\mu$ (we use the normalized $\kappa := \tfrac{\mu}{L}$ constant below) and extend the obtained analytical expressions to arbitrary positive $L$ and $\Delta$. Note however that although the numerical solution of PEPs helped us derive this analytical expression, its proof from Appendix \ref{appendix:proof:wc_rate_hypo} is independent. 

\begin{theorem} \label{thm:wc_GM_hypo}
Let $f\in \mathcal{F}_{\mu, L}(\mathbb{R}^d)$ be a smooth hypoconvex function, with $L>0$ and $\mu\in(-\infty,0]$, and let $\kappa:=\tfrac{\mu}{L}$. 
Define step size threshold $\bar{h}(\kappa):=\tfrac{3}{1+\kappa+\sqrt{1-\kappa+\kappa^2}} \in [\tfrac{3}{2}, 2)$ and consider $N$ iterations of the gradient method \eqref{eq:GM_it_fixed_steps} with $h_i \in \big(0,\bar{h}(\kappa)\big]$, \, $i\in\{0,\dots,N-1\}$, generating the sequence $x_1,\dots,x_N$ starting from $x_0$. Then
{\normalsize
\begin{align}\label{eq:GM_hypo_rate_just_p}
    \min_{0 {} \leq {} i {} \leq {} N} \big\{ \|\nabla f(x_i)\|^2\big\} {} \leq {} 
        \frac{2L\big[f(x_0)-f(x_N)\big]}{ 
        \sum\limits_{i=0}^{N-1} p(h_i, \kappa)}
\end{align}
}
where
\vspace{-0.4cm}
{\normalsize
\begin{align}\label{eq:pi_hi_rate_GM}
    p (h_i, \kappa) = 
    \left\{
    \def\arraystretch{2}
        \begin{array}{ll}
            2h_i - h_i^2 \frac{-\kappa}{1-\kappa} &  \text{ if } h_i \in \big(0,1\big] \\
            \frac{h_i(2-h_i)(2-\kappa h_i)}{2-(1+\kappa) h_i} & \text{ if } h_i \in \big[1,\bar{h}(\kappa) \big]
        \end{array}
    \right.
\end{align}
}%
Additionally, if $f$ is bounded from below, then
{\normalsize
\begin{align}\label{eq:GM_hypo_rate_both_p_and_q}
    \min_{0 {} \leq {} i {} \leq {} N} \big\{ \|\nabla f(x_i)\|^2\big\} {} \leq {} 
        \frac{2L\big[f(x_0)-f_*\big]}
        {1+\sum\limits_{i=0}^{N-1} p(h_i, \kappa)}
\end{align}
}%
\end{theorem}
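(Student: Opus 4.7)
The plan is to reduce the $N$-step statement to a single-step descent inequality and then invoke Theorem \ref{thm:meta_thm_one_step_p_zero_step_q}. Concretely, I would prove that for every iteration $i$ with $h_i\in(0,\bar h(\kappa)]$, the gradient step $x_{i+1}=x_i-(h_i/L)g_i$ on a function $f\in\mathcal{F}_{\mu,L}$ satisfies the one-step bound
\begin{align*}
\min\bigl\{\|g_i\|^2,\,\|g_{i+1}\|^2\bigr\}\;\leq\;\frac{2L\,\bigl[f(x_i)-f(x_{i+1})\bigr]}{p(h_i,\kappa)},
\end{align*}
with $p(h_i,\kappa)$ as in \eqref{eq:pi_hi_rate_GM}. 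Identifying $p_i := p(h_i,\kappa)/(2L)$ in \eqref{eq:one_step_upper_bound} and invoking \eqref{eq:meta_thm_only_p_big_rate} then delivers \eqref{eq:GM_hypo_rate_just_p}. For the strengthened bound \eqref{eq:GM_hypo_rate_both_p_and_q}, I would apply the second half of Theorem \ref{thm:meta_thm_one_step_p_zero_step_q} with $q=1/(2L)$; this value certifies \eqref{eq:zero_step_upper_bound} as a direct consequence of the characterization of the global minimum in \eqref{eq:Charact_f*}, which yields $f(x_i)-f_*\geq\tfrac{1}{2L}\|g_i\|^2$ for every iterate.

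To prove the one-step inequality, I would set up the one-step PEP on the pair $(x_0,x_1)$ with $x_1=x_0-(h/L)g_0$ and performance measure $\min\{\|g_0\|^2,\|g_1\|^2\}$, reformulated via an epigraph variable $t$ constrained by $t\leq\|g_0\|^2$ and $t\leq\|g_1\|^2$, under the normalization $f_0-f_1\leq 1$ (using the homogeneity in $L$ and $\Delta$ mentioned preceding the statement) and the two interpolation inequalities from \eqref{eq:Interp_hypoconvex} applied to the ordered pairs $(0,1)$ and $(1,0)$. Weak duality then reduces the problem to exhibiting nonnegative multipliers $\lambda_{01},\lambda_{10}$ for the interpolation inequalities and $\alpha_0,\alpha_1\geq 0$ with $\alpha_0+\alpha_1=p(h,\kappa)$ for the epigraph constraints, together with an algebraic rearrangement of their weighted combination into
\begin{align*}
p(h,\kappa)\,t \;\leq\; 2L\,(f_0-f_1) \;+\; \mathcal{S}(g_0,g_1),
\end{align*}
where $\mathcal{S}$ is a manifestly nonnegative (sum-of-squares) quadratic form in $(g_0,g_1)$. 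The piecewise definition of $p$ with transition at $h=1$ is then explained by a change in the active-constraint pattern of the dual: in the short-step regime $h\in(0,1]$ only one of the two interpolation inequalities is tight, whereas in the long-step regime $h\in[1,\bar h(\kappa)]$ both are simultaneously tight, which complicates the algebra and produces the rational expression in the second branch.

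The principal obstacle will be to guess and then verify closed-form multipliers that attain the tight value $p(h,\kappa)$, particularly in the long-step regime where the expression $\tfrac{h(2-h)(2-\kappa h)}{2-(1+\kappa)h}$ is not a priori obvious from the interpolation inequalities. Numerical PEP solutions, alluded to just before the statement and carried out for many pairs $(h,\kappa)$, would guide both the ansatz for the multipliers and the identification of the threshold $\bar h(\kappa)$; algebraically, $\bar h(\kappa)$ is the largest value of $h$ for which the proposed multipliers remain nonnegative and the residual quadratic $\mathcal{S}$ remains positive semidefinite. Two sanity checks I would perform are: first, that both branches of \eqref{eq:pi_hi_rate_GM} evaluate to $(2-\kappa)/(1-\kappa)$ at $h=1$, so the bound is continuous across the regime change; and second, that specializing to $\kappa=-1$ recovers the existing rate \eqref{eq:GM_nonconvex_smooth} from \cite[Theorem 2]{abbaszadehpeivasti2021GM_smooth}. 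Once the one-step inequality is certified, the two multi-step bounds follow immediately from Theorem \ref{thm:meta_thm_one_step_p_zero_step_q}, with no further case analysis needed across iterations.
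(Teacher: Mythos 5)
Your overall strategy coincides with the paper's: reduce to a one-step inequality and feed it into Theorem \ref{thm:meta_thm_one_step_p_zero_step_q} with $q=\tfrac{1}{2L}$, obtaining the one-step constant by combining the two interpolation inequalities \eqref{eq:Interp_hypoconvex} for the ordered pairs $(0,1)$ and $(1,0)$ with nonnegative multipliers, with the threshold $\bar h(\kappa)$ arising from nonnegativity of the resulting coefficients. The genuine gap is that the proposal never establishes the one-step inequality with the specific constant $p(h,\kappa)$ of \eqref{eq:pi_hi_rate_GM}: you explicitly defer the ``principal obstacle'' of guessing and verifying the multipliers, but that verification \emph{is} the proof. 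For the record, the certificate is short and does not require any PEP/duality machinery: take the $(0,1)$ inequality with weight $1$ and the $(1,0)$ inequality with weight $\beta=\tfrac{h-1}{1-\kappa h}\ge 0$ (so $\beta=0$ at $h=1$, matching your active-constraint picture), substitute $x_1=x_0-\tfrac{h}{L}g_0$, rewrite $2\langle g_0,g_1\rangle$ as a difference of squares, and note that this choice of $\beta$ cancels the $\|g_0-g_1\|^2$ term; what remains is $\big[2-h(1+\kappa)\big](f_0-f_1)\ \ge\ \tfrac{h}{2L}\big[\kappa h^2-2h(1+\kappa)+3\big]\|g_0\|^2+\tfrac{h}{2L}\|g_1\|^2$, and requiring the $\|g_0\|^2$ coefficient to be nonnegative is exactly $h\le\bar h(\kappa)$, after which $P(h)=p(h,\kappa)/(2L)$ is the sum of the two coefficients. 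For $h\in(0,1]$ only the $(0,1)$ inequality is needed, discarding the nonnegative $(1-h)\|g_0-g_1\|^2$ term.

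Two further points. First, your dual identity has the residual on the wrong side: to conclude you need $2L\,(f_0-f_1)\ \ge\ p(h,\kappa)\,t+\mathcal S$ with $\mathcal S\ge 0$, so that $\mathcal S$ can be dropped; as written, $p(h,\kappa)\,t\le 2L\,(f_0-f_1)+\mathcal S$ with $\mathcal S\ge 0$ proves nothing. Second, the inequality $f(x)-f_*\ge\tfrac{1}{2L}\|\nabla f(x)\|^2$ does not follow formally from \eqref{eq:Charact_f*}, which asserts the existence of \emph{some} interpolating function with that minimum value rather than a property of the given $f$; the paper proves it directly from the descent lemma (valid using only the upper curvature $L$) applied with step size $1/L$, combined with $f$ being bounded below. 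Neither issue changes the route, but as it stands your text is a plan that matches the paper's proof rather than a complete argument.
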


\begin{figure}
    \centering
    \begin{subfigure}[t]{0.49\textwidth}
        \centering
        \includegraphics[width=\textwidth]{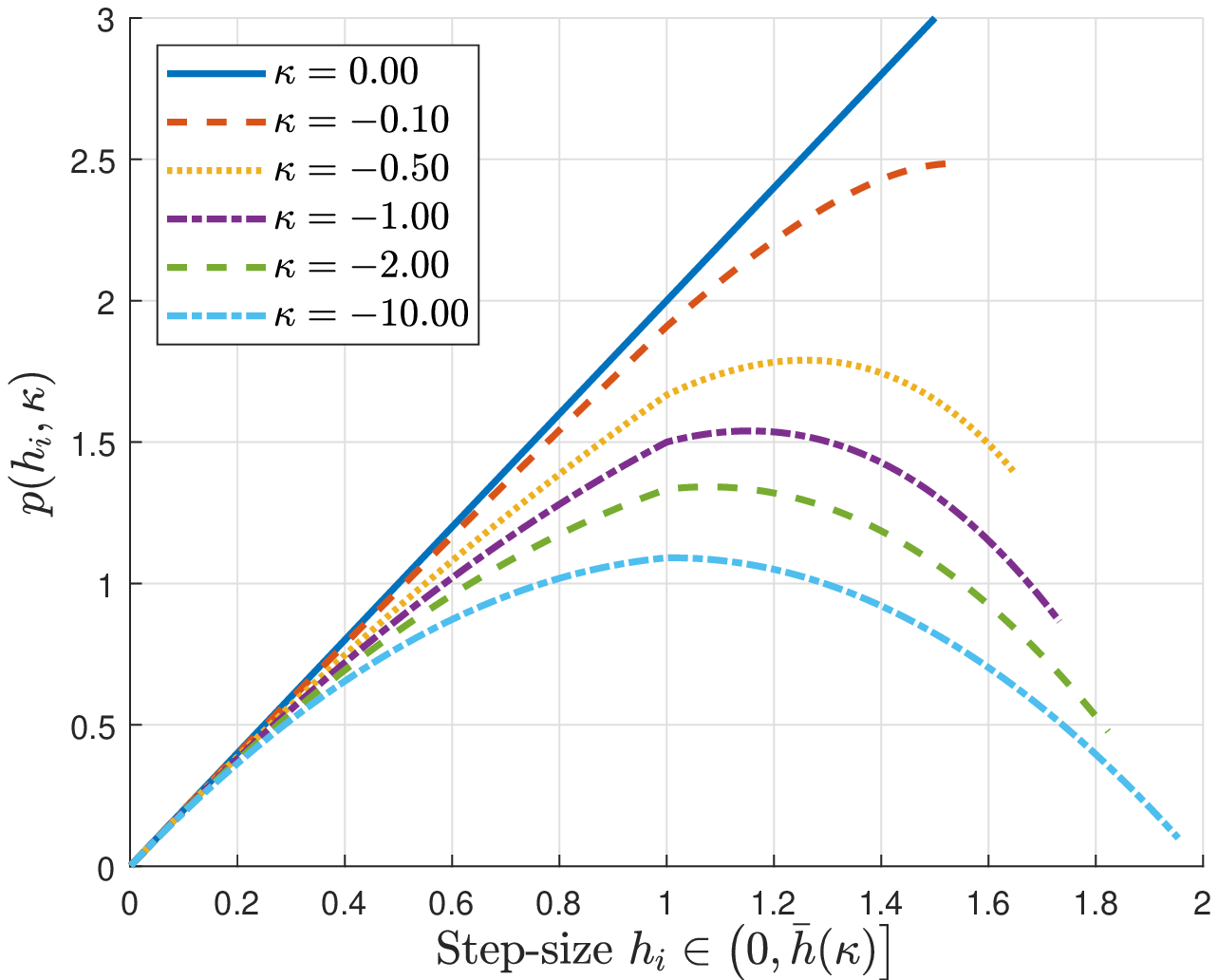}
        \caption{Dependence on the step sizes $h_i$ and the ratio $\kappa$ of the general term \eqref{eq:pi_hi_rate_GM}. For every $\kappa$, the $h_i$ belong to a sub-interval of $(0,2)$; in particular, $h=1$ marks the transition between the two regimes.}
        \label{fig:constant_func_denominator}
    \end{subfigure}
    \hfill
    \begin{subfigure}[t]{0.49\textwidth}
        \centering
        \includegraphics[width=\textwidth]{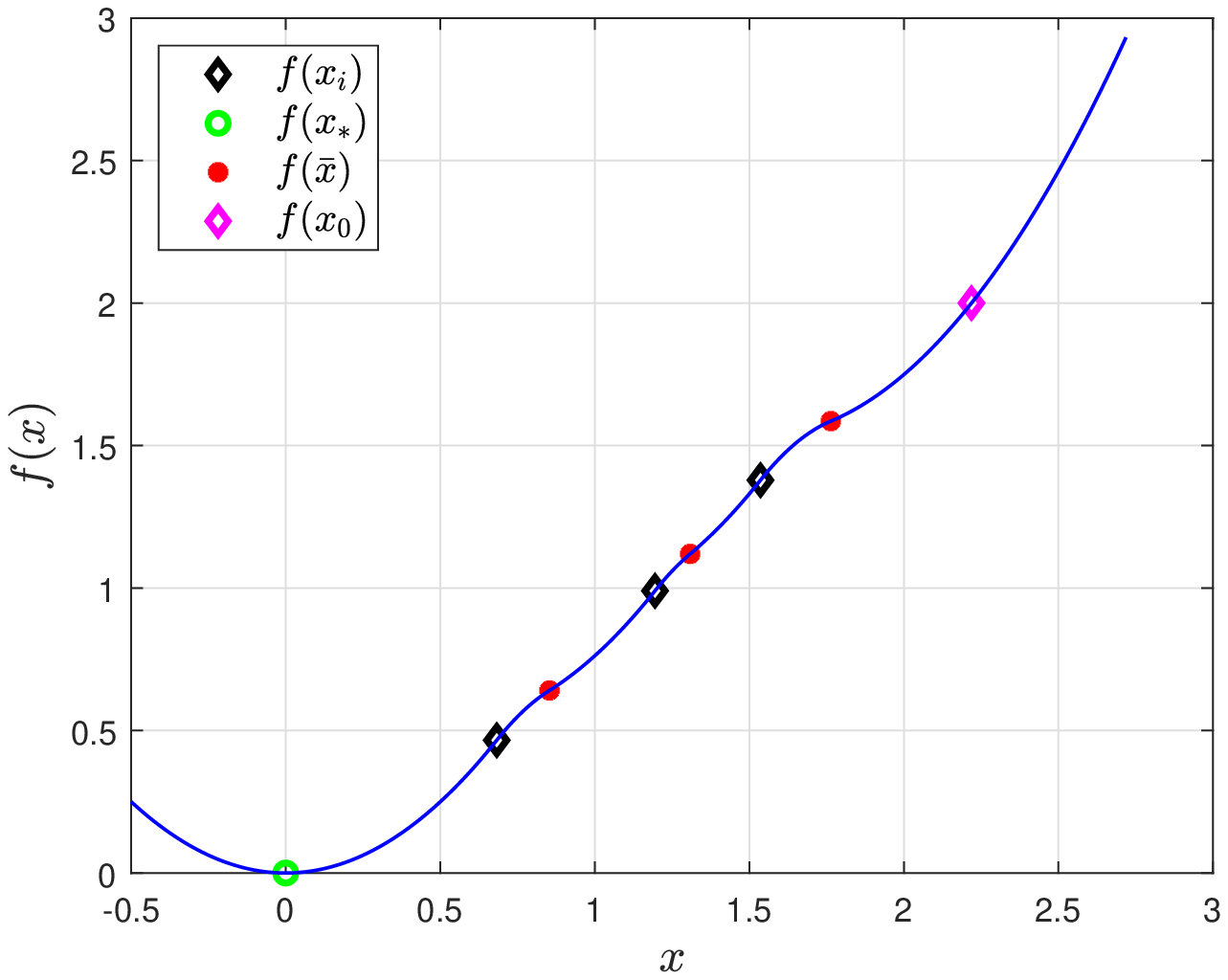}
        \caption{Example of a worst-case function (piecewise quadratic; see Appendix \ref{proof:prop:tightness_low_gamma}), corresponding to step sizes $h_i\leq 1$. Setup: $N=3$, $f_0-f_*=2$, $L=2$, $\kappa=-2$, $h_0=1$, $h_1=0.5$, $h_2=0.75$.}
        \label{fig:func_example}
    \end{subfigure}
    \caption{
    Intuitions about the worst-case regimes valid for varying step sizes.
    }
\end{figure}
To give an intuition about the bounds, the general term $p(h_i,\kappa)$ from \eqref{eq:pi_hi_rate_GM} is depicted in Figure \ref{fig:constant_func_denominator}. A larger value of $p$ translates to faster convergence. Hence one can observe that better rates %, i.e., maximum values  $\kappa$, 
correspond to the regime of step sizes larger than $1$. 

\paragraph{Particular cases} For $\mu=-L$, the rate \eqref{eq:GM_hypo_rate_both_p_and_q} exactly recovers the result from \cite[Theorem 2]{abbaszadehpeivasti2021GM_smooth}, i.e., the \textit{smooth nonconvex} case. Similarly, when $\mu \rightarrow -\infty$, the upper bound becomes the one derived by Nesterov in \cite[Section 1.2.3]{Nesterov_cvx_lectures} based on the descent lemma.

\subsection{Tightness of the results}\label{subsec:tightness_gamma_lg_1}
We now comment on the natural question of the tightness of the above bounds.

\paragraph{Shorter step sizes.} For $h_i \in (0,1]$, the bound is exact. This is proved by constructing a one-dimensional worst-case function example inspired from \cite[Proposition 4]{abbaszadehpeivasti2021GM_smooth}, see Appendix \ref{proof:prop:tightness_low_gamma}. Figure \ref{fig:func_example} shows such a function example, which is piece-wise quadratic.

\begin{proposition}\label{prop:tightness_low_gamma}
    For $h_i \in \big(0,1\big]$, $i\in \big\{0,\dots,N-1\big\}$, the upper bounds from Theorem \ref{thm:wc_GM_hypo} are tight.
\end{proposition}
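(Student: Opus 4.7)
The plan is to construct an explicit one-dimensional function $f \in \mathcal{F}_{\mu,L}(\mathbb{R})$ on which the gradient method with step sizes $h_0,\ldots,h_{N-1} \in (0,1]$ attains equality in both bounds of Theorem \ref{thm:wc_GM_hypo}. Building on the template of \cite[Proposition 4]{abbaszadehpeivasti2021GM_smooth}, which treats the special case $\mu=-L$, I expect $f$ to be piecewise quadratic with each piece of curvature in $\{\mu, L\}$ so that $f \in \mathcal{F}_{\mu,L}(\mathbb{R})$, and with the iterates $x_i$ coinciding with the transition points, as depicted in Figure \ref{fig:func_example}. Rather than writing $f$ down directly, the cleanest route is to specify a sequence of triplets $\{(x_i,g_i,f_i)\}_{i=0}^{N}$ together with a minimizer triplet $(x_*,0,f_*)$, verify the hypoconvex interpolation conditions of Theorem \ref{thm:interp_hypo_characterization_min}, and then invoke that theorem to realize them as an actual hypoconvex function.

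To saturate the bound I fix a common gradient magnitude $\|g_i\|=G$ for all $i$, with signs chosen so that the iterates stay in a bounded region (alternating signs is the natural first guess). This forces $x_{i+1}=x_i-\tfrac{h_i}{L}g_i$, and I then set the function values by telescoping: $f_i-f_{i+1}=\tfrac{p(h_i,\kappa)}{2L}G^2$. This yields $f_0-f_N=\tfrac{G^2}{2L}\sum_{i=0}^{N-1}p(h_i,\kappa)$, and hence $\min_i\|g_i\|^2=G^2=\tfrac{2L[f_0-f_N]}{\sum p(h_i,\kappa)}$, matching \eqref{eq:GM_hypo_rate_just_p}. To saturate \eqref{eq:GM_hypo_rate_both_p_and_q} as well, I pick $f_*=f_N-\tfrac{1}{2L}G^2$ and $x_*=x_N-\tfrac{1}{L}g_N$, consistent with the optimum characterization \eqref{eq:Charact_f*}, so that $f_0-f_*=\tfrac{G^2}{2L}\bigl(1+\sum p(h_i,\kappa)\bigr)$.

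It then remains to verify that the interpolation conditions \eqref{eq:Interp_hypoconvex} hold for every pair of indices in $\{0,\ldots,N,*\}$. By design, the conditions between consecutive iterates $(i,i+1)$ and between $x_*$ and the iterate achieving the minimum in \eqref{eq:Charact_f*} should hold with equality---this mirrors the primal certificate of complementary slackness underlying the proof of Theorem \ref{thm:wc_GM_hypo}. The main obstacle is verifying the inequality for non-consecutive pairs $(i,j)$ with $|i-j|\geq 2$: the terms $\tfrac{1}{L}\|g_i-g_j\|^2$, $\mu\|x_i-x_j\|^2$, and the cross term compete in sign because $\mu<0$, so the cumulative displacement $x_i-x_j$ must be controlled carefully. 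This is where the restriction $h_i\leq 1$ enters decisively: it prevents each step from overshooting across local quadratic pieces and keeps the geometry compatible with the inequality, which suggests why this direct tightness argument stops precisely at $h_i=1$ and why a different construction is required for the regime $h_i>1$. Once the finite case analysis generalizing \cite[Proposition 4]{abbaszadehpeivasti2021GM_smooth} is carried out, Theorem \ref{thm:interp_hypo_characterization_min} delivers a function $f$ realizing the worst case, and the gradient method run on $f$ saturates both bounds.
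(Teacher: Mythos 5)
Your high-level plan coincides with the paper's: a one-dimensional worst case with all gradient magnitudes equal to the bound value $U$, function values telescoping as $f_i-f_{i+1}=\tfrac{p(h_i,\kappa)}{2L}U^2$, and the minimizer appended via \eqref{eq:Charact_f*} ($x_*=x_N-\tfrac1L g_N$, $f_*=f_N-\tfrac{1}{2L}U^2$). However, as written the proposal has a genuine gap: the step you yourself flag as ``the main obstacle''---establishing that the proposed data are $\mathcal F_{\mu,L}$-interpolable, i.e.\ verifying \eqref{eq:Interp_hypoconvex} for all pairs, including non-consecutive ones and the pairs involving $x_*$---is never carried out, and that is exactly where the content of the proposition lies. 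Moreover, the hedge that ``alternating signs is the natural first guess'' points in the wrong direction: with constant steps $h_i\equiv h$ and gradients $+G,-G,+G,\dots$ the iterates satisfy $x_2=x_1+\tfrac{h}{L}G=x_0$ while $f_2<f_0$, which no function can interpolate, and even for non-constant steps spatially close non-consecutive iterates with function-value gaps of order $G^2/L$ violate the upper-curvature part of \eqref{eq:Interp_hypoconvex}. The correct (and the paper's) choice is to give \emph{all} gradients the same sign, $g_i=U>0$ for every $i$, so that $x_0>x_1>\dots>x_N>x_*$ march monotonically toward the minimizer; boundedness is a non-issue for finite $N$.

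The paper also avoids the pairwise verification you defer by exhibiting the interpolating function explicitly rather than invoking Theorem \ref{thm:interp_hypo_characterization_min} abstractly: $f$ is piecewise quadratic with curvature $\mu$ on $[x_{i+1},\bar x_i]$ and $L$ on $[\bar x_i,x_i]$, where $\bar x_i:=x_i-\tfrac{-\kappa}{1-\kappa}\tfrac{h_i}{L}U$, glued $C^1$ so that $f(x_i)=f_i$ and $f'(x_i)=U$ for every $i$, and capped outside $[x_N,x_0]$ by curvature-$L$ quadratics (for the bound with $f_*$, the left cap is $\tfrac L2 x^2$, attaining its minimum $f_*=0$ at $x_*=0=x_N-\tfrac1L U$). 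Membership in $\mathcal F_{\mu,L}(\mathbb R)$ is then immediate from the piecewise curvatures lying in $\{\mu,L\}$, the gradient method genuinely produces the iterates $x_i$ since $\nabla f(x_i)=U$, and equality in \eqref{eq:GM_hypo_rate_just_p} and \eqref{eq:GM_hypo_rate_both_p_and_q} follows from the telescoped values. To turn your proposal into a proof you must either write down this explicit function (generalizing \cite[Proposition 4]{abbaszadehpeivasti2021GM_smooth} by inserting the $\mu$-curvature segments of length $\tfrac{-\kappa}{1-\kappa}\tfrac{h_i}{L}U$) or actually perform the all-pairs verification of \eqref{eq:Interp_hypoconvex}; neither is done in the current text.
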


\paragraph{Larger step sizes} We could not identify a worst-case function valid in the case $h_i \in \big[1, \bar{h}(\kappa)\big]$. However, the primal solution of PEP can be seen as a (numerical) proof of the lower bound. More precisely, for every solution of the optimization problem $\mathcal{T}=\big\{(x_i,g_i,f_i)\big\}_{i\in \mathcal{I}}$, there exists an interpolating function $f$ (see Proposition \ref{prop:PEP_equiv}). From the numerical simulations we observed that when $h > 1$ the worst-case candidate functions for $N$ steps are ($N$+1)-dimensional.

\subsection{The convex case}\label{subsec:rates_cvx}
The class of convex functions can be seen as a particular class of hypoconvex functions with $\mu=0$. The problem of finding exact worst-convergence rates for the gradient norm of the iterates of the gradient method applied to smooth convex functions does not appear to have received a lot of attention. 
The exact rate for the constant step size $h=1$ is determined in \cite[Theorem 5.1]{Kim2021_PEP_Cvx}. With the help of Theorem \ref{thm:wc_GM_hypo}, we extend the upper bounds for step sizes $h_i \in \big(0,\frac{3}{2}\big]$.

The rate from Proposition \ref{prop:cvx_rate} is similar to the one conjectured in \cite[Conjecture 3.1]{drori_performance_2014} for step sizes $h_i{} \leq {} 1$, where the distance to the optimal value $f(x_N)-f_*$ is measured, instead of the gradient norm. The tightness is proved in Appendix \ref{proof:prop:cvx_rate}.
% \todo[inline]{TR: Write both rates (with / without $f_*$) + Assumptions!}
\begin{proposition}[Exact worst-case rate for \textbf{convex} functions] \label{prop:cvx_rate}
Let $f \in \mathcal{F}_{0, L}$ be a smooth convex function and $f_*$ its global minimum. Consider $N$ iterations of the gradient method \eqref{eq:GM_it_fixed_steps} with $h_i \in \big(0, \frac{3}{2}\big]$. Then the upper bounds \eqref{eq:cvx_gL_ll_3/2} are tight.
{\normalsize
\begin{align}\label{eq:cvx_gL_ll_3/2}
        \|\nabla f(x_N)\|^2
        {} \leq {} 
        \frac{L\big[f(x_0)-f(x_N)\big]}{ 
        \sum\limits_{i=0}^{N-1} h_i}
        \quad \text{and} \quad
        \|\nabla f(x_N)\|^2
        {} \leq {} 
        \frac{L\big[f(x_0)-f_*\big]}{\tfrac{1}{2} + 
        \sum\limits_{i=0}^{N-1} h_i}
\end{align}
}%
\end{proposition}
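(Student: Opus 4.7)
The plan is to derive the two upper bounds in \eqref{eq:cvx_gL_ll_3/2} as a direct specialization of Theorem \ref{thm:wc_GM_hypo} to $\kappa = 0$, and then establish tightness in two step-size regimes separately. First I evaluate $\bar h(0) = \tfrac{3}{1+0+\sqrt{1}} = \tfrac{3}{2}$, confirming that the step-size range of the proposition matches the range of validity of Theorem \ref{thm:wc_GM_hypo} when $\mu = 0$. Plugging $\kappa = 0$ into both pieces of \eqref{eq:pi_hi_rate_GM} gives the same expression $p(h_i, 0) = 2 h_i$: the first branch reduces trivially, and in the second branch the factor $(2 - h_i)$ in the numerator cancels with the denominator $2 - h_i$ after using $2 - \kappa h_i = 2$. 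Substituting $p(h_i,0) = 2 h_i$ into \eqref{eq:GM_hypo_rate_just_p} and \eqref{eq:GM_hypo_rate_both_p_and_q} and dividing numerator and denominator by $2$ produces precisely the two right-hand sides of \eqref{eq:cvx_gL_ll_3/2}, but with $\min_{0 \leq i \leq N}\|\nabla f(x_i)\|^2$ on the left. To replace the minimum by $\|\nabla f(x_N)\|^2$, I invoke the monotonic decrease of the gradient norm along gradient-method iterates on smooth convex functions (stated in Section \ref{subsec:Gradient_Method} and proved in Appendix \ref{proof:decreasing_gradient}), which holds for $h_i \in (0,2)$ and hence for our range $(0, \tfrac{3}{2}]$.

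For tightness on $h_i \in (0,1]$, I specialize the worst-case function constructed in Proposition \ref{prop:tightness_low_gamma} to $\kappa = 0$. One only needs to verify that the piecewise-quadratic example inherited from the hypoconvex proof has all piece-curvatures within $[0,L]$ when $\mu = 0$, so that the resulting function indeed lies in $\mathcal{F}_{0,L}$; this is a direct check on the construction used in Figure \ref{fig:func_example}.

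The main obstacle is tightness on the complementary regime $h_i \in (1, \tfrac{3}{2}]$, where Proposition \ref{prop:tightness_low_gamma} no longer applies and no general hypoconvex worst-case is known, as noted in Subsection \ref{subsec:tightness_gamma_lg_1}. Here I would first read off a candidate worst-case trajectory $\mathcal{T} = \{(x_i, g_i, f_i)\}$ from the primal solution of the convex PEP with $\kappa = 0$ (the KKT conditions of \eqref{eq:PEP_finite_dimensional} simplify significantly when $\mu = 0$), then use the $\mu = 0$ case of the interpolation conditions in Theorem \ref{thm:interp_hypo_characterization_min} to exhibit an explicit interpolating $f \in \mathcal{F}_{0,L}$ — most naturally a one-dimensional piecewise quadratic of Huber type, with pieces of curvature $L$ where the trajectory overshoots the minimum and pieces of smaller curvature elsewhere. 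The remaining work is to calibrate breakpoints and piece values so that (i) each step of the gradient method on this $f$ reproduces the target iterates, (ii) the one-step bound used inside the proof of Theorem \ref{thm:wc_GM_hypo} is attained with equality at every index, and (iii) the concatenated function is globally smooth with Lipschitz constant $L$ and convex. Verifying this last \emph{global} $\mathcal{F}_{0,L}$-interpolability, as opposed to matching only the local one-step data, is where I expect the technical difficulty to concentrate; both inequalities in \eqref{eq:cvx_gL_ll_3/2} should then fall out of the same family of examples, with the first bound becoming tight as $f_* \to f(x_N)$ (or for $N$ large enough that the constant $\tfrac{1}{2}$ becomes negligible).
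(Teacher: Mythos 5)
Your derivation of the two upper bounds is exactly the paper's: set $\kappa=0$ in Theorem \ref{thm:wc_GM_hypo}, note $\bar h(0)=\tfrac32$ and that both branches of \eqref{eq:pi_hi_rate_GM} collapse to $p(h_i,0)=2h_i$, then replace the minimum by $\|\nabla f(x_N)\|^2$ using the monotonicity of the gradient norm from Appendix \ref{proof:decreasing_gradient}. That part is correct and complete.

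The gap is in tightness for $h_i\in(1,\tfrac32]$, which you treat as an open sub-problem to be attacked by reading a trajectory off the primal PEP solution and then calibrating a Huber-type interpolant, leaving conditions (i)--(iii) unverified. This is unnecessary, and as written your proof is incomplete precisely on the regime that distinguishes this proposition from a trivial specialization. The point you miss is that the restriction $h_i\le 1$ in Proposition \ref{prop:tightness_low_gamma} is an artifact of $\kappa<0$ (where the two branches of $p$ differ); at $\kappa=0$ the merged bound $p=2h$ is attained by the \emph{same} construction on the whole range $(0,\tfrac32]$. Indeed, setting $\kappa=0$ in \eqref{eq:wc_func_iterates_no_opt} and \eqref{eq:wc_func_iterates} makes the breakpoints $\bar x_i$ coincide with $x_i$, so the middle pieces become linear with slope $U$ (resp.\ $U_*$) and the function is a convex Huber-type function with curvatures in $\{0,L\}$, hence trivially in $\mathcal{F}_{0,L}$; no PEP-based calibration or global interpolability check is needed. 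On the linear part each gradient step moves by $\tfrac{h_i}{L}U$ with the gradient norm constant equal to $U$, so equality in \eqref{eq:cvx_gL_ll_3/2} holds for \emph{every} $h_i\in(0,\tfrac32]$ (the step-size cap is only needed for the validity of the upper bound, not for the example). Also, your closing remark that the first bound only becomes tight ``as $f_*\to f(x_N)$'' or for large $N$ is off: the paper exhibits a second explicit example (with the quadratic cap shifted so that $f_N=0$ and $\nabla f(x_N)=U$) that attains the $f_0-f_N$ bound exactly, with no limiting argument.
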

Following extensive numerical simulations, we conjecture the following upper bound for constant step sizes larger than $\tfrac{3}{2}$.

\begin{conjecture}\label{conjecture:cvx_large_steps}
Let $f \in \mathcal{F}_{0, L}$ be a smooth convex function and $f_*$ its global minimum. Consider $N$ iterations of the gradient method with a constant step size $h \in \big(\frac{3}{2},2\big)$. Then
{\normalsize
\begin{align*}
\begin{aligned}
    \|\nabla f(x_N)\|^2
    {} \leq {}  
    \tfrac{2L\big[f(x_0)-f(x_N)\big]}
    {\min \big\{
    -1+(1-h)^{-2N}
    \,,\,
    {
        2 N h} 
         \big\}}
    \quad \text{and} \quad
    \|\nabla f(x_N)\|^2
    {} \leq {}  
    \tfrac{2L\big[f(x_0)-f_*\big]}
    {\min \big\{
    (1-h)^{-2N}
    \,,\,
    {1 + 
        2 N h} 
         \big\}}.
\end{aligned}
\end{align*}
}%
\end{conjecture}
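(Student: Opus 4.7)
The plan is to prove the conjecture by a regime-based worst-case analysis, since the min-in-denominator structure reflects the fact that two distinct families of worst-case functions compete depending on $(N,h)$. A natural first observation is that the second bound (in terms of $f_0 - f_*$) is a formal consequence of the first (in terms of $f_0 - f_N$): starting from any $\|g_N\|^2 D \le 2L(f_0 - f_N)$ and adding the convex-smoothness inequality $\|g_N\|^2 \le 2L(f_N - f_*)$ yields $(D+1)\|g_N\|^2 \le 2L(f_0 - f_*)$. Since $-1+(1-h)^{-2N}$ becomes $(1-h)^{-2N}$ and $2Nh$ becomes $1+2Nh$ under $D \mapsto D+1$, and $\min(a,b)+1 = \min(a+1,b+1)$, the second statement follows once the first is established.

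For the exponential part of the denominator, the expected tight function is the one-dimensional quadratic $f(x) = \tfrac{L}{2}x^2$. A direct computation gives $x_i = (1-h)^i x_0$, $f_i = (1-h)^{2i}f_0$, and $\|g_N\|^2 = 2L(1-h)^{2N} f_0$, so that $\|g_N\|^2/(f_0-f_N) = 2L/((1-h)^{-2N}-1)$, matching the exponential term exactly. To produce a matching upper bound, I would construct an explicit dual PEP certificate: a nonnegative weighted sum of the convex-smooth interpolation inequalities \eqref{eq:Interp_hypoconvex} (with $\mu=0$) between appropriate pairs of iterates, with multipliers forming a geometric progression of ratio $(1-h)^{-2}$. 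Since $(1-h)^2 \in (1/4,1)$ for $h\in(3/2,2)$, these multipliers are well-defined, and because the worst case is one-dimensional, a closed-form certificate is plausible — it can be guessed from the PEP solver output and verified analytically by the KKT conditions at the quadratic extremizer.

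For the linear part, the one-step meta-approach of Theorem \ref{thm:meta_thm_one_step_p_zero_step_q} that powers Proposition \ref{prop:cvx_rate} no longer applies: the one-step inequality $\|g_{i+1}\|^2 \le L(f_i-f_{i+1})/h$ already fails on the same quadratic as soon as $h > \tfrac{3}{2}$, since the relevant ratio $2(1-h)^2/(2-h)$ then exceeds $1$. A genuinely multi-step argument is therefore required, combining interpolation inequalities between \emph{all} pairs of iterates (and possibly with the global minimizer), not just between consecutive ones. The corresponding tight function is expected to be an $(N{+}1)$-dimensional piecewise-quadratic construction in the spirit of Figure \ref{fig:func_example}, adapted to $\mu=0$; this aligns with the dimensional blow-up already observed in Section \ref{subsec:tightness_gamma_lg_1} for step sizes $h > 1$.

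The hardest step will be the linear regime. Pinning down the multi-step dual multipliers in closed form, especially near the transition $-1+(1-h)^{-2N} \approx 2Nh$ where both regimes are active, is typically what separates a proof from a numerically supported conjecture. Constructing the $(N{+}1)$-dimensional primal worst case and verifying both that it satisfies the convex-smooth interpolation conditions and that it exactly attains $\|g_N\|^2 = 2L(f_0-f_N)/(2Nh)$ requires a delicate, dimension-dependent balance of iterate magnitudes, and matching a certificate across the transition between the two regimes is almost certainly the principal technical obstruction.
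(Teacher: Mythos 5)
There is a genuine gap here, and it is worth being explicit about its nature: the statement you were asked to prove is stated in the paper as a \emph{conjecture}, supported only by numerical PEP evidence, and your proposal does not close that gap --- it is a roadmap rather than a proof. What you actually establish is limited to three things: (i) the reduction of the $f(x_0)-f_*$ bound to the $f(x_0)-f(x_N)$ bound by adding $\|\nabla f(x_N)\|^2 \le 2L\big(f(x_N)-f_*\big)$, which is correct and mirrors the $q=\tfrac{1}{2L}$ mechanism of Theorem \ref{thm:meta_thm_one_step_p_zero_step_q}; (ii) the computation on $f(x)=\tfrac{L}{2}x^2$ showing that the exponential term $-1+(1-h)^{-2N}$ is attained, which is a \emph{lower-bound} (tightness) statement and says nothing about the claimed upper bound; and (iii) the correct observation that the one-step inequality behind Proposition \ref{prop:cvx_rate} fails for $h>\tfrac{3}{2}$, which explains why a multi-step argument is needed but does not supply one.

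The core of the conjecture --- the two upper bounds themselves --- is never proved. For the exponential regime you only \emph{guess} a dual certificate with multipliers in geometric progression of ratio $(1-h)^{-2}$; no weighted combination of the interpolation inequalities \eqref{eq:Interp_hypoconvex} is written down, no nonnegativity of multipliers is checked, and nothing is verified beyond consistency with the quadratic extremizer (KKT consistency at one feasible point does not certify a global PEP bound). For the linear regime $2Nh$ you explicitly defer both the multi-step dual certificate and the $(N{+}1)$-dimensional worst-case construction, and you acknowledge that matching certificates across the transition $-1+(1-h)^{-2N}\approx 2Nh$ is the principal obstruction. Those deferred steps are precisely what separates this statement from a theorem; as written, your proposal reproduces the evidence and intuition already behind the conjecture but does not advance it to a proof.
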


\subsection{Application: optimal constant step size}\label{subsec:opt_gamma}
A direct benefit of the upper bounds from Theorem \ref{thm:wc_GM_hypo} is that they allow to deduce an optimal constant step size that minimizes the worst-case convergence rate of the gradient method applied to smooth hypoconvex functions. Since it exploits the lower curvature information better, this recommendation is superior to that of the smooth nonconvex case.

\begin{proposition}\label{prop:gamma_star}
    Let $f \in \mathcal{F}_{\mu,L}$ be a smooth hypoconvex function with $L>0$ and $\mu \leq 0$, $\kappa=\tfrac{\mu}{L} \leq 0$ and $\bar{\kappa}:= \tfrac{-9-5\sqrt{5} + \sqrt{190+90\sqrt{5}}}{4}
    \approx -0.1001$.
    Then the optimal step size $h_*$ for the gradient method with respect to the worst-case convergence rate from Theorem \ref{thm:wc_GM_hypo} is
    {\normalsize
    \begin{align}\label{eq:opt_stepsize_min_known_bound}
    h_*(\kappa) = \left\{ 
    \arraycolsep=3pt
    \def\arraystretch{1.2}
    \begin{array}{ll}
        h_{\text{opt}} & \kappa {} \leq {} \bar{\kappa} \\
        \bar{h}(\kappa) & \bar{\kappa} < \kappa \leq 0
    \end{array}
    \right.
    \end{align}
    }%
    where $\bar{h}(\kappa)$ is defined in Theorem \ref{thm:wc_GM_hypo} and
    $h_{\text{opt}}$ is the unique solution in $\big[1, \bar{h}(\kappa)\big]$ of 
    {\normalsize
    \begin{align*}
    \begin{aligned}
        -\kappa\left(1+\kappa\right) h^3 +
    \big[3\kappa + \left(1+\kappa\right)^2\big] h^2 -
    4\left(1+\kappa\right) h +
    4 = 0.
    \end{aligned}
    \end{align*}
    }%
\end{proposition}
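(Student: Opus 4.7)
The plan is to reduce the problem of minimizing the worst-case bound from Theorem \ref{thm:wc_GM_hypo} over a constant step size $h$ to \emph{maximizing} the one-step quantity $p(h,\kappa)$ over $h\in(0,\bar h(\kappa)]$, and then to locate this maximizer by analyzing the two branches of $p$ separately.

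First I would note that $h\mapsto\tfrac{2L[f(x_0)-f_*]}{1+N\,p(h,\kappa)}$ is strictly decreasing in $p(h,\kappa)>0$, so minimizing the rate is equivalent to maximizing $p(\cdot,\kappa)$ on $(0,\bar h(\kappa)]$. On the first branch $h\in(0,1]$, the function $p(h,\kappa)=2h-\tfrac{-\kappa}{1-\kappa}h^2$ is a concave quadratic whose unconstrained vertex sits at $h^{\circ}=\tfrac{1-\kappa}{-\kappa}\ge 1$ for every $\kappa<0$ (and at $+\infty$ for $\kappa=0$). Hence $p(\cdot,\kappa)$ is strictly increasing on $(0,1]$ and its maximum there is $p(1,\kappa)$, attained at $h=1$. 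Consequently the global maximizer on $(0,\bar h(\kappa)]$ lies in $[1,\bar h(\kappa)]$.

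On the second branch $h\in[1,\bar h(\kappa)]$, a direct quotient-rule computation gives
\[
\tfrac{\partial p}{\partial h}(h,\kappa) \;=\; \tfrac{2\,c(h,\kappa)}{\bigl(2-(1+\kappa)h\bigr)^2}, \qquad c(h,\kappa):=-\kappa(1+\kappa)h^3+[3\kappa+(1+\kappa)^2]h^2-4(1+\kappa)h+4.
\]
The denominator stays positive on $[1,\bar h(\kappa)]$ (for $\kappa>-1$ the inequality $\bar h(\kappa)<\tfrac{2}{1+\kappa}$ reduces to $(1-\kappa)^2>0$, and for $\kappa\le-1$ it is automatic), so the critical points of $p$ coincide with the roots of the cubic $c(\cdot,\kappa)$ in the interval. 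A direct evaluation gives $c(1,\kappa)=1$ for all $\kappa$, so $p$ is strictly increasing at $h=1$, and the behaviour on $[1,\bar h(\kappa)]$ is governed by the sign of $c(\bar h(\kappa),\kappa)$.

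The threshold $\bar\kappa$ is then obtained by solving $c(\bar h(\kappa),\kappa)=0$: at $\kappa=0$ the cubic is $(h-2)^2$ giving $c(\bar h(0),0)=\tfrac14>0$, while at $\kappa=-1$ it is $-3h^2+4$ giving $c(\bar h(-1),-1)=-5<0$, so the sign flips at a unique $\bar\kappa\in(-1,0)$. Substituting $\bar h(\kappa)=3/(1+\kappa+\sqrt{1-\kappa+\kappa^2})$ into $c(\bar h(\kappa),\kappa)=0$ and rationalizing the square root reduces the problem to a polynomial equation in $\kappa$ whose unique real root in $(-1,0)$ has the stated closed form. For $\kappa\in(\bar\kappa,0]$ one then concludes $c(h,\kappa)>0$ on $[1,\bar h(\kappa)]$, so $p$ is strictly increasing and $h_*=\bar h(\kappa)$; for $\kappa\le\bar\kappa$, the intermediate value theorem applied to $c(1,\kappa)>0\ge c(\bar h(\kappa),\kappa)$ gives a root $h_{\text{opt}}\in(1,\bar h(\kappa)]$ which is the global maximizer of $p$.

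The main obstacle is establishing the \emph{uniqueness} of this interior root of $c$ in $[1,\bar h(\kappa)]$ for all $\kappa\le\bar\kappa$, since $c$ is cubic and could a priori change sign three times. I would handle this by locating the remaining two roots of $c$ \emph{outside} that interval. Specifically, $c(0,\kappa)=4>0$ and the identity $c\bigl(\tfrac{2}{1+\kappa},\kappa\bigr)=\tfrac{4\kappa}{(1+\kappa)^2}<0$ (for $\kappa\in(-1,0)$), combined with the sign of the leading coefficient $-\kappa(1+\kappa)$ (negative for $-1<\kappa<0$, zero at $\kappa=-1$, positive for $\kappa<-1$) and the asymptotics of $c$ at $\pm\infty$, pin down the root structure: the other two roots lie outside $[1,\bar h(\kappa)]$, leaving $h_{\text{opt}}$ as the unique critical point. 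The algebraic bookkeeping to extract the explicit form of $\bar\kappa$ from the quartic produced by rationalization is tedious but conceptually routine, and a final numerical check confirms that $\bar\kappa\approx -0.1001$ is indeed the correct real root.
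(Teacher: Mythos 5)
Your proposal is correct and follows essentially the same route as the paper: reduce the problem to maximizing $p(h,\kappa)$ over $(0,\bar h(\kappa)]$, note that the first branch attains its maximum at $h=1$ so the maximizer lies in $[1,\bar h(\kappa)]$, and reduce the second branch to the stationarity cubic $-\kappa(1+\kappa)h^3+[3\kappa+(1+\kappa)^2]h^2-4(1+\kappa)h+4=0$, with $\bar\kappa$ characterized as the value of $\kappa$ at which this stationary point reaches the boundary $\bar h(\kappa)$. The only real divergence is how uniqueness of the interior critical point is justified: the paper asserts that $h\mapsto 2h+\kappa h^3/\bigl(2-(1+\kappa)h\bigr)$ is strictly concave on $[1,2)$, so its derivative (your cubic, up to the positive factor $2/\bigl(2-(1+\kappa)h\bigr)^2$) vanishes at most once there, whereas you instead locate the other two roots of the cubic outside $[1,\bar h(\kappa)]$ via sign evaluations at $0$, $1$, $2/(1+\kappa)$ and the asymptotics; both work, the concavity route being shorter and your route more explicit. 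One slip to fix in yours: the leading coefficient $-\kappa(1+\kappa)$ is \emph{positive} for $-1<\kappa<0$ and \emph{negative} for $\kappa<-1$, the opposite of what you state, so the asymptotic sign pattern in your root-counting must be adjusted accordingly (with the corrected signs the count goes through: for $-1<\kappa<0$ one root is negative and one exceeds $2/(1+\kappa)>\bar h(\kappa)$; for $\kappa<-1$ two roots are negative). Finally, like the paper, you leave the closed-form extraction of $\bar\kappa$ from $c(\bar h(\kappa),\kappa)=0$ and the uniqueness of the sign change in $\kappa$ at the level of a routine computation plus numerical check, which matches the level of detail of the paper's own proof.
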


The proof of Proposition \ref{prop:gamma_star} is given in the Appendix \ref{proof:prop:gamma_star}.
When taking the limit $\kappa \rightarrow -\infty$, from \eqref{eq:opt_stepsize_min_known_bound} we get the ``classical optimal step size'' $h=1$. Likewise, for $\kappa=-1$ we obtain the optimal step size for smooth nonconvex function given in \cite[Theorem 3]{abbaszadehpeivasti2021GM_smooth}, namely $h=\tfrac{2}{\sqrt{3}}$.

One may be interested in the actual benefit of using our recommendation. To answer this question, we focus on constant step sizes $h$, and compare the (inverse of the) constant $p(h,\kappa)$ for different choices of $h$. Figure \ref{fig:Compare_opt_ct} shows $1/p(h,\kappa)$ for: (i) the ``classical optimal step'' $h=1$, (ii) the optimal step size for smooth functions $h_*^s=\tfrac{2}{\sqrt{3}}$, (iii) the optimal step size for smooth convex functions $h=\tfrac{3}{2}$ (Proposition \ref{prop:gamma_star} with $\kappa=0$), and (iv) the optimal step size from Proposition \ref{prop:gamma_star}. When the nonconvexity decreases (i.e., when $\kappa$ becomes closer to zero), the optimal step size from \eqref{eq:opt_stepsize_min_known_bound} provides a larger improvement in comparison to the step $h_*^s$, that does not benefit from the lower curvature information. In the limit $\kappa \rightarrow \infty$, the classical optimal step size $h=1$ is recovered. Nevertheless, the result shows the maximal guaranteed improvement one can get from optimizing the constant, and it provides a continuous interpolation between the optimal step sizes for $\kappa=-1$ and $\kappa=0$.

Figure \ref{fig:Compare_gamma_max_opt_vs_k} presents the optimal constant step size $h_*(\kappa)$ that maximizes the $p$ term from \eqref{eq:pi_hi_rate_GM} and the threshold $\bar{h}(\kappa)$ of the rate from Theorem \ref{thm:wc_GM_hypo}. The non-smooth part which appears for $\kappa > \bar{\kappa}$ is due to the threshold $\bar{h}(\kappa)$. For this range of close-to-convex functions only this suggests that the optimal step size belongs to the range of large steps, $h > \bar{h}(\kappa)$, which is not covered by Theorem \ref{thm:wc_GM_hypo}.
\begin{figure}
    \centering
    \begin{subfigure}[t]{0.49\textwidth}
        \centering
        \includegraphics[width=\textwidth]{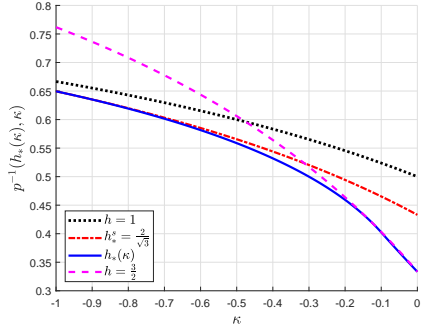}
        \caption{Comparison between the 
    ``classical optimal step'' $h=1$ 
    (in \textit{black})
    , 
    the step $h_*^s=\tfrac{2}{\sqrt{3}}$ recommended in \cite{abbaszadehpeivasti2021GM_smooth} for smooth nonconvex functions,
    % (in \textit{red})
    the optimal step $h=\tfrac{3}{2}$ for convex functions
    %($\kappa=0$ in Proposition \ref{prop:gamma_star}) 
    % (in \textit{dashed} magenta)
    and the optimal step size recommendation $h_*(\kappa)$ from Proposition \ref{prop:gamma_star},
    % (in \textit{blue})
    in terms of the inverse one-step bounds $p(h_i,\kappa)$ from \eqref{eq:pi_hi_rate_GM} for $h_i \geq 1$.}
        \label{fig:Compare_opt_ct}
    \end{subfigure}
    \hfill
    \begin{subfigure}[t]{0.49\textwidth}
        \centering
        \includegraphics[width=\textwidth]{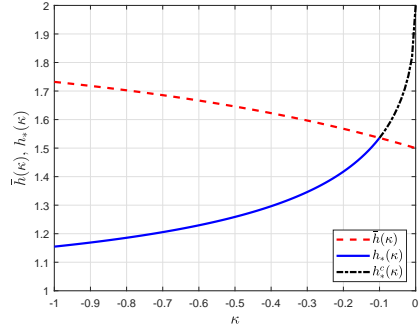}
        \caption{The threshold $\bar{h}(\kappa)$ from Theorem \ref{thm:wc_GM_hypo} 
        % (in \textit{red}) 
        and the optimal step size $h_*(\kappa)$ from Proposition \ref{prop:gamma_star}. 
    For $\kappa > \bar{\kappa} \approx -0.1$, the step size maximizing the $p$ term \eqref{eq:pi_hi_rate_GM} is the maximum allowed one, $\bar{h}(\kappa)$. The \textit{black} dashed line marks the conjectured optimal step size $h_*^c(\kappa)$ for $\kappa > \bar{\kappa}$ and a large number of iterations $N$, following Proposition \ref{prop:Conjectured_optimal_step}.}
        \label{fig:Compare_gamma_max_opt_vs_k}
    \end{subfigure} \\
    %%%%%%%%%%%%%%%%%%
    \begin{subfigure}[t]{0.49\textwidth}
        \centering
        \includegraphics[width=\textwidth]{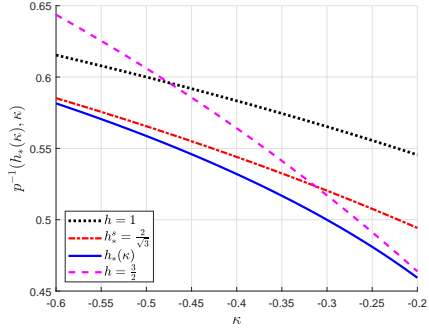}
        \caption{Zoom on a section of (a)}
        \label{fig:Compare_opt_ct_zoom}
    \end{subfigure}
    \hfill
    \begin{subfigure}[t]{0.49\textwidth}
        \centering
        \includegraphics[width=\textwidth]{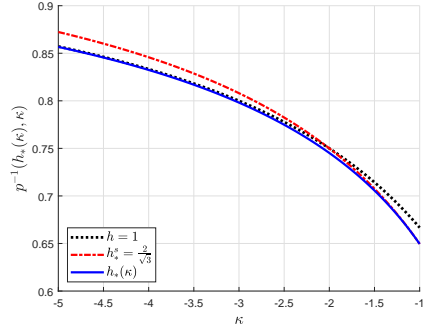}
        \caption{For $\kappa < -1$ the optimal step decreases to $h=1$}
        \label{fig:Compare_opt_ct_k_ll_m1}
    \end{subfigure}
    \caption{Step-sizes recommendations and their benefit}%\vspace*{-1cm}
    \label{fig:Compare_opt}
\end{figure}
\subsection{On the regime with larger step sizes} \label{subsec:3rd_regime}
We derived analytical expressions for the first two step size regimes in  Theorem \ref{thm:wc_GM_hypo}, but a closed form of the third regime, with steps $h \in \big(\bar{h}(\kappa), 2\big)$, seems to be more difficult to obtain.
However, using the numerical results from the solution of PEP \eqref{eq:PEP_finite_dimensional} (see Appendix \ref{appendix:Solve_PEP}) for multiple setups, the following partially explicitly analytical expressions of the worst-case convergence rate for constant step sizes are conjectured.

\begin{conjecture}\label{conjecture:3rd_regime} Let $f \in \mathcal{F}_{\mu,L}$ be a smooth hypoconvex function, $f_*$ its global minimum and $x_0$ the starting point. Consider $N$ iterations of the gradient method \eqref{eq:GM_it_fixed_steps} with a constant step $h \in \big(\bar{h}(\kappa), 2\big)$. Then
{\normalsize
\begin{align*}
\begin{aligned}
     \min_{0 {} \leq {} i {} \leq {} N} & \big\{\|\nabla f(x_i)\|^2 \big\}
    {} \leq {}
     \frac{2 L \big[f(x_0)-f(x_N)\big]}
     {\min \Big\{
     {-1+(1-h)^{-2N}} \ , \
     -1+r(L,\kappa,h) + N \frac{h(2-h)(2-\kappa h)}{2-h \left(1+\kappa \right)}
     \Big\}} \\
     \min_{0 {} \leq {} i {} \leq {} N} & \big\{\|\nabla f(x_i)\|^2 \big\}
    {} \leq {}
     \frac{2 L \big[f(x_0)-f_*\big]}
     {\min \Big\{
     {(1-h)^{-2N}} \ , \
     r(L,\kappa,h) + N \frac{h(2-h)(2-\kappa h)}{2-h \left(1+\kappa \right)}
     \Big\}}
\end{aligned}
\end{align*}
}
where for $r=r(L,\kappa,h)$ is an unknown function that does \emph{not} depend on the number of iterations $N$.
\end{conjecture}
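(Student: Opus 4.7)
The plan is to apply the same PEP framework used for Theorem \ref{thm:wc_GM_hypo}, but now with a dual certificate structured to recognize two competing worst-case behaviors. The first step is to notice that the exponential term $(1-h)^{-2N}$ has a transparent extremal example: taking $f(x) = \tfrac{L}{2}\|x\|^2 \in \mathcal{F}_{\mu,L}$ (valid since $\mu \le 0 \le L$), the gradient method with constant step $h$ produces $x_N = (1-h)^N x_0$, so that $\|\nabla f(x_N)\|^2 = L^2(1-h)^{2N}\|x_0\|^2$, $f(x_0)-f_* = \tfrac{L}{2}\|x_0\|^2$, and $f(x_0) - f(x_N) = \tfrac{L}{2}\bigl(1-(1-h)^{2N}\bigr)\|x_0\|^2$. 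A direct substitution shows these match, with equality, both conjectured denominators under the $(1-h)^{-2N}$ branch. This establishes the lower bound side of the $\min$ for that term, independently of $\kappa$.

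The second branch, $r(L,\kappa,h) + N\cdot\tfrac{h(2-h)(2-\kappa h)}{2-(1+\kappa)h}$, is exactly the $p(h,\kappa)$ expression from the second regime of Theorem \ref{thm:wc_GM_hypo}, telescoped $N$ times, plus an $N$-independent correction $r$. This strongly suggests that the corresponding worst case is a piecewise-quadratic function in the spirit of Proposition \ref{prop:tightness_low_gamma} and Figure \ref{fig:func_example}, extended to the $h>\bar{h}(\kappa)$ regime by attaching a short ``boundary'' segment (a few pieces near $x_0$ and/or $x_N$) that contributes the constant $r$ while the interior segments contribute linearly in $N$. The plan is to construct such a candidate with a handful of free parameters (the transition points and piece slopes), impose the gradient-method recursion plus the interpolation equalities from Theorem \ref{thm:interp_hypo_characterization_min} along active edges, and thereby determine $r(L,\kappa,h)$ implicitly.

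On the upper bound side, the strategy mirrors the proof of Theorem \ref{thm:wc_GM_hypo}: combine the one-step interpolation inequalities \eqref{eq:Interp_hypoconvex} with nonnegative multipliers to derive a weighted telescoping sum, then invoke Theorem \ref{thm:meta_thm_one_step_p_zero_step_q} and the characterization \eqref{eq:Charact_f*} to pass from $f(x_N)$ to $f_*$. In the third regime the naive one-step multipliers (those used to obtain the second-regime $p(h,\kappa)$) are no longer optimal; the numerical PEP solutions should be inspected to read off the correct multiplier pattern, which is expected to be stationary in the interior of the $N$ iterates and to change only for a bounded number of boundary iterations, explaining why the correction is $N$-independent. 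The $\min$ in the denominator then arises because the PEP dual admits two families of certificates, and the tight bound is the smaller of the two.

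The principal obstacle is obtaining a closed form for $r(L,\kappa,h)$. Numerical simulations indicate that $r$ arises from a low-dimensional algebraic system in the transition points of the piecewise-quadratic extremal function, but the resulting equations do not seem to admit an elegant rational solution in $(h,\kappa)$, which is precisely the reason the authors left this as Conjecture \ref{conjecture:3rd_regime} rather than a theorem. A realistic intermediate target would be to prove the $\min$ structure and the linear-in-$N$ slope while leaving $r(L,\kappa,h)$ as an implicitly defined quantity, then characterize $r$ in the two limits $\kappa \to 0$ and $\kappa \to -\infty$ as a consistency check against Proposition \ref{prop:cvx_rate} / Conjecture \ref{conjecture:cvx_large_steps} and against the bound \eqref{eq:GM_nonconvex_smooth}.
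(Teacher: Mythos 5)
The statement you are addressing is a \emph{conjecture}: the paper gives no proof of it, only numerical PEP evidence, so your proposal must stand on its own as an attempted proof, and as such it has a genuine gap. What is solid: the computation with $f(x)=\tfrac{L}{2}\|x\|^2$ is correct (one gets $x_N=(1-h)^Nx_0$ and equality in both conjectured bounds on the $(1-h)^{-2N}$ branch), so that branch of the denominator cannot be improved; and the identification of the $N$-coefficient of the other branch with the second-regime expression $p(h,\kappa)$ from \eqref{eq:pi_hi_rate_GM} is also correct. But neither observation touches the actual content of the conjecture, which is an \emph{upper} bound valid for every $f\in\mathcal{F}_{\mu,L}$ and every $h\in(\bar h(\kappa),2)$. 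You never exhibit a dual certificate: the statement that the PEP multipliers are ``stationary in the interior with a bounded number of boundary corrections'' is precisely equivalent to the existence of an $N$-independent $r(L,\kappa,h)$, i.e., it restates the conjecture rather than proving it. Moreover, for $h>\bar h(\kappa)$ the one-step combination of the two interpolation inequalities used in the proof of Theorem \ref{thm:wc_GM_hypo} yields a negative coefficient in front of $\|g_0\|^2$ (this is exactly why that proof is restricted to $h\le\bar h(\kappa)$), so any valid certificate must couple several consecutive iterations; no concrete multiplier pattern is proposed, let alone verified, and the claim that the $\min$ structure ``arises because the dual admits two families of certificates'' remains a heuristic.

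The tightness half of your plan is also questionable. You propose extending the one-dimensional piecewise-quadratic construction of Proposition \ref{prop:tightness_low_gamma} by attaching boundary pieces, but the paper reports in Section \ref{subsec:tightness_gamma_lg_1} that already for $h>1$ the numerically observed worst-case functions for $N$ steps are $(N+1)$-dimensional; this is presumably why no analytic worst-case example is known even in the second regime $h\in[1,\bar h(\kappa)]$, and it makes a low-dimensional extremal construction for $h>\bar h(\kappa)$ unlikely to exist. Your own ``realistic intermediate target'' (prove the $\min$ structure and the linear-in-$N$ slope with $r$ implicit) is therefore still entirely open under the plan as written: the only parts you actually establish are the lower-bound example for the exponential branch and consistency checks, which is less than what would be needed to upgrade the conjecture to a theorem.
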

These conjectured expressions involve the maximum of two rates, one of whom is of the usual sublinear type. From those, one can infer that there exists a number of steps $N_0$ such that, when performing $N>N_0$ iterations, the first $N_0$ steps belong to the first rate and the next ones to the second rate. For the latter, the fraction multiplying $N$ is the same as in \eqref{eq:pi_hi_rate_GM} for $h_i \geq 1$. 

\paragraph{\textbf{Optimal step size}} Asymptotically, for a large number of iterations $N$, the constant term $r(L,\kappa,h)$ becomes negligible. Hence minimizing the upper bound leads to an optimization problem over the entire range $h \in \big(0,2\big)$. Therefore, assuming Conjecture \ref{conjecture:3rd_regime} we propose Proposition \ref{prop:Conjectured_optimal_step} where the threshold $\bar{\kappa}$ from Proposition \ref{prop:gamma_star} disappears and the recommendation covers all regimes. The proof (assuming Conjecture \ref{conjecture:3rd_regime}) is given in Appendix \ref{proof:prop:Conjectured_optimal_step}.
\begin{proposition}\label{prop:Conjectured_optimal_step}
Let $f \in \mathcal{F}_{\mu,L}$ be a smooth hypoconvex function with $L>0$ and $\mu < 0$, and $\kappa=\tfrac{\mu}{L} < 0$. Then the optimal step size $h_*$ that minimizes the asymptotic worst-case convergence rate of the gradient method applied to hypoconvex functions is the unique solution in $[1, 2)$ of 
    {\normalsize
    \begin{align*}
    \begin{aligned}
        -\kappa\left(1+\kappa\right) h^3 +
    \left[3\kappa + \left(1+\kappa\right)^2\right] h^2 -
    4\left(1+\kappa\right) h +
    4 = 0
    \end{aligned}
    \end{align*}
    }%
\end{proposition}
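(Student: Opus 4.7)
The plan is to combine Theorem~\ref{thm:wc_GM_hypo} (which covers $h \in (0, \bar{h}(\kappa)]$) with Conjecture~\ref{conjecture:3rd_regime} (which covers $h \in (\bar{h}(\kappa), 2)$), reduce the problem to maximizing a single scalar function $h \mapsto p(h,\kappa)$ over $(0,2)$ in the asymptotic regime $N \to \infty$, and then carry out a standard calculus optimization on each of the two pieces. The key observation is that in Conjecture~\ref{conjecture:3rd_regime} the denominator is a minimum of two terms; for every fixed $h \in (\bar{h}(\kappa), 2)$ one has $h-1 \in (\bar{h}(\kappa)-1, 1) \subset (0,1)$, so $(1-h)^{-2N}$ grows \emph{exponentially} in $N$ whereas $r(L,\kappa,h) + N\cdot\tfrac{h(2-h)(2-\kappa h)}{2-(1+\kappa)h}$ grows only linearly; hence for $N$ large the min is attained by the second branch and the (bounded) constant $r$ is absorbed, leaving an asymptotic rate of the form $\tfrac{2L[f(x_0)-f_*]}{N\cdot p(h,\kappa)}$ with
\[
p(h,\kappa) \;=\; \begin{cases} 2h - \tfrac{-\kappa}{1-\kappa}\,h^2, & h \in (0,1], \\[2pt] \tfrac{h(2-h)(2-\kappa h)}{2-(1+\kappa)h}, & h \in [1,2). \end{cases}
\]

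First I would handle $h \in (0,1]$: the derivative $p'(h) = 2 - \tfrac{-2\kappa h}{1-\kappa}$ is strictly positive for every $\kappa < 0$ and $h \in (0,1]$ (since $\tfrac{1-\kappa}{-\kappa} > 1$), so $p$ is increasing on this sub-interval and its maximum there equals $p(1) = \tfrac{2-\kappa}{1-\kappa}$. Next I would handle $h \in [1,2)$: the two branches agree at $h=1$, $p$ is smooth and strictly positive on $(1,2)$, and $p(h) \to 0$ as $h \to 2^-$, so $p$ attains an interior maximum. Writing $p = N(h)/D(h)$ with $N(h) = h(2-h)(2-\kappa h)$ and $D(h) = 2-(1+\kappa)h > 0$ on $[1,2)$, a direct expansion of $N'(h)D(h) - N(h)D'(h)$ followed by division by the common factor $2$ yields exactly
\[
Q_\kappa(h) \;:=\; -\kappa(1+\kappa)\,h^3 + [3\kappa + (1+\kappa)^2]\,h^2 - 4(1+\kappa)\,h + 4 \;=\; 0.
\]
Existence of a root in $(1,2)$ is immediate since $Q_\kappa(1) = 1 > 0$ and $Q_\kappa(2) = 4\kappa(1-\kappa) < 0$ for $\kappa < 0$; the intermediate value theorem applies.

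The main obstacle will be \emph{uniqueness} of that root: the leading coefficient $-\kappa(1+\kappa)$ vanishes at $\kappa = -1$ and changes sign there, so $Q_\kappa$ is sometimes genuinely cubic and sometimes essentially quadratic, and a purely algebraic argument based on Descartes' rule or discriminants would need separate cases. The cleanest route I anticipate is to show directly that $p$ is strictly unimodal on $[1,2)$, for instance by proving that $\log p$ is strictly concave there, or equivalently that $p'/p$ is strictly monotone, which forces $p'$ to vanish exactly once. Once uniqueness is settled, the fact that $Q_\kappa(1) = 1 > 0$ gives $p'(1^+) > 0$, so the critical point lies strictly inside $(1,2)$ and the corresponding value of $p$ strictly exceeds $p(1)$; hence it is the global maximum of $p$ on $(0,2)$ and gives the claimed $h_*$. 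Note finally that this cubic is the same one appearing in Proposition~\ref{prop:gamma_star}; what is new here is that, thanks to Conjecture~\ref{conjecture:3rd_regime}, the maximizer is no longer constrained to $[1, \bar{h}(\kappa)]$, which is precisely why the threshold $\bar{\kappa}$ disappears from the present statement.
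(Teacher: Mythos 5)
Your plan coincides with the paper's proof in all the steps you actually carry out: the asymptotic reduction to maximizing $p(h,\kappa)$ over the whole range $(0,2)$ (your explicit comparison of the exponential branch $(1-h)^{-2N}$ with the linear-in-$N$ branch is in fact more detailed than the paper's remark that $r(L,\kappa,h)$ becomes negligible), the elimination of $(0,1]$ by monotonicity of $2h-\tfrac{-\kappa}{1-\kappa}h^2$, the computation showing that $N'D-ND'$ divided by $2$ is exactly the stated cubic $Q_\kappa$, and the endpoint evaluations $Q_\kappa(1)=1>0$, $Q_\kappa(2)=4\kappa(1-\kappa)<0$ giving existence of a root in $(1,2)$ — these are precisely the ingredients of the paper's argument.

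The genuine gap is the one you yourself flag: uniqueness of the root in $[1,2)$, which is part of the statement, is never proved — you only list candidate strategies (log-concavity of $p$, monotonicity of $p'/p$) without executing any, and your concern about a case split at $\kappa=-1$ suggests you had not found the clean route. The paper closes this by noting that $c(h)=2h+\tfrac{\kappa h^3}{2-(1+\kappa)h}$, which equals $p(h,\kappa)$ on $[1,2)$, is strictly concave there; then $c'$ is strictly decreasing, so it vanishes at most once, and together with the sign change exactly once, at the maximizer. This concavity is stronger than the unimodality you aimed for and is checkable directly: since $\kappa<0$ it suffices that $g(h)=h^3/(2-(1+\kappa)h)$ is convex on $[1,2)$, and with $D:=2-(1+\kappa)h>0$ one finds $g''(h)=\bigl(16D^{-3}-2\bigr)/(1+\kappa)$, which is positive both for $-1<\kappa<0$ (where $D<2$) and for $\kappa<-1$ (where $D>2$ and $1+\kappa<0$), while for $\kappa=-1$ one simply has $g(h)=h^3/2$. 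So your proposal needs this (or an equivalent) uniqueness argument to be complete; everything else matches the paper.
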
%
Figure \ref{fig:Compare_gamma_max_opt_vs_k} shows this conjectured extension of the optimal step size $h_*^c(\kappa)$ in \textit{black} dashed line. In the limit $\kappa \rightarrow 0$, the optimal step size for the convex case $\kappa=0$ is obtained as the upper bound of the convergence interval.

    % \section{Numerical experiments}\label{sec:experiments}%
        % \input{TeX/Text/5_Experiments}
        
    \section{Conclusion}\label{sec:conclusion}%
		%!TEX root = ../../ms.tex

% \section{Conclusion}\label{sec:conclusion}%
%
% \todo[inline]{Too much we in the conclusion!}
%\todo[inline]{TR: Write about the meta-theorem in  Conclusion!}

This paper performs the first worst-case analysis of the gradient method with varying step sizes applied to  smooth hypoconvex functions,  using the performance estimation technique. Interpolation conditions for functions from this class, and characterization of their minimizer, are introduced in Theorem \ref{thm:interp_hypo_characterization_min}. We identify three regimes with respect to the range of step sizes. For two of them a proof of the upper bound for step sizes $h \leq \bar{h}(\kappa)$ is provided. Its tightness is shown by constructing a worst-case example for steps $h\leq 1$. For larger steps, i.e., $h\in[1,\bar{h}(\kappa)]$, solving the PEP and identifying the analytical expression translates into an implicit numerical proof. For the third regime  $h \in \big(\bar{h}(\kappa),2\big)$, we conjecture a partially explicit analytical rate. We also obtain tight convergence rates for convex functions for the steps range $h \in \big(0,\tfrac{3}{2}\big]$. Finally, as a direct application of our rates, we identify an optimal constant step size recommendation for the gradient method on hypoconvex functions.  %are also obtained as a particular case of Theorem \ref{thm:wc_GM_hypo}. % Finally, oncerning  which is useful in the asymptotic regime to recommend the optimal step size.

%\paragraph{Final positive message!}

%\todo[inline]{Write about further steps}
    		
    % \section*{Acknowledgement}
    %     \TheAcknowledgements
    
	% =============================================================================
	
% 	\clearpage 
	\bibliographystyle{plain}
	\bibliography{ms.bib}
    
    % =============================================================================
    
    \clearpage
    
    \begin{appendix}%
        %% A1 -- decreasing convex case
        % \clearpage
        \section{Proof of the gradient norm monotonicity in the convex case}\label{proof:decreasing_gradient}
            %!TEX root = ../../main.tex

% \todo[inline]{TR: Avoid introducing any (new)  Proposition because of the numbering whole Appendix Section... Propositions and Theorem are safe to change the numbering.}
 
% \begin{proof}[Monotonicity of gradient norm for gradient steps on convex functions]\label{proof:decreasing_gradient_norm_cvx_func} 
\begin{proof}[Monotonicity of gradient norm for convex functions]\label{proof:decreasing_gradient_norm_cvx_func} 
We show that the gradient norm is not increasing after performing one step of the gradient method with step size $\tfrac{h}{L}$, where $h \in (0,2)$, on a smooth convex function $f \in \mathcal{F}_{0,L}$. In other words, given $x_1=x_0 - \tfrac{h}{L} g_0$ (recall we define $g_i=\nabla f(x_i)$), we have $\|g_0\|^2 \ge \|g_1\|^2  $. 

From cocoercivity of the gradient \cite[Theorem 2.1.5]{Nesterov_cvx_lectures} we have
\begin{align*}
    \langle g_1 - g_0 ,\, x_1 - x_0 \rangle \geq \frac{1}{L} \|g_0-g_1\|^2.
\end{align*}
Using $x_1=x_0 - \tfrac{h}{L} g_0$ in this inequality gives
\begin{align*}
    -\frac{h}{L}\langle g_1 - g_0 ,\, g_0 \rangle \geq \frac{1}{L} \|g_0-g_1\|^2 
    \Leftrightarrow
    \langle g_0 - g_1 ,\, g_0 \rangle \geq \frac{1}{h} \|g_0-g_1\|^2.
\end{align*}
The left-hand side is equal to $\|g_0\|^2 - \langle g_1, g_0\rangle$, and the inner product can be written as a difference of squares \[ \langle g_1, g_0\rangle = \frac{1}{2} \|g_0\|^2 + \frac{1}{2} \|g_1\|^2 - \frac{1}{2}\|g_0-g_1\|^2   \]
allowing us to rewrite the inequality as %Completing the squares we obtain:
\begin{align*}
    \frac{1}{2} \|g_0\|^2 - \frac{1}{2} \|g_1\|^2 + \frac{1}{2}\|g_0-g_1\|^2  \geq \frac{1}{h} \|g_0-g_1\|^2 
\end{align*}
which then gives
    \[
    \|g_0\|^2 - \|g_1\|^2 \geq \frac{2-h}{h} \|g_0 - g_1\|^2.\]
Therefore we have that the gradient norm is non-increasing when $h\in(0,2)$.

\end{proof}

        \clearpage
        \section{Proof of interpolation conditions for smooth hypoconvex functions} \label{appendix:proof_thm_interp_charact_min}
    		%!TEX root = ../../main.tex

To build the proof of Theorem \ref{thm:interp_hypo_characterization_min} we introduce some auxiliary results. Upper and lower bounds characterizing smooth hypoconvex functions are given in Lemma \ref{lemma:hypoconvex_smooth_quad_bounds}. They are used to construct Lemma \ref{lemma:hypconvex_mu_L_interpolable} showing the effect on interpolability conditions when shifting the curvature.

\begin{lemma}[Quadratic bounds]\label{lemma:hypoconvex_smooth_quad_bounds} 
If $f\in \mathcal{F}_{\mu, L}$, with $L > 0$ and $\mu \in \big(-\infty, L\big]$, then
{\normalsize
\begin{align}\label{eq:def_hypoconvex_smooth}
    \frac{\mu}{2} \|x-y\|^2 {} \leq {} f(x) - f(y) - \langle \nabla f(y), x-y \rangle {} \leq {} \frac{L}{2} \|x-y\|^2 \quad \forall x, y \in \mathbb{R}^d.
\end{align}
}%
\end{lemma}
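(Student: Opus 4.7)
The plan is to unwind the definitions directly: each bound in \eqref{eq:def_hypoconvex_smooth} is equivalent to a statement of first-order convexity applied to one of the two shifted functions that Definition \ref{def:upper_lower_curvature} guarantees to be convex.

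First I would handle the upper bound $f(x)-f(y)-\langle\nabla f(y),x-y\rangle\le \tfrac{L}{2}\|x-y\|^2$. By definition, $g:=\tfrac{L}{2}\|\cdot\|^2-f$ is convex on $\mathbb{R}^d$. Since $f$ is assumed smooth (hence differentiable), so is $g$, with $\nabla g(y)=Ly-\nabla f(y)$. The first-order characterization of convexity then gives $g(x)\ge g(y)+\langle\nabla g(y),x-y\rangle$ for all $x,y\in\mathbb{R}^d$. Substituting and rearranging, the quadratic terms $\tfrac{L}{2}\|x\|^2-\tfrac{L}{2}\|y\|^2-L\langle y,x-y\rangle$ on the right-hand side collapse (by the usual identity $\|x\|^2-\|y\|^2-2\langle y,x-y\rangle=\|x-y\|^2$) to exactly $\tfrac{L}{2}\|x-y\|^2$, which is the desired upper bound.

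For the lower bound $\tfrac{\mu}{2}\|x-y\|^2\le f(x)-f(y)-\langle\nabla f(y),x-y\rangle$, I would apply the same argument to $g:=f-\tfrac{\mu}{2}\|\cdot\|^2$, which is convex by the lower-curvature assumption. Its first-order convexity inequality, after the same algebraic simplification, yields precisely the left-hand inequality. The only place where the sign of $\mu$ could matter is in using convexity, but Definition \ref{def:upper_lower_curvature} explicitly allows $\mu\le 0$, so nothing has to change.

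There is essentially no obstacle here — both inequalities follow by one application of first-order convexity to a shifted function, together with the algebraic identity expanding $\|x-y\|^2$. The only ingredient beyond Definition \ref{def:upper_lower_curvature} is the smoothness of $f$, which guarantees that the shifted functions are differentiable and lets us invoke the first-order (rather than subgradient) form of convexity.
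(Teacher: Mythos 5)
Your proof is correct and follows essentially the same route as the paper: apply the first-order convexity inequality to the shifted functions $\tfrac{L}{2}\|\cdot\|^2-f$ and $f-\tfrac{\mu}{2}\|\cdot\|^2$ from Definition \ref{def:upper_lower_curvature} and simplify. The only difference is that you spell out the algebraic collapse of the quadratic terms, which the paper leaves implicit.
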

\begin{proof}
The bounds result by inserting the following known inequality of smooth convex functions,
\begin{align*}
    f(x) {} \geq {} f(y) + \left \langle \nabla f(y), x-y \right \rangle, \, \forall x,y \in \mathbb{R}^d,
\end{align*}
in Definition \ref{def:upper_lower_curvature} of curvatures.
\end{proof}
\begin{lemma}\label{lemma:hypconvex_mu_L_interpolable} Consider a set $\big\{\big(x_i, g_i, f_i\big)\big\}_{i \in \mathcal{I}}$. For any constants $\mu$, $L$ with $\mu \in \big(-\infty\,,\, L\big]$, $0 < L {}\leq{} \infty$, the following propositions are equivalent:
\begin{enumerate}
    \item $\big\{\big(x_i, g_i, f_i\big)\big\}_{i \in \mathcal{I}}$ if $\mathcal{F}_{\mu, L}$-interpolable,
    \item $\Big\{\big(x_i, g_i - \mu x_i, f_i - \frac{\mu}{2} \|x_i\|^2 \big)\Big\}_{i \in \mathcal{I}}$ if $\mathcal{F}_{0, L-\mu}$-interpolable.
\end{enumerate}
\end{lemma}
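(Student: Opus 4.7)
The lemma expresses an elementary shift of curvature: subtracting the quadratic $\tfrac{\mu}{2}\|\cdot\|^2$ from an $\mathcal{F}_{\mu,L}$-function should yield an $\mathcal{F}_{0,L-\mu}$-function, and the triplet transformation merely tracks the corresponding changes in the function values and gradients. The strategy is to prove the two implications separately using this direct correspondence, with Definition~\ref{def:upper_lower_curvature} as the only conceptual ingredient.

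For the implication $(1)\Rightarrow(2)$, I would start from an interpolating $f\in\mathcal{F}_{\mu,L}$ with $\nabla f(x_i)=g_i$ and $f(x_i)=f_i$, and define
\begin{equation*}
    \tilde f(x) := f(x) - \tfrac{\mu}{2}\|x\|^2.
\end{equation*}
A direct differentiation gives $\nabla\tilde f(x_i) = g_i-\mu x_i$ and $\tilde f(x_i) = f_i-\tfrac{\mu}{2}\|x_i\|^2$, matching the shifted triplets exactly. It remains to check that $\tilde f\in\mathcal{F}_{0,L-\mu}$: its lower curvature is $0$ because $\tilde f - 0 = f-\tfrac{\mu}{2}\|\cdot\|^2$ is convex by the lower-curvature part of Definition~\ref{def:upper_lower_curvature} applied to $f$; its upper curvature is $L-\mu$ because $\tfrac{L-\mu}{2}\|\cdot\|^2-\tilde f = \tfrac{L}{2}\|\cdot\|^2-f$ is convex by the upper-curvature part of the same definition applied to $f$.

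The reverse implication $(2)\Rightarrow(1)$ is symmetric. Given $\tilde f\in\mathcal{F}_{0,L-\mu}$ interpolating the shifted triplets, I would set $f(x):=\tilde f(x)+\tfrac{\mu}{2}\|x\|^2$, verify by direct computation that the original triplets $(x_i,g_i,f_i)$ are recovered, and establish $f\in\mathcal{F}_{\mu,L}$ using the same two identities in reverse: $f-\tfrac{\mu}{2}\|\cdot\|^2=\tilde f$ is convex (lower curvature $\mu$), and $\tfrac{L}{2}\|\cdot\|^2-f=\tfrac{L-\mu}{2}\|\cdot\|^2-\tilde f$ is convex (upper curvature $L$).

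Since the class shift $f\mapsto f-\tfrac{\mu}{2}\|\cdot\|^2$ and the triplet shift $(x_i,g_i,f_i)\mapsto(x_i,g_i-\mu x_i,f_i-\tfrac{\mu}{2}\|x_i\|^2)$ are mutually inverse bijections, there is no real obstacle in the argument; it is careful bookkeeping of definitions. The only mildly delicate point is the edge case $L=\infty$ permitted in the statement, where the upper-curvature conditions become vacuous on both sides and the equivalence reduces to the lower-curvature shift alone. Lemma~\ref{lemma:hypoconvex_smooth_quad_bounds} plays no role in this proof; it will instead be needed when this shift is composed with the known $\mathcal{F}_{0,L-\mu}$-interpolation result of \cite{taylor_smooth_2017} to derive the explicit inequalities \eqref{eq:Interp_hypoconvex} of Theorem~\ref{thm:interp_hypo_characterization_min}.
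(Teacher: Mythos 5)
Your argument is correct and takes essentially the same route as the paper: both rest on the mutually inverse curvature shifts $f \mapsto f - \tfrac{\mu}{2}\|\cdot\|^2$ and $(x_i,g_i,f_i)\mapsto(x_i,g_i-\mu x_i,f_i-\tfrac{\mu}{2}\|x_i\|^2)$. The only difference is presentational — you check membership in $\mathcal{F}_{0,L-\mu}$ (and back in $\mathcal{F}_{\mu,L}$) directly from the convexity statements of Definition~\ref{def:upper_lower_curvature}, while the paper rearranges the quadratic bounds of Lemma~\ref{lemma:hypoconvex_smooth_quad_bounds}; your explicit two-way verification of the triplet correspondence is, if anything, slightly more careful.
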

\begin{proof}
This lemma is a direct extension of \cite[Lemma1]{taylor_smooth_2017} in the sense of including the hypoconvex functions, i.e., $\mu<0$. The idea is to use minimal curvature subtraction and write Lemma \ref{lemma:hypoconvex_smooth_quad_bounds} for  $h(x):=f(x)-\frac{\mu}{2} \|x\|^2$. Then $\nabla h(x)=\nabla f(x)-\mu x$ and from \eqref{eq:def_hypoconvex_smooth} we have the following equivalent inequalities:
{\normalsize
\begin{align*}
    \frac{\mu}{2} \|x-y\|^2 {}\leq{}& h(x) - h(y) - \langle \nabla h(y), x-y \rangle {}\leq{} \frac{L}{2} \|x-y\|^2 \nonumber \\
    % & \frac{\mu}{2} \|x-y\|^2 {}\leq{} \big(f(x)-\frac{\mu}{2} \|x\|^2\big) - \big(f(y)-\frac{\mu}{2} \|y\|^2\big) - \langle \nabla \big(f(y)-\mu y\big), x-y \rangle {}\leq{} \frac{L}{2} \|x-y\|^2 \nonumber \\
    \frac{\mu}{2} \|x-y\|^2 {}\leq{}& f(x) - f(y)- \langle \nabla f(y), x-y \rangle + \frac{\mu}{2} \|x-y\|^2 {}\leq{} \frac{L}{2} \|x-y\|^2 \nonumber \\
    0 {}\leq{}& f(x) - f(y)- \langle \nabla f(y), x-y \rangle {}\leq{} \frac{L-\mu}{2} \|x-y\|^2
\end{align*}
}%
Hence, $\big\{\big(x_i, g_i, f_i\big)\big\}_{i \in \mathcal{I}}$ is $\mathcal{F}_{\mu,L}$-interpolable if and only if $\Big\{\big(x_i, g_i-\mu x, f_i-\tfrac{\mu}{2}\|x_i\|^2\big)\Big\}_{i \in \mathcal{I}}$ is $\mathcal{F}_{0, L-\mu}$-interpolable. 
\end{proof}
From \cite{taylor_smooth_2017} we recall the following results.
\begin{theorem}[\cite{taylor_smooth_2017}, Theorem 1] \label{thm:taylor_cvx_interp} 
\textbf{(Convex interpolation)} 
The set $\big\{\big(x_i, g_i, f_i\big)\big\}_{i \in \mathcal{I}}$ is $\mathcal{F}_{0,\infty}$-interpolable if and only if 
{\normalsize
\begin{align*}
    f_i - f_j - \langle g_j, x_i-x_j \rangle {}\geq{} 0 \quad i,j\in \mathcal{I}.
\end{align*}
}%
\end{theorem}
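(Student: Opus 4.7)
The plan is to prove the two directions of this equivalence separately, using one of the most classical characterizations of convexity for the forward direction and an explicit interpolating construction (the maximum of affine minorants) for the backward direction.

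For the necessity direction, I would start by assuming there exists a convex function $f$ with $f(x_i)=f_i$ and $g_i \in \partial f(x_i)$ for all $i \in \mathcal{I}$. Since $f$ is convex, every subgradient at $x_j$ defines a supporting (affine) minorant of $f$, i.e., $f(x) \geq f(x_j) + \langle g_j, x - x_j\rangle$ for all $x \in \mathbb{R}^d$. Evaluating this at $x=x_i$ yields exactly $f_i - f_j - \langle g_j, x_i - x_j\rangle \geq 0$, which is the stated inequality. This is a one-line argument once the first-order convexity inequality is invoked.

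For the sufficiency direction, I would propose the explicit candidate function
\begin{equation*}
    f(x) := \sup_{j \in \mathcal{I}} \big\{ f_j + \langle g_j, x - x_j\rangle\big\},
\end{equation*}
which is convex (indeed closed and proper) as the supremum of a family of affine functions. The verification then has two parts. First, I would show that $f(x_i) = f_i$ for every $i \in \mathcal{I}$: the lower bound $f(x_i) \geq f_i$ comes from taking $j=i$ in the supremum, and the upper bound $f(x_i) \leq f_i$ comes directly from the hypothesized inequality $f_j + \langle g_j, x_i - x_j\rangle \leq f_i$ applied to every $j$. Second, I would verify that $g_i \in \partial f(x_i)$: since the affine function $x \mapsto f_i + \langle g_i, x - x_i\rangle$ is one of the functions in the supremum and coincides with $f$ at the point $x_i$, it is a supporting affine minorant of $f$ at $x_i$, so $g_i$ is indeed a subgradient there.

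The main obstacle (though minor) will be in identifying the right notion of ``gradient'' to use given that $\mathcal{F}_{0,\infty}$ corresponds to convex functions with no upper curvature bound, which are not necessarily differentiable; this is resolved by reading $g_i$ as a subgradient, consistent with \cite{taylor_smooth_2017}. If one instead insists on a differentiable interpolating function (still in $\mathcal{F}_{0,\infty}$), an alternative construction using an infimal convolution with a small quadratic could be used, but I do not anticipate this being required here. Apart from that, the argument is essentially a straightforward verification once the candidate $f$ is written down, which is the only nontrivial step.
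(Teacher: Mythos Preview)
Your argument is correct and is the standard proof of convex interpolation. Note, however, that the present paper does not actually prove this statement: it is merely recalled from \cite{taylor_smooth_2017} (Theorem~1) as an auxiliary result used in the proof of Theorem~\ref{thm:interp_hypo_characterization_min}, so there is no ``paper's own proof'' to compare against. Your construction via the pointwise maximum of affine minorants is precisely the classical one used in that reference, and your handling of the subgradient interpretation for $\mathcal{F}_{0,\infty}$ is appropriate.
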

\begin{lemma}[\cite{taylor_smooth_2017}, Lemma 2]\label{lemma:convex_L_conjugate}
Consider a set $\left\{\big(x_{i}, g_{i}, f_{i}\big)\right\}_{i \in I}$.
The following propositions are equivalent $\forall L: 0<L {}\leq{}+\infty$:
\begin{enumerate}
    \item $\big\{\big(x_{i}, g_{i}, f_{i}\big)\big\}_{i \in I}$ is $\mathcal{F}_{0, L}$-interpolable,
    \item $\Big\{\big(g_{i}, x_{i},\left\langle g_{i}, x_{i}\right\rangle-f_{i}\big)\Big\}_{i \in I}$ is $\mathcal{F}_{1 / L, \infty}$-interpolable.
\end{enumerate}
\end{lemma}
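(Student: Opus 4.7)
The natural route is Fenchel--Legendre conjugation, exploiting two classical facts. First, the \emph{curvature swap}: a (closed proper) convex function $f$ is $L$-smooth (equivalently, has upper curvature $L$) if and only if its Fenchel conjugate $f^\ast$ is $(1/L)$-strongly convex (equivalently, has lower curvature $1/L$). Second, the \emph{gradient inversion / Fenchel--Young equality}: whenever $g=\nabla f(x)$ one has
\[
    f^\ast(g) = \langle g,x\rangle - f(x), \qquad \nabla f^\ast(g) = x,
\]
where the second identity follows because strong convexity of $f^\ast$ forces $\partial f^\ast(g)$ to be the singleton $\{x\}$. Together with the involution property $f^{\ast\ast}=f$ on closed proper convex functions, these facts will establish the equivalence in a fully symmetric way.

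\emph{Direction} $(1)\Rightarrow(2)$. Take any $f\in\mathcal F_{0,L}$ interpolating $\{(x_i,g_i,f_i)\}_{i\in I}$, so $\nabla f(x_i)=g_i$ and $f(x_i)=f_i$. Set $\tilde f := f^\ast$. The curvature-swap fact gives $\tilde f\in\mathcal F_{1/L,\infty}$ (lower curvature $1/L$; no upper curvature required). Applying Fenchel--Young equality at each index yields $\tilde f(g_i)=\langle g_i,x_i\rangle-f_i$ and $\nabla\tilde f(g_i)=x_i$, which is exactly the statement that $\tilde f$ interpolates $\{(g_i,x_i,\langle g_i,x_i\rangle-f_i)\}_{i\in I}$.

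\emph{Direction} $(2)\Rightarrow(1)$. By symmetry, let $\tilde f\in\mathcal F_{1/L,\infty}$ interpolate $\{(g_i,x_i,\langle g_i,x_i\rangle-f_i)\}$, and put $f:=\tilde f^\ast$. The curvature-swap in the reverse direction gives that $f$ is convex and $L$-smooth, i.e., $f\in\mathcal F_{0,L}$. Fenchel--Young equality at the pair $(g_i,x_i)$ then yields $\nabla f(x_i)=g_i$ and $f(x_i)=\langle x_i,g_i\rangle-(\langle g_i,x_i\rangle-f_i)=f_i$, so $f$ interpolates the original set. The biconjugation identity $\tilde f^{\ast\ast}=\tilde f$ guarantees that the two constructions are mutual inverses.

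\emph{Main obstacle.} The only delicate point is the curvature-swap itself. The direction ``smooth convex $\Rightarrow$ strongly convex conjugate'' is essentially the Baillon--Haddad theorem (cocoercivity of $\nabla f$ translates into strong monotonicity of $\partial f^\ast$), while the reverse direction is classical Moreau--Rockafellar theory. Care must also be taken with the corner case $L=+\infty$, where $\mathcal F_{0,\infty}$ and $\mathcal F_{0,\infty}$ are both simply the class of closed proper convex functions and the equivalence reduces to the standard convex-interpolation statement of Theorem~\ref{thm:taylor_cvx_interp}. One further technicality: the conjugate $\tilde f$ in statement (2) need not be real-valued on all of $\mathbb R^d$, but this is irrelevant for interpolability, which only constrains $\tilde f$ at the finite set of points $\{g_i\}_{i\in I}$ where strong convexity guarantees differentiability with the claimed gradients.
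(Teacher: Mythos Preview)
The paper does not prove this lemma; it is merely recalled from \cite[Lemma~2]{taylor_smooth_2017} and used as a black box in the proof of Theorem~\ref{thm:interp_hypo_characterization_min}. Your Fenchel-conjugation argument is exactly the route taken in that reference, so in spirit you are reproducing the original proof rather than deviating from anything in the present paper.

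Your argument is essentially correct, but one point deserves a cleaner treatment. In direction $(1)\Rightarrow(2)$ you produce $\tilde f=f^\ast$, and you note yourself that $\tilde f$ need not be real-valued on all of $\mathbb R^d$ (take for instance the Huber function: $L$-smooth convex, yet its conjugate has bounded domain). Since the paper defines $\mathcal F_{\mu,L}(\mathbb R^d)$ as a class of functions $f:\mathbb R^d\to\mathbb R$, the object $\tilde f$ you construct is not, strictly speaking, an element of $\mathcal F_{1/L,\infty}$. Saying this is ``irrelevant for interpolability'' is not quite a proof: you still owe the reader an interpolant that is finite everywhere. The standard fix (and the one in \cite{taylor_smooth_2017}) is to observe that once the finite set $\{(g_i,x_i,\langle g_i,x_i\rangle-f_i)\}$ satisfies the $\mathcal F_{1/L,\infty}$ interpolation inequalities, one can invoke Lemma~\ref{lemma:hypconvex_mu_L_interpolable} and Theorem~\ref{thm:taylor_cvx_interp} to build a globally finite interpolant directly, rather than insisting on $f^\ast$ itself. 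In the reverse direction $(2)\Rightarrow(1)$ there is no such issue: $\tilde f\in\mathcal F_{1/L,\infty}$ is by definition real-valued and $1/L$-strongly convex on $\mathbb R^d$, hence supercoercive, so $\tilde f^\ast$ is automatically finite everywhere and your argument goes through as written.
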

We prove now \textbf{Theorem \ref{thm:interp_hypo_characterization_min}}, based on the same steps as the one of \cite[Theorem 4]{taylor_smooth_2017}, which is more detailed.
\begin{proof}[Proof of Theorem \ref{thm:interp_hypo_characterization_min}]\label{thm_proof:interp_hypo_characterization_min}
We divide it in two parts: (i) proving the interpolation conditions and (ii) proving the characterization of the global minimum $f_*$ \eqref{eq:Charact_f*}.

\paragraph{(i) Proof of the \textbf{interpolation conditions} \eqref{eq:Interp_hypoconvex}.} 

Taylor et. al. state in \cite[Theorem 4]{taylor_smooth_2017} the interpolation conditions of smooth strongly-convex functions $f \in \mathcal{F}_{\mu,L}$, with $\mu>0$. The inequalities from Lemma \ref{lemma:hypoconvex_smooth_quad_bounds} are valid for any finite $\mu {}\leq{} L$, therefore we have the same series of equivalences from \cite[Theorem 4]{taylor_smooth_2017}:

\begin{enumerate}[label=(\alph*)]
    \item $\Big\{\big(x_{i}\, , \, g_{i}\, , \, f_{i}\big)\Big\}_{i \in I}$ is $\mathcal{F}_{\mu, L}$-interpolable,
    
    \item $\Big\{\big(x_{i}\, , \, g_{i}-\mu  x_{i}\, , \, f_{i}-\frac{\mu}{2}\|x_{i}\|^{2}\big)\Big\}_{i \in I}$ is $\mathcal{F}_{0, L-\mu}$-interpolable,
    
    \item $\Big\{\big(g_{i}-\mu  x_{i}\, , \, x_{i}\, , \,\langle g_{i}, x_{i}\rangle-f_{i}-\frac{\mu}{2}\|x_{i}\|^{2}\big)\Big\}_{i \in I}$ is $\mathcal{F}_{1 /(L-\mu), \infty}$-interpolable,

    \item $\Big\{\Big(g_{i}-\mu  x_{i}\, , \, \frac{L x_{i}}{L-\mu}-\frac{g_{i}}{L-\mu}\, , \, \frac{L\langle g_{i}, x_{i}\rangle}{L-\mu}-f_{i}-\frac{\mu L\|x_{i}\|^{2}}{2(L-\mu)}-\frac{\|g_{i}\|^{2}}{2(L-\mu)}\Big)\Big\}_{i \in I}$
is $\mathcal{F}_{0, \infty}$-interpolable,
    
    \item $\Big\{\Big(\frac{L x_{i}}{L-\mu}-\frac{g_{i}}{L-\mu}\, , \, g_{i}-\mu x_{i}\, , \, \frac{\mu \langle g_{i}, x_{i}\rangle}{L-\mu}+f_{i}-\frac{\mu L\|x_{i}\|^{2}}{2(L-\mu)}-\frac{\|g_{i}\|^{2}}{2(L-\mu)}\Big)\Big\}_{i \in I}$
is $\mathcal{F}_{0, \infty}$-interpolable.
\end{enumerate}
The equivalences: (a) $\Leftrightarrow$ (b) and (c) $\Leftrightarrow$ (d) come from direct applications of Lemma \ref{lemma:hypconvex_mu_L_interpolable}. The equivalences: (b) $\Leftrightarrow$ (c) and (d) $\Leftrightarrow$ (e) come from direct applications of Lemma \ref{lemma:convex_L_conjugate}. Therefore, (a) and (e) are equivalent. Consequently, the interpolation conditions \eqref{eq:Interp_hypoconvex} follow directly by applying Theorem \ref{thm:taylor_cvx_interp} on the $\mathcal{F}_{0,\infty}$-interpolable set (e).

\paragraph{(ii) Proof of \textbf{optimal point characterization} \eqref{eq:Charact_f*}.} 
We adapt the steps from the proof of \cite[Theorem 7]{drori2021complexity} on smooth nonconvex functions to arbitrary lower curvature $\mu$. Consider the function
{\normalsize
\begin{align*}
    \begin{aligned}
        Z(y) {}:={} \min_{\alpha \in \Delta_{\mathcal{I}}} 
        &\Bigg\{
            \tfrac{L-\mu}{2} 
            \big\|
            y - \sum\limits_{i \in \mathcal{I}} \alpha_i
             \big[x_i - \tfrac{1}{L-\mu} \big(g_i - \mu x_i \big) \big] 
            \big\|^2 + 
            % \right. \\ & \left. 
            \\ & \qquad
            \sum\limits_{i \in \mathcal{I}} \alpha_i
             \Big( f_i - \tfrac{\mu}{2} \|x_i\|^2 - \tfrac{1}{2(L-\mu)} \|g_i-\mu x_i\|^2 \Big)
        \Bigg\}
    \end{aligned}
\end{align*}
}%
where $\Delta_{\mathcal{I}}$ is the $n$ dimensional unit simplex (with $n:=|\mathcal{I}|$):
\begin{align*}
    \Delta_{{\mathcal{I}}} := \Big\{ {\alpha} \in \mathbb{R}^n: \sum\limits_{i\in\mathcal{I}} \alpha_i = 1, \alpha_i {}\geq{} 0, \forall i \in 0,\dots,n-1 \Big\}.
\end{align*}
The function $Z$ is the primal interpolation function $W_{\mathcal{T}}^{C}$ as defined in \cite[Definition 2.1]{drori2018_primal_interp_func}. One can check this by replacing in the definition $C \leftarrow \{0\}$, $L \leftarrow L-\mu$ and 
\begin{align*}
    \mathcal{T} \leftarrow \Big\{\big(x_i, g_i-\mu x_i, f_i - \tfrac{\mu}{2} \|x_i\|^2\big) \Big\}_{i \in \mathcal{I}}.
\end{align*}
The set $\mathcal{T}$ is $\mathcal{F}_{0, L-\mu}$-interpolable. Hence, from \cite[Theorem 1]{drori2018_primal_interp_func} it follows that $Z$ is convex, with upper curvature $L-\mu$, and satisfies
{\normalsize
\begin{align*}
    \begin{aligned}
        Z(x_i) = f_i - \frac{\mu}{2} \|x_i\|^2 \quad \text { and } \quad
        \nabla Z(x_i) = g_i - \mu x_i.
    \end{aligned}
\end{align*}
}%
We proceed to define the function
{\normalsize
\begin{align*}
    \hat{W}(y) := Z(y) + \frac{\mu}{2} \|y\|^2,
\end{align*}
}%
which belongs to the function class $\mathcal{F}_{\mu,L}$ (see Lemma \ref{lemma:hypconvex_mu_L_interpolable}) and satisfies 
{\normalsize
\begin{align*}
    \begin{aligned}
        \hat{W}(x_i) = f_i \quad \text { and } \quad
        \nabla \hat{W}(x_i) = g_i.
    \end{aligned}
\end{align*}
}%
Using algebraic manipulations, we can express $\hat{W}$ as
{\normalsize
\begin{align}\label{eq:hat_W_equiv}
\begin{aligned}
    \hat{W}(y) {}={} \min_{\alpha \in \Delta_{\mathcal{I}}} 
            &\Bigg\{
                \tfrac{L}{2}
                \big\|
                y - \sum\limits_{i \in \mathcal{I}} \alpha_i
                 \big(x_i - \tfrac{1}{L}g_i \big) 
                \big\|^2 + 
                \tfrac{L}{2}\tfrac{\kappa}{1-\kappa} \big\|\sum\limits_{i \in \mathcal{I}} \alpha_i
                 \big(x_i - \tfrac{1}{L}g_i \big) \big\|^2 +
                \\ & \qquad 
                \sum\limits_{i \in \mathcal{I}} \alpha_i
                 \Big( f_i - \tfrac{1}{2L} \|g_i\|^2 - \tfrac{L}{2} \tfrac{\kappa}{1-\kappa} \big\|x_i - \tfrac{1}{L} g_i \big\|^2 \Big)
            \Bigg\}.
\end{aligned}            
\end{align}
}%
Further, we \textit{lower bound} the first squared norm by $0$ and use the convexity of the squared norm for the second inequality:
{\normalsize
\begin{align}\label{eq:W_hat_upper_bound}
    \begin{aligned}
        \hat{W}(y) {}{}\geq{}{} & 
            \min_{\alpha \in \Delta_{\mathcal{I}}} 
            \Bigg\{
                \tfrac{L}{2}\tfrac{\kappa}{1-\kappa} \big\|\sum\limits_{i \in \mathcal{I}} \alpha_i
                 \big(x_i - \tfrac{1}{L}g_i \big) \big\|^2 +
                \sum\limits_{i \in \mathcal{I}} \alpha_i
                 \Big( f_i - \tfrac{1}{2L} \|g_i\|^2 - \tfrac{L}{2} \tfrac{\kappa}{1-\kappa} \big\|x_i - \tfrac{1}{L} g_i \big\|^2 \Big)
            \Bigg\} \\
            {}\geq{} & 
            \min_{\alpha \in \Delta_{\mathcal{I}}} 
            \Bigg\{
                \tfrac{L}{2}\tfrac{\kappa}{1-\kappa} \sum\limits_{i \in \mathcal{I}} \alpha_i\big\|
                 x_i - \tfrac{1}{L}g_i \big\|^2 +
                \sum\limits_{i \in \mathcal{I}} \alpha_i
                 \Big( f_i - \tfrac{1}{2L} \|g_i\|^2 - \tfrac{L}{2} \tfrac{\kappa}{1-\kappa} \big\|x_i - \tfrac{1}{L} g_i \big\|^2 \Big)
            \Bigg\} \\
            {}={}&
            \min_{\alpha \in \Delta_{\mathcal{I}}} 
            \Bigg\{
                \sum\limits_{i \in \mathcal{I}} \alpha_i
                 \Big( f_i - \tfrac{1}{2L} \|g_i\|^2 \Big)
            \Bigg\} \\
            {}={}&
            \min_{i \in \mathcal{I}}\,\,\,\,
                 \Big\{ f_i - \tfrac{1}{2L} \|g_i\|^2 \Big\} \\
            {}={}&
            f_{i_*} - \tfrac{1}{2L} \|g_{i_*}\|^2.
    \end{aligned}
\end{align}
}%
For the \textit{upper bound}, in \eqref{eq:hat_W_equiv} we take $y:=x_{i_*}-\tfrac{1}{L}g_{i_*}$  and ${\alpha}={e}_{i_*}$ (the ${i_*}$-th unit vector):
{\normalsize
\begin{align}\label{eq:hat_W_lower_bound}
\begin{aligned}
    \hat{W}\big(x_{i_*}-\tfrac{1}{L}g_{i_*}\big) 
                {}&\leq{}
                \tfrac{L}{2}\tfrac{\kappa}{1-\kappa} \big\|x_{i_*} - \tfrac{1}{L}g_{i_*} \big\|^2 +
                 f_{i_*} - \tfrac{1}{2L} \|g_{i_*}\|^2 - \tfrac{L}{2} \tfrac{\kappa}{1-\kappa} \big\|x_{i_*} - \tfrac{1}{L} g_{i_*} \big\|^2 \\
                 {}&={}
                 f_{i_*} - \tfrac{1}{2L} \|g_{i_*}\|^2.
\end{aligned}            
\end{align}
}%
Therefore, from \eqref{eq:W_hat_upper_bound} and \eqref{eq:hat_W_lower_bound} it follows
$\hat{W}\big(x_{i_*}-\tfrac{1}{L}g_{i_*}\big) = f_{i_*} - \tfrac{1}{2L} \|g_{i_*}\|^2$.
\end{proof}
    	
    	%% A3 -- Solving the PEPs & discussion
    	\clearpage
        \section{Solving the PEP}\label{appendix:Solve_PEP}
            %!TEX root = ../../main.tex
% \clearpage
% \subsection{Solving the PEP}\label{appendix:Solve_PEP}
Based on Theorem \ref{thm:interp_hypo_characterization_min}, the infinite-dimensional PEP \eqref{eq:PEP_general_inf_dim} is transformed into one of the finite-dimensional PEPs \eqref{eq:PEP_finite_dimensional_no_opt} or \eqref{eq:PEP_finite_dimensional}, where $\mathcal{I}=\{0,\dots,N\}$ and $\mathcal{I}^*=\{0,\dots,N,*\}$, respectively. The two finite-dimensional PEPs correspond to the different initial conditions: $f_0 - f_N \leq \Delta$ and $f_0 - f_* \leq \Delta$, respectively. The procedure of transforming the original PEP into a tractable version is similar for both formulations.

\paragraph{Initial condition $f_0 - f_N \leq \Delta$.} In this case, the discretized PEP is
\begin{align}\label{eq:PEP_finite_dimensional_no_opt}
    \begin{aligned}
        \maximize_{\{(x_i,g_i,f_i)\}_{i \in \mathcal{I}}} \quad & \min _{0 \leq i \leq N} \big\{\left\|g_{i}\right\|^2\big\} \\
        \stt \quad & \big\{(x_i,g_i,f_i)\big\}_{i \in \mathcal{I}} \text{  satisfy \eqref{eq:Interp_hypoconvex} } \\
        \quad & x_{i+1}=x_{i}-\tfrac{h_{i}}{L} g_{i} , \,\, i \in \overline{0,N-1} \\
        \quad & f_{0}-f_{N} \leq \Delta.
    \end{aligned}
\end{align}

%Proposition \ref{prop:PEP_equiv_no_opt} shows that the problems \eqref{eq:PEP_general_inf_dim} with initial condition $f_0 - f_N \leq \Delta$ and \eqref{eq:PEP_finite_dimensional_no_opt} share the same optimal values. 

\begin{proposition}\label{prop:PEP_equiv_no_opt}
The solutions of the optimization problems \eqref{eq:PEP_general_inf_dim} (with initial condition $f_0 - f_N \leq \Delta$) and \eqref{eq:PEP_finite_dimensional_no_opt} are the same.
\end{proposition}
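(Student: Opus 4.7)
The plan is to establish the equivalence by double inclusion at the level of optimal values: show that \eqref{eq:PEP_finite_dimensional_no_opt} is a valid relaxation of \eqref{eq:PEP_general_inf_dim}, and then show that any feasible point of \eqref{eq:PEP_finite_dimensional_no_opt} can be lifted back to a feasible point of \eqref{eq:PEP_general_inf_dim} with the same objective. Note that, unlike the companion result for the initial condition $f_0 - f_* \leq \Delta$, this proposition does not mention $f_*$ anywhere, so only the interpolation half of Theorem \ref{thm:interp_hypo_characterization_min} will be needed; the characterization of the global minimum is not required here.

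For the first direction ($\text{val}\eqref{eq:PEP_general_inf_dim} \le \text{val}\eqref{eq:PEP_finite_dimensional_no_opt}$), I would take any feasible pair $(f, x_0)$ of \eqref{eq:PEP_general_inf_dim}, generate the iterates $x_{i+1} = x_i - \tfrac{h_i}{L}\nabla f(x_i)$, and form the triplets $(x_i, \nabla f(x_i), f(x_i))_{i \in \mathcal{I}}$. These triplets are $\mathcal{F}_{\mu,L}$-interpolable by construction (they come from a bona fide $f \in \mathcal{F}_{\mu,L}$), so by Theorem \ref{thm:interp_hypo_characterization_min} they satisfy the pairwise inequalities \eqref{eq:Interp_hypoconvex}. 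The step equations and the initial condition $f_0 - f_N \le \Delta$ are carried over verbatim, and the objective $\min_{0 \le i \le N}\|g_i\|^2$ coincides with $\min_{0 \le i \le N}\|\nabla f(x_i)\|^2$. Hence every feasible point of the infinite-dimensional problem produces a feasible point of the finite-dimensional one with the same objective value.

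For the reverse direction ($\text{val}\eqref{eq:PEP_general_inf_dim} \ge \text{val}\eqref{eq:PEP_finite_dimensional_no_opt}$), I would start from any feasible triplet family $\{(x_i, g_i, f_i)\}_{i \in \mathcal{I}}$ of \eqref{eq:PEP_finite_dimensional_no_opt}. By the sufficiency part of Theorem \ref{thm:interp_hypo_characterization_min}, the inequalities \eqref{eq:Interp_hypoconvex} guarantee the existence of some $f \in \mathcal{F}_{\mu,L}(\mathbb{R}^d)$ with $\nabla f(x_i) = g_i$ and $f(x_i) = f_i$ for all $i \in \mathcal{I}$. Taking this $f$ together with $x_0$ as the starting point, the gradient method \eqref{eq:GM_it_fixed_steps} produces iterates that by induction agree with the prescribed $x_i$: the base case is $x_0 = x_0$, and the inductive step follows since $\nabla f(x_i) = g_i$ implies $x_i - \tfrac{h_i}{L}\nabla f(x_i) = x_i - \tfrac{h_i}{L} g_i = x_{i+1}$. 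The initial condition $f(x_0) - f(x_N) = f_0 - f_N \le \Delta$ and the performance measure $\min_i \|\nabla f(x_i)\|^2 = \min_i \|g_i\|^2$ are then matched exactly.

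The only subtle point I foresee is the handling of the ambient dimension $d$: the interpolating function delivered by Theorem \ref{thm:interp_hypo_characterization_min} lives in whatever $\mathbb{R}^d$ contains the $x_i$'s, so one should either take a common supremum over dimensions or note that in the gradient method the iterates always lie in the affine span of at most $N+1$ vectors, so $d=N+1$ is enough to realize any finite-dimensional solution. Beyond this standard remark, the argument is a direct bookkeeping check: the two directions yield the same optimal value, which establishes the proposition.
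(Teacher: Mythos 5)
Your argument is correct and follows essentially the same route as the paper's own proof: one direction is the observation that \eqref{eq:PEP_finite_dimensional_no_opt} is a relaxation of \eqref{eq:PEP_general_inf_dim}, and the other lifts any feasible triplet family back to a function in $\mathcal{F}_{\mu,L}$ via the interpolation conditions of Theorem \ref{thm:interp_hypo_characterization_min}, with the iterates, initial condition and objective matching. You merely spell out the bookkeeping (and the dimension remark) that the paper leaves implicit, and you correctly note that only the interpolation half of Theorem \ref{thm:interp_hypo_characterization_min} is needed here.
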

\begin{proof}
    Problem \eqref{eq:PEP_finite_dimensional_no_opt} is a relaxation of the PEP \eqref{eq:PEP_general_inf_dim}. Therefore it is sufficient to show that for any feasible solution of \eqref{eq:PEP_finite_dimensional_no_opt},  $\bar{\mathcal{T}}=\big\{(\bar{x}_i,\bar{g}_i,\bar{f}_i)\big\}_{i \in \mathcal{I}}$, there exists a smooth hypoconvex function $f \in \mathcal{F}_{\mu,L}$, with $L>0$, $\mu \in \big(-\infty, 0\big]$, such that $f(\bar{x}_i)=f_i$, $\nabla (\bar{x}_i)=g_i$, $\forall i \in \mathcal{I}$. Since $\bar{\mathcal{T}}$ is a feasible point of problem \eqref{eq:PEP_finite_dimensional_no_opt}, the assumptions of Theorem \ref{thm:interp_hypo_characterization_min} are valid, hence there exists such a function.
\end{proof}

\paragraph{Initial condition $f_0 - f_* \leq \Delta$.} In this case, the discretized PEP is
\begin{align}\label{eq:PEP_finite_dimensional}
    \hspace{-0.1cm}
    \begin{aligned}
        \maximize_{\{(x_i,g_i,f_i)\}_{i \in \mathcal{I^*}}} \quad & \min _{0 \leq i \leq N} \big\{\left\|g_{i}\right\|^2\big\} \\
        \stt \quad & \big\{(x_i,g_i,f_i)\big\}_{i \in \mathcal{I}^*} \text{  satisfy \eqref{eq:Interp_hypoconvex} } \\
        \quad & x_{i+1}=x_{i}-\tfrac{h_{i}}{L} g_{i}, \,\, i \in \overline{0,N-1} \\
        \quad & f_i- \tfrac{1}{2L}\|g_i\|^2 - f_*  \geq 0, i \in \,\, \overline{0,N} \\
        \quad & f_{0}-f_* \leq \Delta.
    \end{aligned}
\end{align}
% \todo[inline]{Revisit the next paragraph}
Note the extra condition characterizing the global minimum value $f_*$, i.e., $f_i- \tfrac{1}{2L}\|g_i\|^2 - f_* \geq 0$. This descent-lemma type condition is necessary in order to properly upper bound $f_*$ with respect to the iterates. Moreover, from the second part of Theorem \ref{thm:interp_hypo_characterization_min} we have that there exists a function $f$ whose global minimum is attained by performing a gradient step \eqref{eq:GM_it_fixed_steps} with $h=1$ from one of the iterates. 
\begin{proposition}\label{prop:PEP_equiv}
The solutions of the optimization problems \eqref{eq:PEP_general_inf_dim} (with initial condition $f_0-f_* \leq \Delta$) and \eqref{eq:PEP_finite_dimensional} are the same.
\end{proposition}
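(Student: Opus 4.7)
The plan is to mirror the argument used for Proposition \ref{prop:PEP_equiv_no_opt}, but carefully handling the additional descent-type constraints $f_i - \tfrac{1}{2L}\|g_i\|^2 \geq f_*$ and the presence of the explicit optimal triplet $(x_*,g_*,f_*)$. Since \eqref{eq:PEP_finite_dimensional} is obtained from \eqref{eq:PEP_general_inf_dim} by keeping only a finite set of conditions that any $f \in \mathcal{F}_{\mu,L}$ must satisfy on the iterates, it is a relaxation; establishing equality thus reduces to showing that every feasible point $\bar{\mathcal{T}} = \{(\bar{x}_i,\bar{g}_i,\bar{f}_i)\}_{i \in \mathcal{I}^*}$ of \eqref{eq:PEP_finite_dimensional} arises from an actual function in $\mathcal{F}_{\mu,L}$ whose global minimum is exactly $\bar{f}_*$ and whose iterates obey \eqref{eq:GM_it_fixed_steps}.

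For the easy direction (any feasible $(f,x_0)$ of the infinite-dimensional problem induces a feasible point of the finite-dimensional one), I would generate the iterates via the gradient method, set $f_i := f(x_i)$, $g_i := \nabla f(x_i)$, and append a global minimizer with $g_* = 0$. Interpolation \eqref{eq:Interp_hypoconvex} is automatic from $f \in \mathcal{F}_{\mu,L}$, and the descent-type inequality follows from the upper quadratic bound in Lemma \ref{lemma:hypoconvex_smooth_quad_bounds}: applying it to $y = x_i - \tfrac{1}{L}g_i$ yields $f(y) \leq f_i - \tfrac{1}{2L}\|g_i\|^2$, whence $f_* \leq f_i - \tfrac{1}{2L}\|g_i\|^2$. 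The remaining constraints of \eqref{eq:PEP_finite_dimensional} are satisfied by construction, and both PEPs share the same objective $\min_{0 \leq i \leq N} \|g_i\|^2$.

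For the backward direction, I would invoke Theorem \ref{thm:interp_hypo_characterization_min} applied to $\bar{\mathcal{T}}$. The interpolation conditions \eqref{eq:Interp_hypoconvex} hold by feasibility, so an interpolating $f \in \mathcal{F}_{\mu,L}$ exists; the second part of the theorem supplies such an $f$ whose global minimum value equals $\min_{i \in \mathcal{I}^*}\{\bar{f}_i - \tfrac{1}{2L}\|\bar{g}_i\|^2\}$. Including the starred triplet contributes $\bar{f}_* - \tfrac{1}{2L}\|\bar{g}_*\|^2 = \bar{f}_*$, and the explicit constraints $\bar{f}_i - \tfrac{1}{2L}\|\bar{g}_i\|^2 \geq \bar{f}_*$ for $i \in \{0,\dots,N\}$ force this minimum to be attained exactly at the $*$-entry. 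Hence the constructed $f$ has finite global minimum equal to $\bar{f}_*$, attained at the point prescribed by Theorem \ref{thm:interp_hypo_characterization_min}, and $(f,\bar{x}_0)$ is feasible for \eqref{eq:PEP_general_inf_dim} with identical objective value.

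The main subtlety I expect to encounter is precisely this coupling between the extra constraints $\bar{f}_i - \tfrac{1}{2L}\|\bar{g}_i\|^2 \geq \bar{f}_*$ and the characterization \eqref{eq:Charact_f*}: without the former, nothing would prevent an interpolant from having a genuinely smaller minimum than the declared $\bar{f}_*$, so the discretization would be a strict relaxation; and without the latter (i.e., relying only on pairwise interpolation with a spurious triplet $(x_*,0,f_*)$), one could not guarantee the existence of a function \emph{attaining} the value $\bar{f}_*$ at a global minimizer. Checking that these two ingredients together precisely reproduce the semantics of $f_0 - f_* \leq \Delta$ for a genuine hypoconvex $f$ is the delicate step, but it follows directly from the two-part statement of Theorem \ref{thm:interp_hypo_characterization_min}.
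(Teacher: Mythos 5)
Your proposal is correct and follows essentially the same route as the paper: observe that \eqref{eq:PEP_finite_dimensional} is a relaxation of \eqref{eq:PEP_general_inf_dim}, then use Theorem \ref{thm:interp_hypo_characterization_min} (interpolation conditions plus the characterization \eqref{eq:Charact_f*} of the minimum, combined with the constraints $\bar{f}_i - \tfrac{1}{2L}\|\bar{g}_i\|^2 \geq \bar{f}_*$ and the convention $g_*=0$) to turn any feasible point of the discretized problem into a genuine $f \in \mathcal{F}_{\mu,L}$ whose minimum is not below $\bar{f}_*$, so the two optimal values coincide. Your write-up is simply more detailed than the paper's (explicit forward direction via the descent-type bound, and exact attainment of $\bar f_*$), but the key ideas are identical.
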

\begin{proof}
    Problem \eqref{eq:PEP_finite_dimensional} is a relaxation of the PEP \eqref{eq:PEP_general_inf_dim}, therefore it is sufficient to show that for any feasible solution of \eqref{eq:PEP_finite_dimensional}, $\bar{\mathcal{T}}^*=\big\{(\bar{x}_i,\bar{g}_i,\bar{f}_i)\big\}_{i \in \mathcal{I}^*}$, there exists a smooth hypoconvex function $f \in \mathcal{F}_{\mu,L}$, with $L>0$, $\mu \in \big(-\infty, 0\big]$, such that $f(\bar{x}_i)=f_i$, $\nabla (\bar{x}_i)=g_i$, $\forall i \in \mathcal{I}$, and $\min_{x \in \mathbb{R}^d} f(x) \geq f_*$. Since $\bar{\mathcal{T}^*}$ is a feasible point of problem \eqref{eq:PEP_finite_dimensional}, the assumptions of Theorem \ref{thm:interp_hypo_characterization_min} are valid, hence there exists such a function.
\end{proof}

The programs \eqref{eq:PEP_finite_dimensional_no_opt} and \eqref{eq:PEP_finite_dimensional} are intractable; therefore we relax them by following the same steps as in \cite[Sections 3.2-3.3]{taylor_smooth_2017}. We detail the procedure on short. A tractable convex reformulation is obtained by using a Gram matrix to describe the iterates $x_i$ and the gradients $g_i$. Let
\begin{align*}
    P := 
    \left[
    \begin{array}{lllll}
         g_0 & g_1 & \dots & g_N & x_0
    \end{array}
    \right]
\end{align*}
and the symmetric $(N+2) \times (N+2)$ Gram matrix $G=P^T P$. The iterates of the gradient method $x_i$ can be expressed using the gradients and the initial value $x_0$: 
{\normalsize
\begin{align*}%\label{eq:x_i_expression}
    x_i=x_0 +\tfrac{1}{L}\sum\limits_{k=0}^{i-1} h_k g_k    .
\end{align*}
}%
One can replace the iterations $x_i$ in the interpolation conditions and reformulate all the constraints from the PEPs \eqref{eq:PEP_finite_dimensional_no_opt} and \eqref{eq:PEP_finite_dimensional} in terms of entries of $G$, along with function values $f_i$. Then the following tractable SDPs are obtained:

Convex relaxation of \eqref{eq:PEP_finite_dimensional_no_opt}:
\begin{align}\label{eq:PEP_SDP_no_opt}
    \begin{aligned}
        \maximize_{G,\,l,\,x_0,\,\{(g_i,f_i)\}_{ i \in \mathcal{I} }} \quad & l \\
        \stt \quad & f_i-f_j + \trace (A_{ij} G) \geq 0, \,\, i \neq j  \\
        %
        % \quad & \, \, \, \\
        \quad &  f_N - f_0 - \Delta \geq 0 \\
        \quad &  G_{ii} - l \geq 0, \, i \in \overline{0,N} \\
        \quad &  G \succeq 0
    \end{aligned}
\end{align}%

Convex relaxation of\eqref{eq:PEP_finite_dimensional}
\begin{align}\label{eq:PEP_SDP}
    \begin{aligned}
        \maximize_{G,\,l,\,x_0,\,\{(g_i,f_i)\}_{ i \in \mathcal{I}^* }} \quad & l \\
        \stt \quad & f_i-f_j + \trace (A_{ij} G) \geq 0, \,\, i \neq j  \\
        \quad &  f_i - \frac{1}{2L}G_{ii} - f_* \geq 0, \, i \in \overline{0,N} \\
        \quad &  f_* - f_0 - \Delta \geq 0 \\
        \quad &  G_{ii} - l \geq 0, i \in \overline{0,N} \\
        \quad &  G \succeq 0
    \end{aligned}
\end{align}%

The $A_{ij}$ matrices are formed according to the interpolation conditions. A detailed explanation of computing $A_{ij}$ and obtaining the SDP formulations is given in \cite[Section 3.3]{taylor_smooth_2017}. The relaxation is exact for $d \geq N+2$ (the large-scale setting), while for smaller dimensions one can introduce a nonconvex rank constraint; for more details, see \cite[Theorem 5]{taylor_smooth_2017}.

%A next standard step in the PEP methodology is to relax the finite-dimensional formulation \eqref{eq:PEP_finite_dimensional} to a tractable SDP by (i) eliminating $x_i$ from the interpolation conditions using the gradient method iterations \eqref{eq:GM_it_fixed_steps} and (ii) expressing the variables of \eqref{eq:PEP_finite_dimensional} through scalar products between the gradients. Then this problem is solve through an SDP solver. We leave the details of reformulating the PEP as an SDP for the appendix.

\paragraph{\textbf{Software tools}} One can solve \eqref{eq:PEP_SDP_no_opt} and \eqref{eq:PEP_SDP} using an SDP solver. Alternatively, problems \eqref{eq:PEP_finite_dimensional_no_opt} and \eqref{eq:PEP_finite_dimensional} can be directly solved using the Matlab toolbox PESTO \cite{PESTO} or the Python toolbox PEPit \cite{PESTO_python}.

    	%% A4 -- Meta theorem on splitting the rate in N*p+ 
    	\clearpage
        \section{Proof of Theorem \ref{thm:meta_thm_one_step_p_zero_step_q} on structure of the rates} \label{proof:thm:meta_thm_one_step_p_zero_step_q}
            %!TEX root = ../../main.tex
% \clearpage
% \subsection{Proof of Theorem \ref{thm:meta_thm_one_step_p_zero_step_q} on structure of the rates} \label{proof:thm:meta_thm_one_step_p_zero_step_q}

% \paragraph{Remark.} Note that both parts of the proof require that all constants $p_i$ (and $q$) have the same sign, a condition which was inadvertently omitted in the Theorem's assumptions.

\begin{proof}[Proof of Theorem \ref{thm:meta_thm_one_step_p_zero_step_q}]

Inequality \eqref{eq:one_step_upper_bound} from the assumptions states
\begin{align*}
    \min\{\|\nabla f(x_i)\|^2 \,,\, \|\nabla f(x_{i+1})\|^2\} {}\leq {}
    \frac{f(x_i)-f(x_{i+1})}{p_i}, \quad \forall\, 0 \leq i \leq N-1.
\end{align*}

\paragraph{Proof of \eqref{eq:meta_thm_only_p_big_rate}.} Let $S:= \sum_{i=0}^{N-1} p_i$. By multiplying each inequality from above with $\frac{p_i}{S}$ and summing everything we get:
\begin{align*}%\label{eq:Proof_meta_sum_1}
    \sum_{i=0}^{N-1} \frac{p_i}{S}\min\{\|\nabla f(x_i)\|^2 \,,\, \|\nabla f(x_{i+1})\|^2\} 
    {}\leq {}
    \sum_{i=0}^{N-1} \frac{p_i}{S}\frac{f(x_i)-f(x_{i+1})}{p_i}
    {} = {}
    \frac{f(x_0)-f(x_{N})}{S}.
\end{align*}
Moreover, the left-hand side is lower bounded by the minimum gradient norm, since
\begin{align*}
\min_{0 \leq i \leq N}\{\|\nabla f(x_i)\|^2\}
{}={} 
\min_{0 \leq i \leq N}\{\|\nabla f(x_i)\|^2\}   \sum_{i=0}^{N-1} \frac{p_i}{S}
{}\leq{}
\sum_{i=0}^{N-1} \frac{p_i}{S}\min\{\|\nabla f(x_i)\|^2 \,,\, \|\nabla f(x_{i+1})\|^2\} 
\end{align*}
(we used the fact that coefficients $\frac{p_i}{S}$ are positive and sum to one).
Therefore, we obtain \eqref{eq:meta_thm_only_p_big_rate}
\begin{align*}
     \min_{0 \leq i \leq N}\{\|\nabla f(x_i)\|^2\}
     {}\leq {}
     \frac{f(x_0)-f(x_{N})}{S}.
\end{align*}

\paragraph{Proof of \eqref{eq:meta_thm_p_and_q_big_rate}.} Let $S_*:= q + \sum_{i=0}^{N-1} p_i$. Multiplying each inequality \eqref{eq:one_step_upper_bound} with $\frac{p_i}{S_*}$ and summing everything we get:
\begin{align*}
    \sum_{i=0}^{N-1} \frac{p_i}{S_*}\min\{\|\nabla f(x_i)\|^2 \,,\, \|\nabla f(x_{i+1})\|^2\} 
    {}\leq {}
    \frac{f(x_0)-f(x_{N})}{S_*}.
\end{align*}
We write inequality \eqref{eq:zero_step_upper_bound} for $x=x_N$  , 
\begin{align*}
    q \|\nabla f(x_N)\|^2 \leq f(x_N) - f_*,
\end{align*}
and multiply by $\frac{1}{S_*}$, then add it to the above inequality to obtain
\begin{align*}
    \frac{q}{S_*} \|\nabla f(x_N)\|^2 +
    \sum_{i=0}^{N-1} \frac{p_i}{S_*}\min\{\|\nabla f(x_i)\|^2 \,,\, \|\nabla f(x_{i+1})\|^2\} 
    {}\leq {}
    \frac{f(x_0)-f_*}{S_*}.
\end{align*}
Since coefficients $\frac{p_i}{S^*}$ and $\frac{q}{S^*}$ are positive and sum to one, the left-hand side is again lower bounded using the minimum gradient norm:
\begin{align*}
    \min_{0 \leq i \leq N}\{\|\nabla f(x_i)\|^2\}
    {}\leq {}
    \frac{q}{S_*} \|\nabla f(x_N)\|^2 +
    \sum_{i=0}^{N-1} \frac{p_i}{S_*}\min\{\|\nabla f(x_i)\|^2 \,,\, \|\nabla f(x_{i+1})\|^2\}.
\end{align*}
Hence we obtain \eqref{eq:meta_thm_p_and_q_big_rate}
\begin{align*}
     \min_{0 \leq i \leq N}\{\|\nabla f(x_i)\|^2\}
     {}\leq {}
     \frac{f(x_0)-f_*}{S_*}.
\end{align*}

\end{proof}
    	
        %% A5 -- proof of the rate
        \clearpage
        \section{Proof of the convergence rate}\label{appendix:proof:wc_rate_hypo}
    		%!TEX root = ../../main.tex
% \clearpage
% \subsection{Proof of the convergence rate}\label{appendix:proof:wc_rate_hypo}

This section proves the central result of the paper, i.e., the convergence rate for smooth hypoconvex functions -- \textbf{Theorem \ref{thm:wc_GM_hypo}}.
\begin{proof}[Proof of Theorem \ref{thm:wc_GM_hypo}]
Using the results of Theorem \ref{thm:meta_thm_one_step_p_zero_step_q}, we see that it is enough to examine any pair of consecutive iterates $x_i$ and $x_{i+1}$, which are linked with $x_{i+1}=x_i - \frac{h_i}{L} g_i$  (recall we define $g_i=\nabla f(x_i)$), and prove that they satisfy inequality \eqref{eq:one_step_upper_bound} for some constant $p_i$.

Moreover, since our analysis will be iteration-independent (i.e.,\@ it will not depend on previous iterations), we will show that the constant $p_i$ only depends on the step size of the current iteration, which means that $p_i = P(h_i)$ for some function $P(\cdot)$. 

Consider without loss of generality the first iteration ($i=0$), and denote $\frac{h}{L}$ the step size (we use $h$ instead of $h_0$ to simplify notations). We want to show that 
\begin{align}\label{eq:one_step_upper_bound_i=0}
        \min\{\|\nabla f(x_0)\|^2 \,,\, \|\nabla f(x_{1})\|^2\} \leq 
        \frac{f(x_0)-f(x_{1})}{P(h)}
\end{align}
holds for some function $P(\cdot)$. %The choice $i=0$ is without loss of generality since it is enough to study the effect of only one step. 
It is well-known that function values decrease strictly at each step of the gradient method when $h_i \in (0,2)$, therefore function $P(\cdot)$ in \eqref{eq:one_step_upper_bound_i=0} will take positive values. 
However this decrease is often proved for functions in $\mathcal{F}_{-L, L}(\mathbb{R}^d)$, i.e., smooth nonconvex functions (see e.g., \cite[Equation (1.2.19)]{Nesterov_cvx_lectures}), which is more restrictive than our class of hypoconvex functions $\mathcal{F}_{\mu, L}(\mathbb{R}^d)$ when $\mu<-L$. Hence, for completeness, we provide a proof below that only requires a bound on the upper curvature $L$.

To obtain the second type of rate involving $f(x_N)-f_*$, we only need to add a proof of inequality \eqref{eq:zero_step_upper_bound} in Theorem \ref{thm:meta_thm_one_step_p_zero_step_q}. It turns out that constant $q=\frac{1}{2L}$  is valid for our class of hypoconvex functions $\mathcal{F}_{\mu, L}(\mathbb{R}^d)$, which again for completeness we prove below.

%It turns hold to hold for any function in $\mathcal{F}_{-\infty,L}$ for smooth functions with and only requires the upper bound . For completeness we prove the result below for functions in (as some of the existing proofs in the literature are written for the more restrictive condition $f \in $\mathcal{F}_{-L, L}(\mathbb{R}^d)$, i.e. smooth nonconvex functions).

Finally, combining inequalities \eqref{eq:one_step_upper_bound} involving $P(\cdot)$ and \eqref{eq:zero_step_upper_bound} involving $q$ in Theorem \ref{thm:interp_hypo_characterization_min} results in the claimed convergence rates \eqref{eq:GM_hypo_rate_just_p} and \eqref{eq:GM_hypo_rate_both_p_and_q} in Theorem \ref{thm:wc_GM_hypo}.

\paragraph{Proof that function values decrease.} 
From the upper curvature assumption we know that function 
$\varphi(x) = \frac{L}{2} \|x\|^2 - f(x)$ is convex (see Definition \ref{def:upper_lower_curvature}).
Using the first-order characterization of convex functions implies that 
\begin{align*}
    \varphi(x_1)
    &\ge 
    \varphi(x_0) + \langle \nabla \varphi(x_0) \,,\, x_0-x_1 \rangle \Leftrightarrow \\
    \frac{L}{2} \|x_1\|^2 - f(x_1)
    &\ge 
    \frac{L}{2} \|x_0\|^2- f(x_0) + \langle L x_0 - \nabla f(x_0) \,,\, x_1-x_0 \rangle
\end{align*}
which after algebraic simplifications is equivalent to 
$$f(x_1) \le f(x_0) + \langle \nabla f(x_0) \,,\, x_1-x_0 \rangle + \frac{L}{2}\| x_1 - x_0 \|^2$$
(often called the descent lemma, or quadratic upper bound). Using $x_1=x_0-\frac{h}{L} g_0$ we find that 
\[ f(x_1) \le f(x_0) - \frac{h}{L} \langle \nabla f(x_0) \,,\, \nabla f(x_0) \rangle + \frac{L}{2} \left\| \frac{h}{L} \nabla f(x_0) \right\|^2 \] 
and, simplifying again, we obtain 
\[ f(x_0)-f(x_1) \ge \frac{h(2-h)}{2L} \| \nabla f(x_0) \|^2 \] 
showing that function values decrease when $h \in (0,2)$ for all functions in $\mathcal{F}_{\mu, L}(\mathbb{R}^d)$.

\paragraph{Proof of inequality \eqref{eq:zero_step_upper_bound} involving $q$.} We want to show that inequality \eqref{eq:zero_step_upper_bound} holds for $q=\frac{1}{2L}$, which says 
\[ f(x) - f_* \geq \frac{1}{2L} \|\nabla f(x)\|^2 , \forall x \in \mathbb{R}^d. \]
Since the decrease property holds for any $x_0$ and $h \in (0,2)$, we can use it for $x_0=x$ and $h=1$, and combine with $f(x_1) \ge f_*$ to obtain \[ f(x) - f_* \ge f(x) - f(x_1) \ge \frac{1}{2L} \| \nabla f(x) \|^2, \] showing the desired inequality for all functions in $\mathcal{F}_{\mu, L}(\mathbb{R}^d)$.

\paragraph{Proof of inequality  \eqref{eq:one_step_upper_bound} involving $P(\cdot)$.} %Starting from the interpolation inequality \eqref{eq:Interp_hypoconvex} we will find $p_0$ such that a linear combination of the gradient norms, i.e.,
%\begin{align*}
%    \sigma_0 \|g_0\|^2 + \sigma_1 \|g_1\|^2,
%\end{align*}
%which is lower bounded the minimum gradient norm, is also upper bounded by $\frac{f_0-f_1}{p_0}$. 

From the interpolation condition \eqref{eq:Interp_hypoconvex} which holds for any hypoconvex function in $\mathcal{F}_{\mu, L}(\mathbb{R}^d)$ we have
\begin{align*}
\begin{aligned}
    f_{0}-f_{1}-\big\langle g_{1}, x_{0}-x_{1}\big \rangle 
    \geq 
    \frac{1}{2\big(1-\kappa \big)}\Big(\frac{1}{L}\big\|g_{0}-g_{1}\big\|^{2} + 
    \kappa L \big\|x_{0}-x_{1}\big\|^2-
    2 \kappa \big\langle g_{1}-g_{0}, x_{1}-x_{0}\big\rangle\Big) \\
    %%%%%%%%%%%%%%%%%%%%%%%%%%%%%%%%%%%%%%%%%%%%%%%%%%%%%%%%%%%%
    f_{1}-f_{0}-\big\langle g_{0}, x_{1}-x_{0}\big \rangle 
    \geq 
    \frac{1}{2\big(1-\kappa\big)}\Big(\frac{1}{L}\big\|g_{1}-g_{0}\big\|^{2} + 
    \kappa L\big\|x_{1}-x_{0}\big\|^2-
    2 \kappa \big\langle g_{0}-g_{1}, x_{0}-x_{1}\big\rangle\Big)
\end{aligned}    
\end{align*}
(recall that we defined $\kappa=\frac{\mu}{L}$).
Using $x_1 = x_0 - \tfrac{h}{L} g_0$ and grouping by inner products, the two inequalities can be rewritten as
\begin{align*} %\label{eq:hypo_interp_(0,1)}
    f_{0}-f_{1}
    &\geq 
    \frac{1}{2L\big(1-\kappa \big)}\big\|g_{0}-g_{1}\big\|^{2} + 
    \frac{\kappa h^2 - 2\kappa h}{2L\big(1-\kappa \big)} \big\|g_{0}\big\|^2 +
    \frac{2 h}{2L\big(1-\kappa \big)} \big\langle g_{1}, g_{0}\big\rangle \\
    %%%%%%%%%%%%%%%%%%%%%%%%%%%%%%%%%%%%%%%%%%%%%%%%%%%%%%%%%%%%
     f_{1}-f_{0}
    &\geq 
    \frac{1}{2L\big(1-\kappa \big)}\big\|g_{0}-g_{1}\big\|^{2} + 
    \frac{\kappa h^2 - 2h}{2L\big(1-\kappa \big)} \big\|g_{0}\big\|^2 +
    \frac{2 \kappa h}{2L\big(1-\kappa \big)} \big\langle g_{1}, g_{0}\big\rangle.% \label{eq:hypo_interp_(1,0)}
\end{align*}
By rewriting the inner product in terms of squares, i.e.,
\begin{align*}
    2 \big\langle g_{1}, g_{0}\big\rangle = \|g_0\|^2 + \|g_1\|^2 - \|g_0 - g_1\|^2,
\end{align*}
we get
% \begin{align} \label{eq:hypo_interp_(0,1)_2}
%     f_{0}-f_{1}
%     &\geq 
%     \frac{1}{2L\big(1-\kappa \big)}\big\|g_{0}-g_{1}\big\|^{2} + 
%     \frac{\kappa h^2 - 2\kappa h}{2L\big(1-\kappa \big)} \big\|g_{0}\big\|^2 +
%     \frac{h}{2L\big(1-\kappa \big)} \Big[\|g_0\|^2 + \|g_1\|^2 - \|g_0 - g_1\|^2\Big]  \\
%     %%%%%%%%%%%%%%%%%%%%%%%%%%%%%%%%%%%%%%%%%%%%%%%%%%%%%%%%%%%%
%      f_{1}-f_{0}
%     &\geq 
%     \frac{1}{2L\big(1-\kappa \big)}\big\|g_{0}-g_{1}\big\|^{2} + 
%     \frac{\kappa h^2 - 2h}{2L\big(1-\kappa \big)} \big\|g_{0}\big\|^2 +
%     \frac{\kappa h}{2L\big(1-\kappa \big)} \Big[\|g_0\|^2 + \|g_1\|^2 - \|g_0 - g_1\|^2\Big] \label{eq:hypo_interp_(1,0)_2}
% \end{align}
% 
\begin{align} \label{eq:hypo_interp_(0,1)_2}
    f_{0}-f_{1}
    &\geq 
    \frac{1-h}{2L\big(1-\kappa \big)}\big\|g_{0}-g_{1}\big\|^{2} + 
    \frac{\kappa h^2 - 2\kappa h + h}{2L\big(1-\kappa \big)} \big\|g_{0}\big\|^2 +
    \frac{h}{2L\big(1-\kappa \big)} \|g_1\|^2 \\
    %%%%%%%%%%%%%%%%%%%%%%%%%%%%%%%%%%%%%%%%%%%%%%%%%%%%%%%%%%%%
     f_{1}-f_{0}
    &\geq 
    \frac{1-\kappa h}{2L\big(1-\kappa \big)}\big\|g_{0}-g_{1}\big\|^{2} + 
    \frac{\kappa h^2 - 2h + \kappa h}{2L\big(1-\kappa \big)} \big\|g_{0}\big\|^2 +
    \frac{\kappa h}{2L\big(1-\kappa \big)} \|g_1\|^2. \label{eq:hypo_interp_(1,0)_2}
\end{align}

We divide the analysis in two cases: (i) $h \in (0,1]$ and (ii) $h\in [1, \bar{h}(\kappa)]$.

\paragraph{Case (i): $h \in (0,1]$.}
From \eqref{eq:hypo_interp_(0,1)_2} we obtain
\begin{align*}
    \frac{\kappa h^2 - 2\kappa h + h}{2L\big(1-\kappa \big)} \big\|g_{0}\big\|^2 +
    \frac{h}{2L\big(1-\kappa \big)} \|g_1\|^2
    {} \leq {}
    f_{0}-f_{1} - 
    \frac{1-h}{2L\big(1-\kappa \big)}\big\|g_{0}-g_{1}\big\|^{2} 
    {} \leq {}
    f_{0}-f_{1},
\end{align*}
since $h\in(0,1]$. The left-hand side can be lower bounded using $\min \{\|g_0\|^2 \,,\, \|g_1\|^2\}$:
\begin{align*}
    \frac{\kappa h^2 - 2\kappa h + 2h}{2L\big(1-\kappa \big)}  \min \{\|g_0\|^2 \,,\, \|g_1\|^2\}
    {} \leq {}
    f_{0}-f_{1},
\end{align*}
hence we have that $P(h)$ is given for all $h \in (0,1]$ by
\begin{align}\label{eq:proof:p_0_h_ll_1}
    P(h) = \frac{1}{2L} \frac{h(\kappa h - 2\kappa + 2)}{1-\kappa} = \frac{1}{2L} \big[ 2h - h^2 \frac{-\kappa}{1-\kappa} \big ].
\end{align}

\paragraph{Case (ii): $h \in [1,\bar{h}(\kappa)] \subseteq [1,2)$.}
We multiply \eqref{eq:hypo_interp_(1,0)_2} by a nonnegative constant $\beta$ and add it to \eqref{eq:hypo_interp_(0,1)_2}:
\begin{align*}
\begin{aligned}
 \big(f_{0}-f_{1}) {}+{} & \beta \big(f_{1}-f_{0}\big)
    {}\geq{}  \\
    \Bigg[
    &\frac{1-h}{2L\big(1-\kappa \big)}\big\|g_{0}-g_{1}\big\|^{2} + 
    \frac{\kappa h^2 - 2\kappa h + h}{2L\big(1-\kappa \big)} \big\|g_{0}\big\|^2 +
    \frac{h}{2L\big(1-\kappa \big)} \|g_1\|^2
    \Bigg] 
    + \\
    %%%%%%%%%%%%%%%%%%%%%%%%%%%%%%%%%%%%%%%%%%%%%%%%%%%%%%%%%%%%
    \beta
    \Bigg[
    &\frac{1-\kappa h}{2L\big(1-\kappa \big)}\big\|g_{0}-g_{1}\big\|^{2} + 
    \frac{\kappa h^2 - 2h + \kappa h}{2L\big(1-\kappa \big)} \big\|g_{0}\big\|^2 +
    \frac{\kappa h}{2L\big(1-\kappa \big)} \|g_1\|^2
    \Bigg].
\end{aligned}
\end{align*}
We group the terms and obtain:
\begin{align*}
\begin{aligned}
\big(1-\beta\big) \big(f_{0}-f_{1}\big)
    & {}\geq{}
    \frac{1}{2L\big(1-\kappa \big)}\Big[(1-h) + \beta (1-\kappa h) \Big] \big\|g_{0}-g_{1}\big\|^{2} \\
    & + \frac{1}{2L(1-\kappa)} \Big[(\kappa h^2 - 2\kappa h + h) + \beta (\kappa h^2 - 2h + \kappa h) \Big]  \big\|g_{0}\big\|^2  \\
        %%%%%%%%%%%%%%%%%%%%%%%%%%%%%%%%%%%%%%%%%%%%%%%%%%%%%%%%%%%%
    &  + 
    \frac{h}{2L\big(1-\kappa \big)} \big[ 1 + \kappa \beta \big] \|g_1\|^2 .
\end{aligned}    
\end{align*}
Observe that norm of the difference of gradients in the first term of the right-hand side vanishes by setting 
\begin{align*}
    \beta = \frac{h-1}{1-\kappa h}.
\end{align*}

%\todo[inline]{Alternative to cover both cases $h \lessgtr 1$: The coefficient multiplying the norm of the difference of gradients is non-negative if we define $\beta = 0$ if $h \leq 1$ and $\beta = \alpha \frac{h-1}{1-\kappa h}$ if $h \geq 1$.}

%\todo[inline]{Alternative to the proof: fix $\alpha=1$ and make a selection only for $\beta$.}

With this choice of $\beta$ (note that $\beta \ge 0$ since $\kappa \le 0$ and $h \ge 1$) we find
\begin{align*}
\begin{aligned}
\frac{2 - h(1+\kappa)}{1-\kappa h} \big(f_{0}-f_{1}\big)
    & {}\geq{}
    \frac{1}{2L(1-\kappa)}
    \frac{h(1-\kappa)[\kappa h^2 - 2h(1+\kappa) + 3]}{1-\kappa h}
     \big\|g_{0}\big\|^2  \\
        %%%%%%%%%%%%%%%%%%%%%%%%%%%%%%%%%%%%%%%%%%%%%%%%%%%%%%%%%%%%
    &  + 
    \frac{h}{2L\big(1-\kappa \big)} \frac{1-\kappa}{1-\kappa h} \|g_1\|^2 .
\end{aligned}    
\end{align*}
Canceling the terms we get
\begin{align*}
\begin{aligned}
\big[2 - h(1+\kappa)\big]\big(f_{0}-f_{1}\big)
    {}\geq{}
    \frac{h}{2L}
    \big[\kappa h^2 - 2h(1+\kappa) + 3\big]
     \big\|g_{0}\big\|^2  + 
    \frac{h}{2L} \|g_1\|^2 
\end{aligned}    
\end{align*}
or, equivalently,
\begin{align}\label{eq:proof_final_h_gg_1}
\begin{aligned}
    \frac{h\big[\kappa h^2 - 2h(1+\kappa) + 3\big]}{2L \big[2 - h(1+\kappa)\big]}
     \big\|g_{0}\big\|^2  + 
    \frac{h}{2L \big[2 - h(1+\kappa)\big]} \|g_1\|^2
        {}\leq{}
    f_{0}-f_{1}
\end{aligned}    
\end{align}
(note that $\big[2 - h(1+\kappa)\big] = 2-h - \kappa h$ is nonnegative since $0<h<2$ and $\kappa \le 0$). The left-hand side can be now lower bounded using $\min \{\|g_0\|^2 \,,\, \|g_1\|^2\}$
\begin{align*}
\begin{aligned}
    \frac{h\big[\kappa h^2 - 2h(1+\kappa) + 4\big]}{2L \big[2 - h(1+\kappa)\big]}
    \min \{\|g_0\|^2 \,,\, \|g_1\|^2\} 
        {}\leq{}
    f_{0}-f_{1},
\end{aligned}    
\end{align*}
where the leading coefficient is the sum of the two coefficients in front of $\|g_0\|^2$ and $\|g_1\|^2$. For this last step to be valid we need each of those two coefficient to be  nonnegative, which requires that 
\[ \kappa h^2 - 2h(1+\kappa) + 3 \ge 0 \]
hence limits how large step size $h$ can be. This condition is equivalent to 
$h  {}\leq{} \bar{h}(\kappa)= \tfrac{3}{1+\kappa+\sqrt{1-\kappa+\kappa^2}}$. In particular, $\bar{h}(\kappa \rightarrow -\infty) = 2$ (i.e., full domain), $\bar{h}(\kappa = -1) = \sqrt{3}$ (i.e., the limit from \cite[Theorem 2]{abbaszadehpeivasti2021GM_smooth}), and $\bar{h}(\kappa = 0) = \tfrac{3}{2}$ (i.e., the coverage for the convex case; see also Proposition \ref{prop:cvx_rate}).

Hence we have proved for any $h \in [1,\bar{h}(\kappa)]$ that
\begin{align}\label{eq:proof:p_0_h_gg_1}
    P(h) = \frac{1}{2L} \frac{h\big[\kappa h^2 - 2h(1+\kappa) + 4\big]}{2 - h(1+\kappa)} = \frac{1}{2L} \frac{h(2-h)(2-\kappa h)}{2 - h(1+\kappa)}.
\end{align}

The bound of Theorem \ref{thm:wc_GM_hypo} is then easily obtained: quantity $p(h,\kappa)$ in \eqref{eq:pi_hi_rate_GM}  comes from \eqref{eq:proof:p_0_h_ll_1} and \eqref{eq:proof:p_0_h_gg_1} after multiplication by $2L$.

\paragraph{Comment.} For our particular setup, i.e., gradient method applied on $f \in \mathcal{F}_{\mu,L}$, quantity $P(h)$ is the solution of the PEP \eqref{eq:PEP_finite_dimensional_no_opt} for one iteration. Solving it gave us strong hints about the rate and served as inspiration for the proof. However, the proof is derived independently and holds without relying on the PEP.

\end{proof}
        
        %% A6 -- particular rates
        \clearpage
        \section{Proofs of tightness}\label{appendix:tightness_proofs_wc_funcs}
            %!TEX root = ../../main.tex
% \clearpage
% \subsection{Proofs of tightness}

\subsection{Tightness for short steps}
% \todo[inline]{Adapt the function and the proof to include both cases with / without optimal point. It is NOT the similar worst-case function since the gradient norm is different!!!!}
% Proof of \textbf{Proposition \ref{prop:tightness_low_gamma}} on tightness about the convergence rates \eqref{eq:GM_hypo_rate_just_p} and \eqref{eq:GM_hypo_rate_both_p_and_q} for step sizes $\tfrac{h}{L}$ with $h \leq 1$.

\begin{proof}[Proof of Proposition \ref{prop:tightness_low_gamma}]\label{proof:prop:tightness_low_gamma}
Given the upper bounds from \eqref{eq:GM_hypo_rate_just_p} and \eqref{eq:GM_hypo_rate_both_p_and_q}, respectively, for the regime with $h_i \leq 1$, we construct for each a worst-case function $f \in \mathcal{F}_{\mu,L}$ for which these bounds are reached, therefore demonstrate their tightness.

%Implicitly, the same function example is valid when no assumption on the optimal point is made, i.e., the rate from \eqref{eq:GM_hypo_rate_just_p} for $h_i \leq 1$ would be tight as well. 

\paragraph{Function example for \eqref{eq:GM_hypo_rate_just_p}.}
Let $\Delta:=f(x_0)-f(x_N)$ and $U$ be the square root of the upper bound, i.e., the minimum gradient norm,
{\normalsize
\begin{align*}
    U := \sqrt{ \frac{2L\ \Delta}{ 
        \sum\limits_{i=0}^{N-1} h_i \big( 2  - h_i \frac{-\kappa}{
        1-\kappa} \big)
        } }.
\end{align*}
}%
For all $i \in \{0,\dots,N\}$, let
{\normalsize
\begin{align}\label{eq:wc_func_iterates_no_opt}
\begin{aligned}
    x_i   {}&={} \frac{U}{L} \sum\limits_{j=i}^{N-1} h_j \\
    g_i   {}&={} U \\
    f_i   {}&={} \Delta - \frac{U^2}{2L} \sum\limits_{j=0}^{i-1} h_j \Big(2 - h_j \frac{-\kappa}{1-\kappa} \Big) 
    % = \Delta \,
    % \frac{\sum\limits_{j=i}^{N-1} h_j \, \big(2 - h_j \frac{-\kappa}{1-\kappa} \big) }
    % {\sum\limits_{j=0}^{N-1} h_j \, \big(2 - h_j \frac{-\kappa}{1-\kappa} \big) }
\end{aligned}
\end{align}
}%
and define the points
{\normalsize
\begin{align*}
    \bar{x}_i := x_i - \frac{-\kappa}{1-\kappa} \frac{h_i}{L} U \in \big[x_{i+1},x_{i}\big], \quad i \in \big\{0,\dots,N-1\big\}.
\end{align*}
}%
Then a worst-case function is the following piecewise quadratic function $f:\mathbb{R}\rightarrow \mathbb{R}$, where $i \in \{0,\dots,N-1\}$:
{\normalsize
\begin{align*}%\label{eq:worst-case_func_example_gamma_lg_1}
    \hspace{-0.05in}
    f(x){}={} 
    \left\{
    \arraycolsep=10pt
    \def\arraystretch{1.8}
    \begin{array}{ll}
        \frac{L}{2} (x+\frac{U}{L})^2 - \frac{U^2}{2L}
        & 
        \text{ if }x \in \left(-\infty, x_N\right] \\
        \frac{\mu}{2} \left(x - x_{i+1} \right)^2 + U \left(x - x_{i+1} \right) + f_{i+1}
        &
        \text{ if }x \in [x_{i+1}, \bar{x}_i]
        \\
        \frac{L}{2} \left(x - x_{i} \right)^2 + U \left(x - x_{i}   \right) + f_{i} & 
        \text{ if }x \in [\bar{x}_i, x_i] \\
        \frac{L}{2} \left(x-x_0\right)^2 + U \left(x - x_{0} \right) + f_{0}
        &
        \text{ if }x \in \left[x_{0}, \infty\right)
    \end{array}
    \right.
\end{align*}
}%
By construction, $f(x_i)=f_i$ and $\nabla f(x_i) = g_i$. The curvature is alternating between $\mu$ and $L$, having the iterates $x_i$ and the points $\bar{x}_i$ as inflection points. By construction, $x_N=0$ and $f_N=0$. One can directly check that $f$ satisfies the interpolation conditions \eqref{eq:Interp_hypoconvex} from Theorem \ref{thm:interp_hypo_characterization_min}. Therefore, the bound from $\eqref{eq:GM_hypo_rate_just_p}$ for $h_i \leq 1$ is exact.

\paragraph{Function example for \eqref{eq:GM_hypo_rate_both_p_and_q}.}
Similarly, let $\Delta:=f(x_0)-f_*$ and $U_*$ be the square root of the upper bound, i.e., the minimum gradient norm,
{\normalsize
\begin{align*}
    U_* = \sqrt{ \frac{2L\ \Delta}{1 + 
        \sum\limits_{i=0}^{N-1} h_i \big(2  - h_i \frac{-\kappa}{
        1-\kappa} \big)
        } }.
\end{align*}
}%
For all $i \in \{0,\dots,N\}$, let
{\normalsize
\begin{align}\label{eq:wc_func_iterates}
\begin{aligned}
    x_i   {}&={} \frac{U_*}{L} + \frac{U_*}{L} \sum\limits_{j=\textbf{\textit{i}}}^{N-1} h_j \\
    g_i   {}&={} U_* \\
    f_i   {}&={} \Delta - \frac{U_*^2}{2L} \sum\limits_{j=0}^{i-1} h_j \, \Big(2 - h_j \frac{-\kappa}{1-\kappa} \Big)
    % = \Delta \,
    % \frac{\sum\limits_{j=i}^{N-1} h_j \, \big(2 - h_j \frac{-\kappa}{1-\kappa} \big) }
    % {1 + \sum\limits_{j=0}^{N-1} h_j \, \big(2 - h_j \frac{-\kappa}{1-\kappa} \big) } 
\end{aligned}
\end{align}
}%
and define the points
{\normalsize
\begin{align*}
    \bar{x}_i := x_i - \frac{-\kappa}{1-\kappa} \frac{h_i}{L} U_* \in \big[x_{i+1},x_{i}\big], \quad i \in \big\{0,\dots,N-1\big\}.
\end{align*}
}%
Then a worst-case function is the following piecewise quadratic function $f:\mathbb{R}\rightarrow \mathbb{R}$, where $i \in \{0,\dots,N-1\}$:
{\normalsize
\begin{align*}%\label{eq:worst-case_func_example_gamma_lg_1}
    \hspace{-0.05in}
    f(x){}={} 
    \left\{
    \arraycolsep=3pt
    \def\arraystretch{1.4}
    \begin{array}{ll}
        \frac{L}{2} x^2
        & 
        x \in \left(-\infty, x_{N}\right] \\
        \frac{\mu}{2} \left(x - x_{i+1} \right)^2 + U_* \left(x - x_{i+1} \right) + f_{i+1}
        &
        x \in [x_{i+1}, \bar{x}_i]
        \\
        \frac{L}{2} \left(x - x_{i} \right)^2 + U_* \left(x - x_{i}   \right) + f_{i} & 
        x \in [\bar{x}_i, x_i] \\
        \frac{L}{2} \left(x-x_0\right)^2 + U_* \left(x - x_{0} \right) + f_{0}
        &
        x \in \left[x_{0}, \infty\right)
    \end{array}
    \right.
\end{align*}
}%
By construction, $f(x_i)=f_i$ and $\nabla f(x_i) = g_i$. The curvature is alternating between $\mu$ and $L$, having the iterates $x_i$ and the points $\bar{x}_i$ as inflection points. The optimal solution is $\big(x_*,f_*\big)=\big(0,0\big)$. One can directly check that $f$ satisfies the interpolation conditions \eqref{eq:Interp_hypoconvex} from Theorem \ref{thm:interp_hypo_characterization_min}, hence the bound from $\eqref{eq:GM_hypo_rate_both_p_and_q}$ for $h_i \leq 1$ is exact.

An illustration of the one-dimensional piecewise quadratic function $f$ is given in Figure \ref{fig:func_example}.
% \begin{figure}
%     \centering
%     \includegraphics[width=.55\textwidth]{func_example.eps}
%     \caption{
%     Example of a worst-case function for $N=3$, $\Delta=2$, $L=2$, $\mu=-4$, $h_0=1$, $h_1=0.5$, $h_2=0.75$.
%     }    \label{fig:func_example_proof_appendix}
% \end{figure}

\end{proof}

% \clearpage

\subsection{Tightness for convex functions}
% \todo[inline]{TR: Divide the function example into two cases: with / without optimal value $f_*$}

Proof of \textbf{Proposition \ref{prop:cvx_rate}} on the tight convergence rates for convex functions when applying a gradient step \eqref{eq:GM_it_fixed_steps} with $h_i \leq \tfrac{3}{2}$:

\begin{proof}[Proof of Proposition \ref{prop:cvx_rate}]\label{proof:prop:cvx_rate}
The \textit{upper bound} results by taking the limit $\kappa = 0$ in Theorem \ref{thm:wc_GM_hypo} and then using the non-increasing property of the gradient norm for convex functions, proved in Appendix \ref{proof:decreasing_gradient}. Because $\kappa := \tfrac{\mu}{L} = 0$, the two upper bounds merge and the tightness of the result covers the range $h\in (0, \tfrac{3}{2}]$. To demonstrate this, worst-case smooth convex functions examples are obtained from the tightness proof 
\ref{proof:prop:tightness_low_gamma} by setting $\kappa=0$.

For the rate with initial condition $f_0-f_N$, an example is
{\normalsize
\begin{align*}%\label{eq:worst-case_func_example_gamma_lg_1}
    \hspace{-0.05in}
    f(x){}={} 
    \left\{
    \arraycolsep=10pt
    \def\arraystretch{1.5}
    \begin{array}{ll}
        \frac{L}{2} (x+\frac{U}{L})^2 - \frac{U^2}{2L}
        & 
        \text{ if }x \in \left(-\infty, x_N\right] \\
        U \left(x - x_{i+1}   \right) + f_{i+1} & 
        \text{ if }x \in [x_{i+1}, x_i] \\
        \frac{L}{2} \left(x-x_0\right)^2 + U \left(x - x_{0} \right) + f_{0}
        &
        \text{ if }x \in \left[x_{0}, \infty\right)
    \end{array}
    \right.
\end{align*}
}%
where $U:= \sqrt{\frac{L\big[f(x_0)-f(x_N)\big]}{ 
        \sum\limits_{i=0}^{N-1} h_i}}$ and $x_i$ and $f_i$ are computed by setting $\kappa=0$ in \eqref{eq:wc_func_iterates_no_opt}.

For the rate with initial condition $f_0-f_*$, an example is
{\normalsize
\begin{align*}
    f(x) = 
    \left\{
    \arraycolsep=3pt
    \def\arraystretch{1.5}
    \begin{array}{ll}
        \frac{L}{2} x^2
        & 
        \ x \in \left(-\infty, x_{N}\right] \\
        U \left(x - x_{i+1} \right) + f_{i+1}
        &
        \ x \in [x_{i+1}, {x}_i] \\
        \frac{L}{2} \left(x-x_0\right)^2 + U \left(x - x_{0} \right) + f_{0}
        &
        \ x \in \left[x_{0}, \infty\right)
    \end{array}
    \right.
\end{align*}
}%
where $U_*:=\sqrt{\frac{L\big[f(x_0)-f(x_*)\big]}{\tfrac{1}{2} + 
        \sum\limits_{i=0}^{N-1} h_i}}$ and $x_i$ and $f_i$ are computed by setting $\kappa=0$ in \eqref{eq:wc_func_iterates}.

Note that the two functions are linear for $x \in [x_N, x_0]$ and extended with quadratics of curvature $L$ outside of this interval.

% When a \textit{constant step-size} $h=h_i$ is used, the gradient norm is non-increasing for smooth convex functions, therefore the minimum corresponds to the last iteration.
% To demonstrate this statement, one can particularize the analysis from \cite[Theorem 1]{davis2016convergence_splitting_schemes} to the gradient method, and obtain that the sequence $\|x_{i}-x_{i+1}\|^2$ is monotonically non-increasing. 

\end{proof}

        %% A7 -- optimal step-size deduction
        \clearpage
        \section{Proofs on the optimal step-size}\label{appendix:proof:gamma_opt}
            %!TEX root = ../../main.tex
% \clearpage
% \subsection{Proofs on the optimal step-size}\label{appendix:proof:gamma_opt}
% \todo[inline]{Change $s_i$ and $w_i$ to $p_i$ everywhere!!!} 
%\paragraph{Remark (Typo).} In the discussion following Proposition \ref{prop:gamma_star}, on lines \textit{252} and \textit{259}, \textbf{instead of} equation number \textbf{(44)} it should have been referenced equation \textbf{\eqref{eq:opt_stepsize_min_known_bound}}.

Proof of \textbf{Proposition \ref{prop:gamma_star}} for step sizes below the threshold $\bar{h}(\kappa)$:
\begin{proof}[Proof of Proposition \ref{prop:gamma_star}]\label{proof:prop:gamma_star}
Minimizing the upper bound from Theorem \ref{thm:wc_GM_hypo} is equivalent with maximizing the general term $p(h,\kappa)$ \eqref{eq:pi_hi_rate_GM}:
{\normalsize
\begin{align*}
    \begin{aligned}%\label{eq:opt_problem_opt_step}
    h_* = \arg \max_{0 < h {} \leq {} \bar{h}(\kappa)} 
        p(h, \kappa).
    \end{aligned}
\end{align*}
}%
We split the analysis according to the intervals $h\in\big(0,1\big]$ and $h \in \big[1, \bar{h}(\kappa)$\big]:
{\normalsize
\begin{align*}
    &c_1 := \max_{h\in(0,1]} \,\,\,\, 2 h - h^2 \frac{-\kappa}{1-\kappa} = 2 - \frac{-\kappa}{1-\kappa} \\
    &c_2 := \max_{h\in[1,\bar{h}(\kappa)]}  2 h - h^3 \frac{-\kappa}{2-(1+\kappa)h} \geq 2 - \frac{\kappa}{2-(1+\kappa)} = c_1.
\end{align*}
}%
Hence, $c_2 \geq c_1$ and $h_*$ belongs to $\big[1, \bar{h}(\kappa)\big]$. We denote by $c(h)$ the objective function from 
{\normalsize
\begin{align}\label{eq:proof_optimal_step_size}
    \begin{aligned}
    h_* = \arg \max_{1 {} \leq {} h {} \leq {} \bar{h}(\kappa)} 
        2 h + \frac{\kappa h^3}{2-h\left(1+\kappa\right)}
    \end{aligned}.
\end{align}
}%
One can check that $c(h)$ is strictly concave on $[1,2)$ and its first derivative is
{\normalsize
    \begin{align*}
    \begin{aligned}
    c'(h) = -\kappa\left(1+\kappa\right) h^3 +
    \left[3\kappa + \left(1+\kappa\right)^2\right] h^2
    -4\left(1+\kappa\right) h +
    4
    \end{aligned}.
    \end{align*}
}%
By solving the optimization problem \eqref{eq:proof_optimal_step_size} we get the optimal step-size expression \eqref{eq:opt_stepsize_min_known_bound}.
\end{proof}

% \todo[inline]{Check the numbering}
Proof of the optimal step size from \textbf{Proposition \ref{prop:Conjectured_optimal_step}}, based on Conjecture \ref{conjecture:3rd_regime} about the third regime:
\begin{proof}[Proof of Proposition \ref{prop:Conjectured_optimal_step}] \label{proof:prop:Conjectured_optimal_step}
Similar to the proof in Proposition \ref{prop:gamma_star}, we have to solve the maximization problem
{\normalsize
\begin{align*}%\label{eq:optimal_step_size_conjecture}
    \begin{aligned}
    h_* = \arg \max_{1 {} \leq {} h < 2} 
        2 h + \frac{\kappa h^3}{2-h\left(1+\kappa\right)}
    \end{aligned},
\end{align*}
}%
but in this case on the full domain of step sizes. The objective function $c(h)=2 h + \tfrac{\kappa h^3}{2-h\left(1+\kappa\right)}$ is strictly concave on $[1,2)$ and its first derivative is
{\normalsize
    \begin{align*}
    \begin{aligned}
     c'(h) = -\kappa\left(1+\kappa\right) h^3 +
    \left[3\kappa + \left(1+\kappa\right)^2\right] h^2
    -4\left(1+\kappa\right) h +
    4
    \end{aligned}
    \end{align*}
}%
Because $c'(1)=1>0$ and $c'(2)=4\kappa(1-\kappa)< 0$, there exists an unique solution $h_* \in [1,2)$ of $c'(h)=0$.
\end{proof}

    \end{appendix}
    
\end{document}